\pgfplotsset{compat=1.13}
\theoremstyle{plain}
\newtheorem{thrm}{Theorem}[section]
\newtheorem{lmm}[thrm]{Lemma}
\newtheorem{crllr}[thrm]{Corollary}
\newtheorem{prpstn}[thrm]{Proposition}
\theoremstyle{definition}
\newtheorem{dfntn}[thrm]{Definition}
\newtheorem{rmrk}[thrm]{Remark}
\theoremstyle{plain}
\newcommand{\Id}{\mathds{1}}
\newcommand{\R}{\mathbb{R}}
\newcommand{\N}{\mathbb{N}}
\renewcommand{\d}{\,\mathrm{d}}
\newcommand{\Dual}{\Borelsets}
\newcommand{\Leb}{\mathcal{L}}
\newcommand{\Count}{\mathcal{N}}
\newcommand{\Lagrangian}{\mathscr{L}}
\newcommand{\Wass}{\mathcal{W}}
\newcommand{\Prob}{\mathcal{P}}
\newcommand{\supp}{{\text{supp}\,}}
\newcommand{\dist}{{\text{dist}\,}}
\newcommand{\domain}{D}
\newcommand{\diam}{\text{diam}}
\newcommand{\powerDiagC}{\mathcal{F}}
\newcommand{\Func}{\mathcal{F}}
\newcommand{\KL}{\text{KL}}
\newcommand{\Lip}{\text{Lip}}
\newcommand{\discreteMeasure}{\mathbf{m}}
\newcommand{\g}{\text{g}}
\newcommand{\y}{\mathbf{y}}
\newcommand{\powerDiagD}{\mathbf{F}}
\renewcommand{\d}{\,\mathrm{d}}
\newcommand{\Borelsets}{\mathcal{D}}
\newcommand{\LC}{\boldsymbol{L}}
\newcommand{\CLC}{\boldsymbol{L}^c}
\newcommand{\SLC}{\Lagrangian}
\newcommand{\SCLC}{\Lagrangian^c}
\newcommand{\Laguerre}{L}
\newcommand{\sitesSet}{\mathcal{X}_n}
\newcommand{\gVector}{\mathbf{g}}
\newcommand{\sitesVector}{\mathbf{X}}
\newcommand{\powerDiag}{\mathbf{L}}
\definecolor{blue}{rgb}{0.137255,0,0.862745}
\definecolor{red}{rgb}{0.862745,0.137255,0}
\definecolor{myOrange}{RGB}{255, 169, 87}
\definecolor{myGreen}{RGB}{180, 255, 162}
\definecolor{myGrey}{RGB}{187, 187, 187}
\definecolor{myDarkGrey}{RGB}{100, ,100, 100}
\DeclareRobustCommand\onedot{\futurelet\@let@token\@onedot}
\def\@onedot{\ifx\@let@token.\else.\null\fi\xspace}
\newcommand{\notinclude}[1]{}
\newcommand{\JJ}[1]{{\color{red} [JJ:{#1}] }}
\numberwithin{equation}{section}
\begin{document}
\title{Entropy-Regularized Optimal Transport in Information Design}
\author[1]{Jorge Justiniano}
\author[2]{Andreas Kleiner}
\author[2]{Benny Moldovanu}
\author[1]{Martin Rumpf}
\author[3]{Philipp Strack}

\affil[1]{Institute for Numerical Simulation, University of Bonn, Endenicher Allee 60, 53115 Bonn, Germany \authorcr
  \tt jorge.justiniano@outlook.de, martin.rumpf@ins.uni-bonn.de}
\affil[2]{Institute for Microeconomics, University of Bonn, Adenauerallee 24-42, 53113 Bonn, Germany \authorcr
  \tt andreas.kleiner@uni-bonn.de, mold@uni-bonn.de}
\affil[3]{Department of Economics, Yale University, New Haven CT 06520-8268\authorcr
  \tt philipp.strack@yale.edu}

\date{\today}

\maketitle
\begin{abstract} 

In this paper, we explore a scenario where a sender provides an information policy and a receiver, upon observing a realization of this policy, decides whether to take a particular action, such as making a purchase. The sender's objective is to maximize her utility derived from the receiver's action, and she achieves this by careful selection of the information policy. Building on the work of Kleiner et al., our focus lies specifically on information policies that are associated with power diagram partitions of the underlying domain. To address this problem, we employ entropy-regularized optimal transport, which enables us to develop an efficient algorithm for finding the optimal solution. We present experimental numerical results that highlight the qualitative properties of the optimal configurations, providing valuable insights into their structure. 
Furthermore, we extend our numerical investigation to derive optimal information policies for monopolists dealing with multiple products, where the sender discloses information about product qualities.

\end{abstract}

\section{Introduction}\label{sec:intro}
Mechanism Design is the branch of Game Theory/Economics that analyzes the
design of optimal institutions (or so called \textit{game-forms}) governing
the interaction of a group composed of self-interested, strategic and
privately informed agents. An optimal mechanism needs to solve an
information aggregation problem and an incentive problem in order to achieve
a desired collective decision (cf. the introductory textbook \cite{Borgers15}). 
Major applications have been made to social
choice and voting, to market design e.g., auctions and matching, and to
contracting. Numerous Nobel Prizes have been awarded for both theoretical
and applied work in this field, e.g. for Myerson's work on optimal auctions \cite{Myerson81}.

In classical mechanism design analysis, the private information available to
the agents about the environment is exogenously given. A more recent branch
of inquiry, called \textit{Information Design}, takes the institution that
governs the agents' interaction as given, but endogenizes the information
structure: in turn, the latter is chosen in order to optimize some given
goal.

A large literature within information design has focused on the \textit{%
Bayesian Persuasion }problem. A particularly important class is that of moment Bayesian persuasion problems 
which have been studied by Kamenica and Gentzkow \cite{Kamenica}, Kolotilin~\cite{Kolotilin}
and Dworczak and Martini~\cite{dworczak}.
With few exceptions, the literature on these problems 
assumes that the setting is unidimensional.
In this paper we deal with the multidimensional case and propose an efficient algorithm to compute 
optimal information designs. 
\paragraph{Organization. }
This paper is organized as follows. 
In Section~\ref{sec:InformationDesign} the moment Bayesian persuasion model will be introduced. 
Using recent insights in the model it will be sufficient to optimize over the closure of Lipschitz-exposed points of
a subset of measures induced by information policies. 
A particularly important class of these measures turns out to be represented by Laguerre partitions.
Then,  in Section~\ref{sec:Opti} the optimization problem of moment Bayesian persuasion is 
transformed into an optimization over such Laguerre partitions
along with a relaxation strategy that guarantees the existence of optimal designs. 
To compute these optimal Laguerre partitions we recall in Section~\ref{sec:SD}  
the role of Laguerre diagrams in the context of semi-discrete optimal transport, and derive in Section~\ref{sec:entropy}
the associated entropy relaxed optimal transport formulation, which allows for a computationally efficient algorithm to optimize over Laguerre partions and thus solve the moment Bayesian persuasion problem numerically. In this context, we demonstrate the convergence of maximizers of the relaxed problems to a maximizer of the non-relaxed problem. 
To ensure computational reproducibility, the spacial discretization is presented in full detail in Section~\ref{sec:space}. 
Then, in Section~\ref{sec:numerics} we show some qualitative features of the algorithm, and finally in Section~\ref{sec:application}
we use our algorithm to compute optimal information policies for a multi-product monopolist.

\section{Information Design and the Moment Bayesian Persuasion Problem}\label{sec:InformationDesign}
In this section we will rigorously formulate the moment Bayesian persuasion problem.  
Let $(D,\Borelsets,\nu)$ be a probability space, where $D\subseteq \R^d$ is compact and convex, $\Borelsets$ is its Borel $\sigma-$algebra, and $\nu$ is a probability measure which is assumed to be absolutely continuous with respect to the Lebesgue measure $\Leb$.
For later usage, we assume throughout this paper that the Radon-Nikodym density $\frac{\d\nu}{\d \Leb}\in L^\infty(\Leb)$.  The \emph{state of the world} $\omega$ is a realization of a random variable that is distributed according to the \textit{prior} $\nu$.
An informed \emph{sender} wants to persuade an uninformed \emph{receiver} to take an action
that the sender prefers. 
The receiver's optimal action depends on her information
about the state, and initially she only knows that the state of the world is distributed according to $\nu$. The sender, who knows $\nu$ and
observes the realization of the state of the world $\omega$, may
reveal information to the receiver about the realized state. This revelation
may often be strategic if the goals of
sender and receiver are different. For example, if the state is represented
by a one-dimensional random variable, the sender could reveal to the
receiver if the realized state is above or below a certain threshold but provide no additional information --- this
may be better for the sender than revealing all information.
 
Specifically, the sender commits to an \textit{information policy} $(S,\pi)$, where $S$ is a measurable space, $\Prob(S)$ is the space of probability distributions on $S$, and $\pi:D\rightarrow \Prob(S)$. 
If the sender commits to information policy $(S,\pi)$ and if the realized state of the world is $\omega$, the receiver observes the information policy and the realization of a random variable with values in $S$ that is drawn according to the probability distribution $\pi(\omega)$.
The prior $\nu$ and the information policy $\pi$ induce a joint probability distribution $\gamma$ on $D\times S$ defined by $\gamma(E\times T)\coloneqq \int_E \pi(\omega)(T) \,\mathrm d\nu(\omega)$ for any measurable sets $E\subseteq D$, $T\subseteq S$.
After seeing a realization $s\in S$, the receiver updates her beliefs about the state of the world according to Bayes' rule to a \emph{posterior belief} given by $\gamma(\cdot\times\{s\}|D\times\{s\})\in\Prob(D)$, which is a regular conditional probability measure of $\gamma$. This posterior belief determines the receiver's optimal behaviour and implicitly the sender's payoff. The expected value of the posterior belief is called the $\emph{posterior mean}$; therefore, each information policy induces a probability distribution over posterior means.

As an example, consider the fully revealing information policy given by $S=(D,\Borelsets)$ and $\pi(\omega)=\delta_{\omega}$, where $\delta_{\omega}$ denotes the Dirac measure at $\omega$. Under the fully revealing information policy, the random variable observed by the receiver equals the realized state of the world with probability 1 and hence the receiver perfectly learns the state of the world. For an example of a partially informative information policy, let $\{B_1, \ldots, B_n\}$ be a partition of $D$, $S = \left(\{1, \ldots, n\}, 2^{\{1, \ldots, n\}}\right)$, and $\pi(\omega) = \delta_i$ if $\omega\in B_i$. Under this information policy, the receiver learns in which partition element the realized state lies, and updates her prior belief accordingly.

In the moment Bayesian persuasion model 
it is assumed that the sender's payoff from inducing a posterior belief depends only on the posterior mean, and is given by an upper semicontinuous function $\Phi:D\rightarrow \mathbb{R}$. Note that, in reduced form, this formulation allows for the sender's payoff to depend on an action taken by the receiver.
To characterize the distributions of posterior means 
that can be induced by some information policy we use the following stochastic dominance concept.

\begin{dfntn}
[Shaked and Shanthikumar \cite{shaked}, Cartier et al. \cite{Cartier}
and Phelps \cite{Phelps}] For measures $\nu $ and $\rho $, we say that $\nu $ 
\emph{dominates }$\rho$\emph{\ in the convex order} (or that $\rho \ $is a 
\emph{fusion} of $\nu ),\ $denoted by $\nu \succeq \rho $, if $\int \psi \,%
\mathrm{d}\nu \geq \int \psi \,\mathrm{d}\rho $ for all convex functions $%
\psi $ for which both integrals exist. We write $\nu \succ \rho $ if $\nu $
dominates $\rho $ in the convex order and $\nu \neq \rho .$\ We denote by $%
F_{\nu }=\{\rho :$ $\rho \preceq \nu \}$ the set of fusions of a given
measure $\nu .$
\end{dfntn}

Any information policy necessarily reveals weakly less information than the state of the world and the generated distribution of posterior means is dominated in the convex order by the prior. Conversely, for any probability measure $\rho \preceq \nu$ there exists an information policy that generates $\rho$ as its probability measure of posterior means (see Blackwell \cite{blackwell} and Strassen \cite{Strassen}). 

Therefore, instead of modeling the sender's choice of an information policy we can assume that the sender directly chooses a probability measure that is dominated in the convex order by $\nu$. 
The sender chooses such a probability measure to maximize her expected payoff, and therefore solves the problem 
\begin{align}\label{eq:optimize}
\max_{\lambda \in F_{\nu }}\int_D \Phi(y)\,\mathrm{d}\lambda (y).
\end{align}
This is the information design problem in the focus of this paper.

To deduce qualitative properties of solutions, let us recall some basic concepts of convex analysis.
An \textit{extreme} point of a convex set $A$ is a point $y\in A$ that
cannot be represented as a convex combination of two other points in $A$.
I.e. $y\in A$ is an extreme point of $A$ if $y=\alpha w+(1-\alpha )z,$
for $w,z\in A$ and $\alpha \in \lbrack 0,1]$ jointly imply that $y=w$ or $y=z$.

The \textit{Krein--Milman Theorem} states that any convex and compact
set $A$ in a locally convex space is the closed, convex hull of its extreme
points. In particular, such a set has extreme points. The usefulness of
extreme points for optimization stems from \textit{Bauer's Maximum Principle},
which states that a convex, upper semicontinuous functional on a non-empty, compact and
convex set $A$ of a locally convex space attains its maximum at an extreme
point of $A.$ 
An element $y$ of a convex set $A$ is called
\textit{exposed} if there exists a supporting hyperplane $H$ such
that $H\cap A=\{y\}$ or, equivalently, there is a continuous, linear functional that attains
its unique maximum on $A$ at $y$.
Every exposed point is extreme, but the converse is not true in general.

The set of fusions $F_{\nu }$ appearing in the above maximization problem is
convex and compact in the weak$^*$ topology of measures. As the objective is linear in the
measure $\lambda$, a maximum is attained at one of the extreme points of $%
F_{\nu }$. It is thus of interest to further explore the structure of
extreme and exposed points, and we focus here on those measures that have a
finite support.

\medskip

For any measure $\rho$ on $D$ and a measurable set $B\subseteq D$, we denote by 
$\rho\vert_B$ the restriction of $\rho$ to $B$.
As stated in the following theorem a key feature of any extremal measure in $F_{\nu }$ with finite support is that there is a partition 
of the domain $D$ into convex sets $B$ such that all the
original mass restricted to $B,\ \nu(B),$ remains within $B$ and is fused
into a measure $\rho |_{B}$ whose support is an affinely independent set of
points.

\begin{thrm}[Kleiner et al. \cite{Kleiner2}]
\label{prop:necessity_extreme} Let $%
D\subseteq \mathbb{R}^{n}$ be compact and convex, and let $\nu $ be an
absolutely continuous probability measure on $D$. Suppose that $\rho $ is an
extreme point of $F_{\nu }$ with finite support. Then there exists a finite
partition $\mathcal{P}$ of $D$ into convex sets such that, for each $B\in 
\mathcal{P}$, $\rho |_{B}\preceq \nu |_{B}$ and $\rho |_{B}$ has affinely
independent support.
\end{thrm}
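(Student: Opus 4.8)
The plan is to convert the abstract extremality of $\rho$ into a rigidity property of a \emph{dilation} witnessing $\rho\preceq\nu$, and then to read the partition off the geometry of that dilation. Since $\rho\preceq\nu$, the Strassen/Cartier representation of the convex order (already invoked above for the equivalence between fusions and information policies) yields a decomposition $\nu=\sum_{i=1}^{k}\nu_i$ with each $\nu_i\ge 0$, where $\rho=\sum_{i=1}^{k}\rho_i\delta_{y_i}$ has distinct atoms $y_i$ and weights $\rho_i=\nu_i(D)>0$, and where each piece obeys the barycenter identity $\int_D x\,\d\nu_i(x)=\rho_i\,y_i$; intuitively $\nu_i$ is the prior mass fused into $y_i$. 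The whole argument then revolves around perturbing this decomposition.

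First I would characterise extremality combinatorially. If $\rho=\tfrac12(\rho^++\rho^-)$ with $\rho^\pm\in F_\nu$, then $\rho^++\rho^-=2\rho$ forces $\supp\rho^\pm\subseteq\{y_1,\dots,y_k\}$, so only the weights can change. Writing $\nu_i^\pm=\nu_i\pm\tau_i$ for signed measures $\tau_i$, the competitor $\rho^\pm=\sum_i(\rho_i\pm\tau_i(D))\delta_{y_i}$ lies in $F_\nu$ as soon as $\sum_i\tau_i=0$ (the marginal $\nu$ is preserved), $\nu_i\pm\tau_i\ge0$ (positivity), and $\int_D x\,\d\tau_i=\tau_i(D)\,y_i$ (each piece keeps its barycenter). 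Hence, if some admissible $\tau$ produces a nonzero weight change then $\rho$ is not extreme, and extremality rules such $\tau$ out. Summing the identities $\int_D x\,\d\tau_i=s_i\,y_i$ over $i$ with $s_i:=\tau_i(D)$ and using $\sum_i\tau_i=0$ gives $\sum_i s_i=0$ and $\sum_i s_i\,y_i=0$; that is, any admissible weight change is an \emph{affine dependence} of the atoms it touches.

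Next I would build the partition from an \emph{overlap graph}: join $y_i$ and $y_j$ whenever a positive-$\nu$-measure set of source points is fused into both, and let the cells be the regions carrying each connected component. The crux is that within one component the atoms must be affinely independent. Indeed, connectivity lets one route a signed perturbation through the shared source mass, and here the standing hypothesis $\nu\ll\Leb$ is essential: the absence of $\nu$-atoms supplies a continuum of reassignable source mass with which to realise any prescribed affine dependence $(c_i)$ as an actual $\tau$ carrying the correct per-atom barycenters. Were a component's atoms affinely dependent, this would produce a weight-changing perturbation, contradicting extremality; hence each component, and so the support of each $\rho|_B$, is affinely independent. Affine independence then yields convexity almost for free: every source point fused into a component lies, with nonnegative barycentric weights summing to one, in the simplex spanned by that component's atoms, so its carrier sits inside a convex simplex, while the mass balance $\nu|_B=\sum_{y_i\in B}\nu_i$ gives $\rho|_B\preceq\nu|_B$ with equal total mass.

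The main obstacle is to upgrade this almost-everywhere simplicial picture into a genuine convex partition of all of $D$. The simplices attached to distinct components should meet only in $\nu$-null sets --- otherwise a source region shared by two components would link them, contradicting their separation --- so they tile $\nu$-almost all of $D$; but assembling them, together with the $\nu$-null leftover where the density vanishes, into convex sets that \emph{exactly} partition $D$ and simultaneously respect both the affine-independence and the domination conditions is the delicate geometric step. I expect this to require a careful, globally consistent redistribution of mass on overlaps, which by the $s_i=0$ analysis never alters $\rho$, so that a single family of convex cells delivers all the conclusions at once.
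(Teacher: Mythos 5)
This theorem is quoted in the paper from Kleiner et al.\ \cite{Kleiner2} without proof, so your proposal can only be assessed on its own merits. Parts of it are sound: the Strassen/Cartier decomposition, the observation that any competitor in a convex splitting of $\rho$ must be supported on $\supp\rho$, and the computation showing that admissible perturbations $(\tau_i)$ can change the weights only along affine dependences of the atoms. Your converse claim --- that inside a connected component of the overlap graph every affine dependence can be realized by an admissible $\tau$ --- is also correct in substance, but you do not prove it; it needs two ingredients you only gesture at: (a) a moment lemma, namely that on any set of positive $\nu$-measure absolute continuity makes the image of $g\mapsto\bigl(\int g\,\mathrm{d}\nu,\int x\,g\,\mathrm{d}\nu\bigr)$ over $|g|\le c$ a full-dimensional symmetric convex neighborhood of $0$ in $\mathbb{R}^{1+d}$ (a nonzero affine function cannot vanish a.e.\ on a set of positive Lebesgue measure), and (b) a flow argument on a spanning tree of the component, using that an affine dependence is exactly a zero-sum divergence for the pair (mass, first moment).

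The genuine gap is the passage from overlap components to the convex partition, and two things go wrong there. First, the assertion that ``every source point fused into a component lies \ldots in the simplex spanned by that component's atoms'' is false: the barycenter identity constrains the \emph{average} of $\nu_i$, not its support. For the extreme point $\rho=\delta_{b}$, $b$ the barycenter of $\nu$, the single component's carrier is all of $D$, not $\{b\}$; so convexity is not ``almost free''. Second, and more fundamentally, the cells cannot be taken to be the carriers of the components of a given decomposition. Take $D=[0,1]$, $\nu=\Leb$, $\nu_1=\nu|_{[0,1/4]\cup[1/2,3/4]}$, $\nu_2=\nu|_{[1/4,1/2]\cup[3/4,1]}$, hence $\rho=\tfrac12\delta_{3/8}+\tfrac12\delta_{5/8}$. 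This $\rho$ is extreme: any competitor is supported on the two atoms, is a probability measure, and has mean $\tfrac12$, which pins both weights to $\tfrac12$. With this (perfectly valid) decomposition your overlap graph has two components with mutually singular, non-convex carriers, yet \emph{no} convex partition separating the two atoms can satisfy the conclusion: $\rho|_{B_1}=\tfrac12\delta_{3/8}\preceq\nu|_{B_1}$ forces $B_1$ to be the interval of mass $\tfrac12$ with barycenter $3/8$, i.e.\ $B_1=[1/8,5/8]$, whose complement is not convex. The partition the theorem asserts is the trivial one $\{D\}$, which \emph{merges} your two components. So the correct cells are in general strictly coarser than the components of an arbitrarily chosen Strassen decomposition; they must group atoms whose fused pieces are mutually singular, and affine independence must then be proved for these merged groups --- exactly where your perturbation argument, which needs shared mass to route the flow, yields nothing. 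Either one must show that a suitably chosen (maximally overlapping) decomposition exists, or argue directly for the coarser partition; this merging/convexification step, which you yourself defer in your last paragraph, is the heart of the theorem and is missing.
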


The above result generalizes a unidimensional result found in Kleiner et al. \cite{Kleiner1} and 
Arieli et al. \cite{arieli}. In fact, more precise information is
available about the geometric structure of the following subset of extreme points.

\begin{dfntn}
A measure $\rho \in F_{\nu }$ is a \emph{Lipschitz-exposed} point 
of $F_{\nu }$ if there exists a Lipschitz-continuous function $%
\Phi\colon D\rightarrow \mathbb{R}$ such that $\rho $ is the unique solution to
the problem 
\begin{align*}
\max_{\lambda \in F_{\nu }}\int \Phi(y)\,\mathrm{d}\lambda (y).
\end{align*}
\end{dfntn}

The following theorem provides a characterization of Lipschitz-exposed points with finite support
using Laguerre diagrams, also known as \emph{power diagrams}, see Aurenhammer \cite{Au87}.
Given an $n$-tupel $\sitesVector=(x_1,\ldots,x_n)$ of pairwise distinct sites $x_i\in \R^d$
and a weight vector $\gVector =(\g_1, \ldots, \g_n) \in \R^n$, the associated Laguerre cells $\Laguerre_i[\sitesVector,\gVector]\in\Borelsets$ are the convex polyhedra 
\begin{align}\label{eq:LaguerreCell}
\Laguerre_i[\sitesVector,\gVector]\coloneqq\lbrace y\in\domain : \vert y-x_i\vert^2-\g_i\leq \vert y-x_j\vert^2-\g_j \ \ \forall \ \ 1\leq j\leq n\rbrace,
\end{align}
for $i=1,\ldots,n$. The ensemble of all Laguerre cells forms the Laguerre diagram. 

\begin{thrm}[Kleiner et al. \cite{Kleiner2}]
\label{prop:characterization_strongly_exposed} Let $D\subseteq \mathbb{R}%
^{n} $ be compact and convex, and let $\nu $ be a probability measure with
full support on $D$ that is absolutely continuous with respect to the
Lebesgue measure.
Let $\rho \in F_{\nu }$ have finite support. Then $\rho $ is a
Lipschitz-exposed point of $F_{\nu }$ if and only if 
there exists a Laguerre
diagram $\mathcal{P}$ of $D$ such that, for all probability measures $\lambda$, if  $\lambda |_{\Laguerre}\preceq \nu |_{\Laguerre}$ for
all $\Laguerre\in \mathcal{P}$ and $\supp(\lambda )\subseteq \supp(\rho )$ then $\lambda =\rho $.
\end{thrm}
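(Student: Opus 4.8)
My plan is to exploit the elementary but crucial correspondence between convex piecewise-affine functions and Laguerre diagrams. If $\psi(y)=\max_{1\le i\le n}\bigl(\langle a_i,y\rangle+b_i\bigr)$ is the upper envelope of finitely many affine functions, then the region where the $i$-th piece is active, $\{y:\langle a_i,y\rangle+b_i\ge\langle a_j,y\rangle+b_j\ \forall j\}$, is exactly the Laguerre cell $\Laguerre_i[\sitesVector,\gVector]$ of \eqref{eq:LaguerreCell} with site $x_i=\tfrac12 a_i$ and weight $\g_i=b_i+|x_i|^2$; conversely every Laguerre diagram arises this way. Thus a Laguerre diagram is the same datum as a convex piecewise-affine $\psi$ together with its maximal affine regions, the sites being the halved gradients of $\psi$. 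Alongside this I will use the duality for the persuasion problem: for convex $\psi\ge\Phi$ one has $\int\Phi\,\d\lambda\le\int\psi\,\d\lambda\le\int\psi\,\d\nu$ whenever $\lambda\preceq\nu$, together with the dilation characterisation of $\preceq$ (Strassen, Cartier et al.), namely $\lambda\preceq\nu$ iff $\nu=\lambda Q$ for a Markov kernel with $Q(y,\cdot)$ of barycenter $y$; equality $\int\psi\,\d\lambda=\int\psi\,\d\nu$ then holds iff each $Q(y,\cdot)$ is supported where $\psi$ is affine through $y$.

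For the direction $(\Leftarrow)$, assume the Laguerre diagram $\mathcal P$ with the stated rigidity, chosen (consistently with Theorem~\ref{prop:necessity_extreme}) so that $\rho|_L\preceq\nu|_L$ on each cell $L$. Let $\psi$ be the associated convex piecewise-affine envelope and set $\Phi:=\psi-\dist(\cdot,\supp\rho)$, which is Lipschitz on the compact set $D$, satisfies $\Phi\le\psi$, and attains equality exactly on $\supp\rho$. Since $\psi$ is affine on each cell and $\rho|_L,\nu|_L$ share mass and barycenter, $\int\psi\,\d\rho=\int\psi\,\d\nu$, while $\int\Phi\,\d\rho=\int\psi\,\d\rho$; hence $\rho$ attains the dual upper bound and maximises $\int\Phi\,\d\lambda$ over $F_\nu$. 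For any maximiser $\lambda$, equality forces first $\supp\lambda\subseteq\{\Phi=\psi\}=\supp\rho$ and second $\int\psi\,\d\lambda=\int\psi\,\d\nu$; by the dilation criterion the spreading kernel from $\lambda$ to $\nu$ cannot cross the kinks of $\psi$, so it stays inside cells and yields $\lambda|_L\preceq\nu|_L$ for every $L$. The hypothesis on $\mathcal P$ now gives $\lambda=\rho$, so $\rho$ is the unique maximiser and therefore Lipschitz-exposed.

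For $(\Rightarrow)$, let $\Phi$ be a Lipschitz function exposing $\rho$. As an exposed point $\rho$ is extreme, so Theorem~\ref{prop:necessity_extreme} furnishes a finite partition of $D$ into convex sets $B$ with $\rho|_B\preceq\nu|_B$ and $\supp(\rho)\cap B$ affinely independent. The core step is to produce the optimal dual, a convex $\psi^*\ge\Phi$ minimising $\int\psi\,\d\nu$, and to show it is piecewise affine with $\psi^*=\Phi$ on $\supp\rho$: complementary slackness gives the touching on $\supp\rho$, and on each $B$ the equality $\int_B\psi^*\,\d\nu=\int_B\psi^*\,\d\rho$ together with convexity forces $\psi^*$ to be affine on $\operatorname{conv}(\supp(\nu|_B))=B$, since any strict convexity on a set of positive $\nu$-mass would push $\int_B\psi^*\,\d\nu$ strictly above the affine-interpolation value $\int_B\psi^*\,\d\rho$. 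Merging the sets $B$ on which $\nabla\psi^*$ agrees, the maximal affine regions of $\psi^*$ define, through the correspondence above, a Laguerre diagram $\mathcal P$ with sites $\tfrac12\nabla\psi^*$, and additivity of the convex order over the partition preserves $\rho|_L\preceq\nu|_L$ on the merged cells. Finally the rigidity property holds: if a probability measure $\lambda$ satisfies $\lambda|_L\preceq\nu|_L$ on all cells and $\supp\lambda\subseteq\supp\rho$, then summing the per-cell inequalities against any convex test function gives $\lambda\preceq\nu$, so $\lambda\in F_\nu$; affinity of $\psi^*$ per cell and the matching of masses and barycenters yield $\int\psi^*\,\d\lambda=\int\psi^*\,\d\nu$, while $\supp\lambda\subseteq\supp\rho\subseteq\{\Phi=\psi^*\}$ gives $\int\Phi\,\d\lambda=\int\psi^*\,\d\lambda$; hence $\lambda$ attains the maximal value and, by uniqueness of the exposed maximiser, $\lambda=\rho$.

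The main obstacle is the core step of the forward direction: upgrading the abstract convex partition of Theorem~\ref{prop:necessity_extreme} to a genuine power diagram by showing that the optimal dual potential $\psi^*$ exists and is piecewise affine, with its linearity cells coinciding with the fusion regions of $\rho$. This demands both the existence and regularity of $\psi^*$, which solves an infinite-dimensional linear program over convex majorants of a merely Lipschitz $\Phi$, and the rigidity that finite support of $\rho$ forces only finitely many affine pieces; once $\psi^*$ is piecewise affine, the geometric identification with a Laguerre diagram and the rigidity statement follow from the routine convex-order manipulations above. A secondary technical point, to be treated carefully, is the bookkeeping for atoms of $\rho$ situated on shared cell boundaries, where the assignment of mass to cells and the strictness of Jensen's inequality across the kinks of $\psi^*$ must be made precise.
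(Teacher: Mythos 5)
First, a point of comparison: this paper does not prove Theorem~\ref{prop:characterization_strongly_exposed} at all --- it is imported verbatim from Kleiner et al.~\cite{Kleiner2}, and the only argument in the text is the short remark after it, which handles the easy sufficient case (each cell fused to affinely independent points). So your proposal can only be judged against the statement itself. On that basis, your $(\Leftarrow)$ half is essentially a correct proof: the dictionary between upper envelopes of affine functions and Laguerre diagrams ($x_i=\tfrac12 a_i$, $\g_i=b_i+|x_i|^2$) is exactly right, and the chain ``set $\Phi=\psi-\dist(\cdot,\supp\rho)$, use $\int\Phi\,\d\lambda\le\int\psi\,\d\lambda\le\int\psi\,\d\nu$, then force the dilation kernel into the cells and invoke rigidity'' works, \emph{provided} the hypothesis is read as including $\rho|_{\Laguerre}\preceq\nu|_{\Laguerre}$ for every cell. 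Your justification of that proviso --- ``chosen consistently with Theorem~\ref{prop:necessity_extreme}'' --- is circular: in this direction $\rho$ is not yet known to be extreme, and that theorem produces its own convex partition, not the given Laguerre diagram. Nor can the proviso be derived from the displayed condition: on the unit square with uniform $\nu$, the non-exposed fusion $\rho=\tfrac12\delta_{(\frac12,\frac12)}+\tfrac14\delta_{(\frac14,\frac12)}+\tfrac14\delta_{(\frac34,\frac12)}$ together with the diagram of three vertical strips satisfies the condition vacuously (the left strip contains exactly one point of $\supp\rho$, which is not that strip's $\nu$-barycenter, so no admissible $\lambda$ exists). So per-cell domination of $\rho$ must be treated as part of the hypothesis, not smuggled in.

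The genuine gaps are in $(\Rightarrow)$. First, everything rests on the existence of an optimal dual potential $\psi^*$ (a convex $\psi\ge\Phi$ attaining $\min\int\psi\,\d\nu$ with zero duality gap); for a merely Lipschitz $\Phi$ on a multidimensional domain this is a substantial theorem which you assume rather than prove. Second, and fatally for the argument as written, your ``core step'' --- that $\int_B\psi^*\,\d\nu=\int_B\psi^*\,\d\rho$ plus convexity forces $\psi^*$ to be affine on $B$ --- is false, because a convex function need not be either affine or strictly convex on a set of positive mass. Concretely: take $D=[0,1]$, $\nu$ Lebesgue, $\rho=\tfrac12\delta_{1/4}+\tfrac12\delta_{3/4}$; two points are affinely independent, so the trivial partition $\{D\}$ is a legitimate output of Theorem~\ref{prop:necessity_extreme}. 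Expose $\rho$ by $\Phi(y)=|y-\tfrac12|-\dist\bigl(y,\{\tfrac14,\tfrac34\}\bigr)$ (your own construction from the $(\Leftarrow)$ direction); the optimal dual is the tent $\psi^*(y)=|y-\tfrac12|$, which satisfies $\int_D\psi^*\,\d\nu=\tfrac14=\int_D\psi^*\,\d\rho$ yet has a kink at $\tfrac12$. What equality actually yields, via the dilation kernel, is that $\psi^*$ is affine on the contact set of each supporting hyperplane at each atom; the Laguerre diagram must then be built from these finitely many affine pieces of $\psi^*$ --- e.g.\ by proving that a convex function whose gradient takes finitely many values a.e.\ is globally the maximum of finitely many affine functions --- and in general it bears no relation to the partition furnished by Theorem~\ref{prop:necessity_extreme}. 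That bridge from the abstract convex partition to an honest power diagram is exactly the crux you yourself flag, and your argument does not supply it.
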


In particular, $\rho \in F_{\nu }$ is 
a finitely supported Lipschitz-exposed point if there is a
Laguerre diagram such that for each Laguerre cell $\Laguerre$ with non-vanishing measure $\nu(\Laguerre),$ the relation $\rho |_{\Laguerre}\preceq \nu |_{\Laguerre}$ holds
and if the support of $\rho |_{\Laguerre}$ is affinely independent. 
Indeed, in this case $\lambda |_{\Laguerre}\preceq \nu |_{\Laguerre}$ implies that $\lambda |_{\Laguerre}$ and $\rho |_{\Laguerre}$ have the same mean. In turn, this implies that if $\supp(\lambda )\subseteq \supp(\rho )$ and if the support of $\rho$ is affinely independent then $\rho |_{\Laguerre} = \lambda |_{\Laguerre}$.

For any compact convex set in a normed vector space, the set of exposed points is dense in the set of extreme points of $F_{\nu}$ (Klee, \cite{klee}). Therefore, to optimize a continuous linear objective functional on such a set, it would be sufficient to optimize over the closure of exposed points. In the following, we will focus on the subset of exposed points that are Lipschitz-exposed and where, on each cell $\Laguerre$ of the corresponding Laguerre
diagram, the mass within the respective cell is fused to a unique mass point. 
These extreme points have received considerable attention in the Economics literature in one-dimensional settings (see for example Dworczak and Martini~\cite{dworczak} or Ivanov \cite{Ivanov}). They are simple to use in practice because they can be implemented by only revealing, for a partition of the state space into convex sets, in which partition element the realized state lies. Finally, the relation to Laguerre diagrams  makes these exposed points numerically tractable.
Our computational method derived below focuses on solving the
sender's problem among such exposed points.

\section{The optimization task}\label{sec:Opti}
Motivated by Theorem \ref{prop:characterization_strongly_exposed} we discuss in this section 
the optimization problem \eqref{eq:optimize} of moment Bayesian persuasion as an optimization over Laguerre partitions
along with a relaxation strategy. 
At first, we study the case where the sender commits to an information policy induced by general partitions. To this end, we consider a probability space $(\domain,\Borelsets,\nu)$ on a compact and convex domain $\domain\subset\R^d$ for $d\geq 2$, equipped with a probability measure $\nu\in\Prob(\domain)$ on the Borel-$\sigma$-algebra $\Borelsets$, which is assumed to be absolutely continuous with respect to the Lebesgue measure $\Leb$ with Radon-Nikodym density $\frac{\d\nu}{\d \Leb}\in L^\infty(\Leb)$. For fixed $n\in\N$, let $(B_i)_{i=1,\ldots, n}\subset\Borelsets$ be a $\nu$-partition of $\domain$, i.e. $\nu(B_i\cap B_j)=0$ for $i\neq j$, and $\nu\left(\cup_{i=1}^nB_i\right)=1$. Let $(S, \mathcal{S})\coloneqq \left(\{1, \ldots, n\}, 2^{\{1, \ldots, n\}}\right)$, and assume the sender commits to the information policy $\pi:D\to\Prob(S)$, with $\pi(\omega)=\delta_i\in \Prob(S)$ if and only if $\omega\in B_i$.
After receiving the signal realization $i\in S$, the receiver updates her belief to the posterior $\gamma(\cdot\times \{i\}\ |\ D\times\{i\})$. It holds that
$$\gamma(B\times \{i\}\ |\ D\times\{i\})=\frac{\int_B \pi(\omega)(\{i\}) \,\mathrm d\nu(\omega)}{\int_D \pi(\omega)(\{i\}) \,\mathrm d\nu(\omega)}=\frac{\int_B \Id_{B_i}(\omega) \,\mathrm d\nu(\omega)}{\int_D  \Id_{B_i}(\omega) \,\mathrm d\nu(\omega)}=\frac{\nu(B\cap B_i)}{\nu(B_i)}=\nu(B|B_i).$$
For any non-$\nu$-null set $B\in\Borelsets$, we define the $\nu$-barycenter  
\begin{align*}
b[B]\coloneqq\frac{\int_{B}y\,\d\nu(y)}{\nu(B)}.
\end{align*}
The $\nu$-barycenter of $B_i$ coincides then with the mean of the posterior $\nu(\ \cdot \ |B_i)$. 
To conclude, the posterior mean distribution generated by the information policy $\pi$ is given by $\sum_{i=1}^n\nu(B_i)\delta_{b[B_i]}\in F_\nu$. In this case, for a continuous function $\Phi:\domain \to \R$  the functional \eqref{eq:optimize} takes the following explicit form:
\begin{align}\label{eq:objective_og}
\sum_{i=1,\ldots, n \atop \nu(B_i)>0} \nu(B_i)\Phi(b[B_i]).
\end{align}

Now, in the light of Section~\ref{sec:InformationDesign} we focus on partitions described by Laguerre diagrams.
In this case, we may optimize the cost functional $\eqref{eq:objective_og}$ directly on the parameters $(\sitesVector,\gVector)$ describing Laguerre cells
\begin{align}\label{eq:objective}
\mathcal{F}_n[\sitesVector,\gVector]\coloneqq \sum_{i=1,\ldots, n \atop m_i[\sitesVector,\gVector]>0} m_i[\sitesVector,\gVector]\Phi(b_i[\sitesVector,\gVector]),
\end{align}
where $m_i[\sitesVector,\gVector]\coloneqq \nu(\Laguerre_i[\sitesVector,\gVector]),$ and $b_i[\sitesVector,\gVector]\coloneqq b[\Laguerre_i[\sitesVector,\gVector]]$ (cf. equation \eqref{eq:LaguerreCell}), to be maximized over $n$-tuples of pairwise distinct sites 
$
\sitesSet\coloneqq\lbrace \sitesVector=(x_1,\ldots,x_n)\in\left(\R^{d}\right)^n: x_i \in \R^{d},\;  x_i \neq x_j \ \text{ for } \ i\neq j \rbrace
$
 and weight vectors $\gVector \in \R^n$. 
Note that Laguerre cells might be sets of vanishing measure.
Since Laguerre cells are by definition a $\nu$-partition of $\domain$, the set of associated characteristic functions $\chi_{\Laguerre_i[\sitesVector,\gVector]}$ of the 
Laguerre cells $\Laguerre_i[\sitesVector,\gVector]$ for $i=1,\ldots, n$ forms a partition of unity of $\domain$, i.e.
 $\sum_{i_1,\ldots n} \chi_{\Laguerre_i[\sitesVector,\gVector]}=1$ a.e. in $\domain$. We define the power diagram associated with an $n$-tuple of sites $\sitesVector$ and a weight vector $\gVector$
as
\begin{align}
\LC[\sitesVector,\gVector] \coloneqq (\chi_{\Laguerre_1[\sitesVector,\gVector]},
\ldots,\chi_{\Laguerre_n[\sitesVector,\gVector]}).
\end{align} 
Along maximizing sequences for the functional $\mathcal{F}_n[\cdot, \cdot]$  
it might happen that subsets of Laguerre cells 
collapse, or that sites as well as weights diverge.
Here, the notion of power diagram as partitions of unity 
helps to deal with these degenerate cases. At first, we obtain
the following relative compactness result: 
\begin{lmm}\label{lem:relComp}
The set $\SLC \coloneqq \LC(\sitesSet\times\R^n)$ is relatively compact in 
$L^1(\nu)^n$ and any limit of a converging sequences in $\SLC$ is  a partition of unity a.e. on $\domain$. 
\end{lmm}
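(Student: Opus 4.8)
The plan is to work directly with the vector of characteristic functions rather than with the parameters $(\sitesVector,\gVector)$, so that the possible divergence of sites or weights along a sequence becomes irrelevant: what we must show is that the image set $\SLC$ is a relatively compact subset of $L^1(\nu)^n$. Since $L^1(\nu)^n$ carries the product topology, it suffices to establish relative compactness of each component family $\{\chi_{\Laguerre_i[\sitesVector,\gVector]} : (\sitesVector,\gVector)\in\sitesSet\times\R^n\}$ in $L^1(\nu)$ and then, from any given sequence, to extract a single subsequence along which all $n$ components converge by a diagonal argument over $i=1,\dots,n$. The crucial structural input is that, by \eqref{eq:LaguerreCell}, each Laguerre cell $\Laguerre_i[\sitesVector,\gVector]$ is the intersection of finitely many half-spaces with $\domain$, hence a convex subset of the fixed compact convex set $\domain$.

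For the compactness of a single component I would pass through the space of functions of bounded variation. The key geometric fact is the monotonicity of perimeter under convex inclusion: if $K\subseteq\domain$ is convex then $\operatorname{Per}(K)\le\operatorname{Per}(\domain)$, which follows because the nearest-point projection onto $K$ is $1$-Lipschitz and maps $\partial\domain$ onto $\partial K$, and $1$-Lipschitz maps do not increase $(d-1)$-dimensional Hausdorff measure. Consequently every indicator $\chi_{\Laguerre_i[\sitesVector,\gVector]}$ satisfies the uniform bound
\begin{align*}
\|\chi_{\Laguerre_i[\sitesVector,\gVector]}\|_{L^1(\Leb)}+|D\chi_{\Laguerre_i[\sitesVector,\gVector]}|(\R^d)\le \Leb(\domain)+\operatorname{Per}(\domain)<\infty,
\end{align*}
so the component family is bounded in $BV(\domain)$. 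The compact embedding $BV(\domain)\hookrightarrow L^1(\domain,\Leb)$ then yields, for any sequence, a subsequence converging in $L^1(\Leb)$; and because $\tfrac{\d\nu}{\d\Leb}\in L^\infty(\Leb)$ we have $\|f\|_{L^1(\nu)}\le\|\tfrac{\d\nu}{\d\Leb}\|_{L^\infty}\|f\|_{L^1(\Leb)}$, so the same subsequence converges in $L^1(\nu)$. Diagonalizing over the finitely many indices $i$ gives relative compactness of $\SLC$ in $L^1(\nu)^n$.

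It remains to identify the limit as a partition of unity. Along the convergent subsequence each component $\chi_{\Laguerre_i[\sitesVector^k,\gVector^k]}$ converges in $L^1(\nu)$ to some $\chi_i^\infty$, and passing to a further subsequence we may assume convergence $\nu$-a.e., so that $\chi_i^\infty\in\{0,1\}$ $\nu$-a.e. Since each power diagram is a partition of unity, $\sum_{i=1}^n\chi_{\Laguerre_i[\sitesVector^k,\gVector^k]}=1$ $\nu$-a.e. for every $k$; as a finite sum of $L^1(\nu)$-convergent sequences converges to the sum of the limits, we obtain $\sum_{i=1}^n\chi_i^\infty=1$ $\nu$-a.e., so the limit is again a partition of unity. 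I expect the main obstacle to be the careful bookkeeping of degenerate cells --- cells that are empty or collapse to lower-dimensional convex sets along the sequence --- for which the corresponding indicators vanish in $L^1$: the $BV$ estimate accommodates these automatically, since a lower-dimensional convex set is $\Leb$-null and contributes the zero function, but one must verify that the perimeter bound and the embedding are applied uniformly in $i$ and independently of which cells degenerate.
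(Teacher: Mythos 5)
Your proof is correct, and it reaches relative compactness by a genuinely different route from the paper. The paper proves a translation estimate and invokes the Fr\'echet--Kolmogorov theorem: extending $\nu$ by zero outside $\domain$, it bounds $\Vert\chi_{\Laguerre_i[\sitesVector,\gVector]}(\cdot-h)-\chi_{\Laguerre_i[\sitesVector,\gVector]}\Vert_{L^1(\nu)}\le (n-1)\Vert\tfrac{\d\nu}{\d x}\Vert_\infty\diam(\domain)^{d-1}\vert h\vert$, using that the boundary of each cell interior to $\domain$ consists of at most $n-1$ planar interfaces, each of $\mathcal{H}^{d-1}$-measure at most $\diam(\domain)^{d-1}$. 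You instead derive a uniform $BV$ bound from monotonicity of perimeter under inclusion of convex sets and use the compact embedding $BV(\domain)\hookrightarrow L^1(\Leb)$, transferring to $L^1(\nu)$ via $\tfrac{\d\nu}{\d\Leb}\in L^\infty(\Leb)$; the identification of the limit as a $\{0,1\}$-valued partition of unity is then essentially the same in both proofs (pass the identity $\sum_{i}\chi_{\Laguerre_i[\sitesVector^k,\gVector^k]}=1$ $\nu$-a.e.\ to the $L^1(\nu)$ limit). As for what each approach buys: your bound $\operatorname{Per}(\Laguerre_i[\sitesVector,\gVector])\le\operatorname{Per}(\domain)$ is independent of $n$ and automatically controls the portion of $\partial\Laguerre_i[\sitesVector,\gVector]$ lying on $\partial\domain$, which the paper's interface count does not cover --- the translated indicator also differs from the original near $\partial\domain\cap\partial\Laguerre_i[\sitesVector,\gVector]$, so strictly speaking the paper's constant should carry an additional term for $\partial\domain$, an issue your argument sidesteps entirely; conversely, the paper's route is more elementary, requiring nothing beyond the translation criterion. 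One point worth making explicit in your write-up: the compact embedding $BV(\domain)\hookrightarrow L^1(\domain,\Leb)$ requires $\domain$ to be a Lipschitz (extension) domain, which holds here because $\domain$ is compact and convex with nonempty interior (it has positive Lebesgue measure since the probability measure $\nu$ is absolutely continuous with respect to $\Leb$); alternatively, view the indicators as elements of $BV(\R^d)$ supported in the fixed compact set $\domain$ and use local $BV$ compactness.
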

\begin{proof}
Let $\nu$ be trivially extended onto $\R^d$ with density $0$ outside $\domain$.  
Each non-empty Laguerre cell $\Laguerre_i[\sitesVector,\gVector]$ is convex and its boundary in the interior of 
$\domain$ is polygonal and consists of at most $n-1$ planar interfaces. 
Each of these interior interfaces has at most a $\mathcal{H}^{d-1}$ measure $\diam(\domain)^{d-1}$ inside of $\domain$. 
Thus, for $h\in \R$ we observe that  
$$
\Vert\chi_{\Laguerre_i[\sitesVector,\gVector]}(\cdot-h)-
\chi_{\Laguerre_i[\sitesVector,\gVector]}\Vert_{L^1(\nu)}\leq(n-1)\left\Vert\tfrac{\d\nu}{\d x}\right\Vert_\infty\diam(\domain)^{d-1}h\,.
$$ 
This, together with the compactness of $\domain$, implies by the Fr\'echet-Kolmogorov theorem the relative compactness of 
$\powerDiag(\sitesSet\times\R^n)$ in $L^1(\nu)$. Let $(\sitesVector^k)_k\subset\sitesSet$, and $(\gVector^k)_k\subset\R^n$.  
Since $\chi_{\Laguerre_i[\sitesVector^k,\gVector^k]}=0$ outside 
$\domain$ any $L^1(\nu)$-limit of $(\chi_{\Laguerre_i[\sitesVector^k,\gVector^k]})_{i=1,\ldots, n}$ for $k\to \infty$ 
is of the form $\left(\chi^1,\ldots,\chi^n\right) \in L^1(\nu;\{0,1\})^n$ and $\supp \chi_i \subseteq\domain$. 
Finally, 
$\Vert\sum_{i=1,\ldots, n} \chi^i-1\Vert_{L^1}=
\lim_{k\rightarrow\infty}\Vert\sum_{i=1,\ldots, n} \chi_{\Laguerre_i[\sitesVector^k,\gVector^k]}-1 \Vert_{L^1}=0$
and thus $\left(\chi^1,\ldots,\chi^n\right)$ is a partition of unity.
\end{proof}
Let us denote by $\SCLC$ the closure of $\SLC=\LC(\sitesSet\times\R^n) \in L^1(\nu;\{0,1\})^n$.
For $\CLC = \left(\chi^1,\ldots,\chi^n\right) \in \SCLC$ 
we define the relaxed functional 
\begin{align}\label{eq:relaxObjective}
\mathcal{F}_n^c[\CLC] \coloneqq \sum_{i=1,\ldots, n \atop m[\chi^i]>0} m[\chi^i] \; \Phi(b[\chi^i])
\end{align}
where we define with a slight misuse of notation the $\nu$-mass $m[\chi] \coloneqq \int_\domain \chi \d\nu$ of a characteristic function $\chi$ and its $\nu$-barycenter $b[\chi] \coloneqq m[\chi]^{-1} \int_\domain x\,\chi \d\nu$ for $m[\chi] >0$.
Let us remark that some of the $\chi^i$ in $\CLC$ (but not all) might have zero mass $m[\chi^i]$.
For this relaxed functional we obtain the following existence result of a maximum.
\begin{thrm}
Assume $\Phi$ to be a upper semicontinuous function on $\domain$. Then $\mathcal{F}_n^c$ attains its maximum on $\SCLC$.
\end{thrm}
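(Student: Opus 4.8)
The plan is to apply the direct method: combine the compactness furnished by Lemma~\ref{lem:relComp} with upper semicontinuity of the relaxed functional, and then invoke the fact that a real-valued upper semicontinuous function attains its maximum on a nonempty compact set. First I would record that $\SCLC$, being by definition the closure in $L^1(\nu)^n$ of the relatively compact set $\SLC$, is itself (nonempty and) compact, and that by Lemma~\ref{lem:relComp} every $\CLC=(\chi^1,\dots,\chi^n)\in\SCLC$ is a partition of unity with $\chi^i\in L^1(\nu;\{0,1\})$ and $\supp\chi^i\subseteq\domain$. Since $\domain$ is compact, each barycenter $b[\chi^i]$ (when defined) is an average of points of $\domain$, hence lies in $\domain$ by convexity; and because $\Phi$ is upper semicontinuous on the compact set $\domain$ it is bounded above there, say $\Phi\le C$, so $\mathcal{F}_n^c$ takes finite real values on all of $\SCLC$.

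The core step is to show that $\mathcal{F}_n^c$ is upper semicontinuous on $\SCLC$. Let $\CLC^k=(\chi^{1,k},\dots,\chi^{n,k})\to\CLC=(\chi^1,\dots,\chi^n)$ in $L^1(\nu)^n$. From $\|\chi^{i,k}-\chi^i\|_{L^1(\nu)}\to 0$ I immediately obtain continuity of the mass, $m[\chi^{i,k}]\to m[\chi^i]$, and—since $\domain$ is bounded, so that $|y|$ is uniformly bounded there—continuity of the unnormalised first moment, $\int_\domain y\,\chi^{i,k}\d\nu\to\int_\domain y\,\chi^i\d\nu$. I would then treat each summand $g_i(\CLC):=m[\chi^i]\Phi(b[\chi^i])$ (read as $0$ when $m[\chi^i]=0$) separately and prove it is upper semicontinuous; since $\mathcal{F}_n^c=\sum_{i=1}^n g_i$ is a finite sum, upper semicontinuity of each $g_i$ transfers to $\mathcal{F}_n^c$.

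For an index $i$ with $m[\chi^i]>0$ we have $m[\chi^{i,k}]>0$ for large $k$, the barycenters converge, $b[\chi^{i,k}]=\big(\int_\domain y\,\chi^{i,k}\d\nu\big)/m[\chi^{i,k}]\to b[\chi^i]$, and upper semicontinuity of $\Phi$ gives $\limsup_k\Phi(b[\chi^{i,k}])\le\Phi(b[\chi^i])$; combined with $m[\chi^{i,k}]\to m[\chi^i]>0$ this yields $\limsup_k g_i(\CLC^k)\le m[\chi^i]\Phi(b[\chi^i])=g_i(\CLC)$ (passing to a subsequence realising the $\limsup$ and dividing by the mass to transfer the usc bound). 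The delicate case, which I expect to be the main obstacle, is an index $i$ whose mass collapses in the limit, $m[\chi^i]=0$, so that the corresponding cell silently leaves the sum defining $\mathcal{F}_n^c[\CLC]$; here one cannot speak of a limiting barycenter, and one must exclude a spurious positive contribution surviving in the limit. This is resolved by the bound $g_i(\CLC^k)\le C\,m[\chi^{i,k}]$, valid because $b[\chi^{i,k}]\in\domain$ and $\Phi\le C$ there, together with $m[\chi^{i,k}]\to m[\chi^i]=0$, which forces $\limsup_k g_i(\CLC^k)\le 0=g_i(\CLC)$.

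Summing these estimates over the finitely many indices $i=1,\dots,n$ (so that $\limsup$ of the sum is dominated by the sum of the $\limsup$s, with no $\infty-\infty$ ambiguity since each term is bounded above) gives $\limsup_k\mathcal{F}_n^c[\CLC^k]\le\mathcal{F}_n^c[\CLC]$, i.e. upper semicontinuity. Existence of a maximizer then follows in the standard way: take a maximizing sequence in $\SCLC$, extract an $L^1(\nu)^n$-convergent subsequence by compactness, note its limit lies in the closed set $\SCLC$, and use upper semicontinuity to conclude that this limit attains the supremum, hence is a maximizer.
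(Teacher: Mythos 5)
Your proposal is correct and follows essentially the same argument as the paper: compactness of $\SCLC$ from Lemma~\ref{lem:relComp}, boundedness of the upper semicontinuous $\Phi$ from above on the compact domain, a case split between cells whose mass vanishes in the limit (handled by the bound $m[\chi^{i,k}]\Phi(b[\chi^{i,k}])\le \bar\Phi\, m[\chi^{i,k}]\to 0$) and cells with positive limit mass (handled by barycenter convergence plus upper semicontinuity of $\Phi$), yielding the limsup inequality along a maximizing sequence. The only difference is presentational: you package the estimates as a standalone upper-semicontinuity lemma for $\mathcal{F}_n^c$ and then invoke the direct method, whereas the paper runs the same estimates directly on the maximizing sequence.
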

\begin{proof}
At first, we recall that upper semicontinuous functions on compact domains are bounded from above. 
Let $\bar \Phi$ denote the maximum of $\Phi$ on $\domain$, which exists due to the upper semicontinuity of $\Phi$.
Let $$\left(\CLC_k=(\chi^1_k,\ldots,\chi^n_k)\right)_{k\in \N}\subset \SCLC$$ be a maximizing sequence of \eqref{eq:relaxObjective}. 
Due to the compactness of $\SCLC$, we obtain that, up to the selection of a subsequence, $(\chi^1_k,\ldots,\chi^n_k)$ converges in $L^1(\nu)$ 
to a limit $\CLC=(\chi^1,\ldots,\chi^n)$.
For all $i\in \{1,\ldots, n\}$ with $m[\chi^i]=0$ it holds that $m[\chi_k^i] \to 0$ for  $k\to \infty$ 
and thus $m[\chi_k^i]  \Phi(b[\chi^i]) \leq m[\chi_k^i] \bar \Phi \to 0$. 
For $m[\chi^i]>0$ the sequence of barycenters $\left(b[\chi^i_k]\right)_{k\in \N}$ converges to $b[\chi^i]$. 
Taking into account the upper semicontinuity of $\Phi$, this implies that 
$\limsup_{k\to \infty} \Phi(b[\chi^i_k]) \leq \Phi(b[\chi^i])$.
Altogether, we obtain 
\begin{align*}
\limsup_{k\to \infty} \mathcal{F}_n^c[\CLC_k] = \limsup_{k\to \infty} \sum_{i=1,\ldots, n \atop m[\chi^i_k]>0} m[\chi^i_k] \; \Phi(b[\chi^i_k]) 
\leq \sum_{i=1,\ldots, n \atop m[\chi^i]>0} m[\chi^i] \; \Phi(b[\chi^i]) =\mathcal{F}_n^c[\CLC]\,.
\end{align*}
Hence, the relaxed functional $\mathcal{F}_n^c$ attains its maximum on $\SCLC$ at $\CLC$.  
\end{proof}

\begin{rmrk}
The relation to the concept of stochastic dominance is as follows.
As an upper semicontinuous function, $\Phi$ is a $\nu$-measurable function on $\domain$.  
Given a polyhedral partition of $\domain$ into Laguerre cells 
$\left(\Laguerre_i[\sitesVector,\gVector]\right)_{i=1,\ldots, n}$,
one may collapse the mass of each cell at its barycenter. 
This induces an atomic probability measure 
$$\rho \coloneqq 
\sum_{i=1,\ldots, n \atop m_i[\sitesVector,\gVector]>0} m_i[\sitesVector,\gVector] \; \delta_{b_i[\sitesVector,\gVector]}
$$
on $\domain$.
Recall that a probability measure $\nu$ dominates a probability measure $\rho$ in convex order if and only if $\mathbb{E}_{X\sim\nu}[\Phi(X)]\geq \mathbb{E}_{X\sim\rho}[\Phi(X)]$ for all convex functions $\Phi:\R^d\rightarrow\R$ such that both expectations exist. 
By Jensen's inequality, this indeed holds for the initial 
probability measure $\nu$ and the atomic probability measure $\rho$ considered here.
Hence, in our ansatz we consider the expected value of the given function $\Phi$ with respect to 
an atomic measure associated with some power diagram $\CLC$ and maximize this expected value over all atomic measures induced by power diagrams $\CLC$. 
\end{rmrk}
To avoid the relaxation, one might consider hard constraints to ensure that 
pairwise distances between sites and cell masses do not vanish in the limit along a maximizing sequence.
Alternatively, a penalty formulation can be used as a more robust and effective alternative.
To this end, we define for a penalty parameter $\eta>0$ 
\begin{align}\label{eq:objective3}
\mathcal{F}_n^\eta(\sitesVector,\gVector)= \mathcal{F}_n(\sitesVector,\gVector) - \eta\mathcal{R}_n(\sitesVector,\gVector),
\end{align}
where the penalty term $\mathcal{R}_n$ is given as 
\begin{align}\label{def:regularizer}
\mathcal{R}_n(\sitesVector,\gVector)\coloneqq\sum_{i=1,\ldots, n \atop m_i[\sitesVector,\gVector]>0}\int_\domain\vert y-x_i\vert^2\chi_{\Laguerre_i[\sitesVector,\gVector]}(y)\d\nu(y)+\sum_{1\leq i,j \leq n \atop {i\neq j\atop m_i[\sitesVector,\gVector],m_j[\sitesVector,\gVector]>0}}\frac{m_i[\sitesVector,\gVector]m_j[\sitesVector,\gVector]}{\vert x_i-x_j\vert^2}.
\end{align}
Here, the cell masses as scaling factors and the characteristic functions of cells as weight functions
are to be understood as the natural scaling of the corresponding penalty terms. In particular, 
we observe a stronger penalization of the drift of sites away from the associated cell  
and the fusion of pairs of sites in case of larger cell masses.

It might happen that in the limit along a maximizing sequence 
$(\sitesVector^N,\gVector^N)_N\subset\mathcal{X}_l\times\R^l$
of $\mathcal{F}_l^\eta$ for some $l\in \N$ 
Laguerre cells collapse and the effective number of cells
decreases. The following existence theorem takes this into account.

\begin{thrm} \label{thm:maximizer}
Let us assume that $\Phi\in C(\domain)$ with maximum $\bar \Phi$ on $D$.
Then, for given number of sites $l\in\N$ there exists an $n\leq l$, 
such that a maximizer $(\sitesVector^*,\gVector^*)$ of $\mathcal{F}_n^\eta$ exists  
with  $m_i[\sitesVector^*,\gVector^*]>0$ for $i=1,\ldots,n$
and  $\mathcal{F}_n^\eta(\sitesVector^*,\gVector^*)\geq\sup_{(\sitesVector,\gVector)}\mathcal{F}_l^\eta(\sitesVector,\gVector)$.
\end{thrm}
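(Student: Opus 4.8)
The plan is to argue by compactness along a maximizing sequence, letting the penalty $\mathcal{R}_n$ rule out exactly the degeneracies that would obstruct the passage to the limit. First I would record that $M \coloneqq \sup_{(\sitesVector,\gVector)} \mathcal{F}_l^\eta$ is finite: since every power diagram has total $\nu$-mass $1$ and $\mathcal{R}_l \geq 0$, one has $\mathcal{F}_l^\eta \leq \bar\Phi$, while any configuration with distinct sites and all cells of positive mass has finite penalty, so $M > -\infty$. I then pick a maximizing sequence $(\sitesVector^N,\gVector^N)_N \subset \sitesSet\times\R^l$ and invoke Lemma~\ref{lem:relComp} to pass to a subsequence along which $\LC[\sitesVector^N,\gVector^N] \to (\chi^1,\ldots,\chi^l)$ in $L^1(\nu)^l$, a partition of unity with $\chi^i\in\{0,1\}$ a.e.\ and convergent masses $m_i[\sitesVector^N,\gVector^N] \to m[\chi^i]$. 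After relabeling, $\chi^1,\ldots,\chi^n$ are those of positive mass; this $n\leq l$ is the one claimed.

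The core step is to show that the surviving cells $\chi^1,\ldots,\chi^n$ constitute a genuine Laguerre diagram of $n$ convergent, distinct sites. This is where the penalty is essential. From $\mathcal{F}_l^\eta(\sitesVector^N,\gVector^N)\to M>-\infty$ and $\mathcal{F}_l\leq\bar\Phi$ I obtain that $\eta\,\mathcal{R}_l(\sitesVector^N,\gVector^N)$ stays bounded. Since the surviving masses are bounded away from $0$ for $i\leq n$, the bound on the first penalty sum $\sum_i\int_\domain|y-x_i^N|^2\chi_{\Laguerre_i[\sitesVector^N,\gVector^N]}\d\nu$ forces each surviving site $x_i^N$ to stay bounded (otherwise this integral diverges, as the cell carries mass inside the bounded set $\domain$), and the bound on the second sum $\sum_{i\neq j} m_i^N m_j^N/|x_i^N-x_j^N|^2$ forces distinct surviving sites to remain uniformly separated. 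Passing to a further subsequence gives $x_i^N\to x_i^*$ with the $x_i^*$ pairwise distinct, so $\sitesVector^*\coloneqq(x_1^*,\ldots,x_n^*)\in\sitesSet$.

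For the weights, which $\mathcal{R}_n$ does not control directly, I would normalize the shift-invariant diagram by $g_1^N=0$ and argue that no surviving weight difference can diverge: if $g_i^N-g_j^N\to+\infty$ for $i,j\leq n$, then the inequality $|y-x_j^N|^2-|y-x_i^N|^2\leq g_j^N-g_i^N$ defining the $j$-th cell fails for every $y\in\domain$ once $N$ is large (its left side is bounded on the bounded domain since the sites are bounded), so $m_j^N\to 0$, a contradiction. Hence the normalized surviving weights are bounded and, along a further subsequence, $g_i^N\to g_i^*$. Writing $\tilde\Laguerre_i^N$ for the cell of the reduced configuration $(x_1^N,\ldots,x_n^N;g_1^N,\ldots,g_n^N)$, I have $\Laguerre_i[\sitesVector^N,\gVector^N]\subseteq\tilde\Laguerre_i^N$ and $\tilde\Laguerre_i^N\setminus\Laguerre_i[\sitesVector^N,\gVector^N]\subseteq\bigcup_{j>n}\Laguerre_j[\sitesVector^N,\gVector^N]$, a set of $\nu$-measure $\sum_{j>n}m_j^N$, which tends to $0$. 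Because the reduced cells depend continuously on their parameters and their interfaces are $\nu$-null (the $x_i^*$ are distinct and $\nu\ll\Leb$), $\chi_{\tilde\Laguerre_i^N}\to\chi_{\Laguerre_i[\sitesVector^*,\gVector^*]}$ in $L^1(\nu)$; together with the vanishing difference this identifies $\chi^i=\chi_{\Laguerre_i[\sitesVector^*,\gVector^*]}$, so each surviving cell has $m_i[\sitesVector^*,\gVector^*]=m[\chi^i]>0$.

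The final step is the value comparison. As $\Phi$ is continuous and bounded on the compact $\domain$, the collapsing cells contribute $\sum_{j>n}m_j^N\Phi(b_j[\sitesVector^N,\gVector^N])\to 0$ and the surviving masses and barycenters converge, so $\mathcal{F}_l(\sitesVector^N,\gVector^N)\to\mathcal{F}_n(\sitesVector^*,\gVector^*)$; meanwhile dominated convergence on the surviving terms and nonnegativity of the omitted ones give $\liminf_N\mathcal{R}_l(\sitesVector^N,\gVector^N)\geq\mathcal{R}_n(\sitesVector^*,\gVector^*)$. Hence
\[
M=\lim_N\mathcal{F}_l^\eta(\sitesVector^N,\gVector^N)\leq\mathcal{F}_n(\sitesVector^*,\gVector^*)-\eta\,\mathcal{R}_n(\sitesVector^*,\gVector^*)=\mathcal{F}_n^\eta(\sitesVector^*,\gVector^*).
\]
Embedding any $n$-point configuration into an $l$-point one by adjoining spurious distinct sites with sufficiently negative weights (empty, zero-mass cells that enter neither $\mathcal{F}$ nor $\mathcal{R}$) yields $\sup\mathcal{F}_n^\eta\leq\sup\mathcal{F}_l^\eta=M$, so $(\sitesVector^*,\gVector^*)$ is indeed a maximizer of $\mathcal{F}_n^\eta$ with the asserted value. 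I expect the hard part to be the middle two steps: certifying that the $L^1$-limit of the diagrams is again the Laguerre diagram of precisely the surviving sites, with those sites neither colliding nor escaping and with bounded relative weights --- which is exactly what the two penalty terms together with the boundedness of $\domain$ are engineered to ensure.
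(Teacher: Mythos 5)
Your proof is correct, and it reaches the theorem by a genuinely different architecture than the paper, even though the core estimates coincide. The paper argues in two separate steps: first, a truncation inequality along a maximizing sequence for $\mathcal{F}_l^\eta$ with convergent masses, showing that discarding the cells whose mass vanishes can only increase the limiting value, so $\sup\mathcal{F}_l^\eta\leq\sup\mathcal{F}_n^\eta$; second, a separate existence proof for $\mathcal{F}_n^\eta$, in which a \emph{fresh} maximizing sequence is assumed to have non-vanishing limit masses, the penalty then yields exactly the a priori bounds you derive (boundedness of $\dist(x_i^N,\domain)$, a lower bound on $\vert x_i^N-x_j^N\vert$, and bounded weights after the normalization $\g_1^N=0$), and the Weierstra{\ss} theorem concludes. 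You instead run a single pass on the original sequence: compactness of the diagrams via Lemma~\ref{lem:relComp} (which the paper's proof never invokes), the penalty bounds to extract convergent, pairwise distinct surviving sites and bounded surviving weights, an explicit identification of the $L^1(\nu)$-limit as the Laguerre diagram of the limit reduced configuration --- via the inclusion $\Laguerre_i[\sitesVector^N,\gVector^N]\subseteq\tilde\Laguerre_i^N$ together with the observation that the difference is swallowed by the collapsing cells, whose total mass tends to zero --- and finally the embedding of any $n$-site configuration into an $l$-site one by adjoining dummy sites with sufficiently negative weights, which gives $\sup\mathcal{F}_n^\eta\leq\sup\mathcal{F}_l^\eta$ and certifies that your limit point is itself a maximizer. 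Your organization buys two things the paper leaves implicit: the paper's notation $\mathcal{F}_n(\sitesVector^N,\gVector^N)$ silently identifies the surviving cells of the full $l$-site diagram with the cells of the truncated $n$-site diagram, which is precisely what your inclusion argument justifies; and the paper's second step assumes, without proof, that a maximizing sequence for $\mathcal{F}_n^\eta$ again has all limit masses positive (strictly speaking this would require iterating the reduction), whereas your embedding inequality removes the need for a second maximizing sequence altogether. What the paper's route buys in exchange is brevity and a reusable template --- the same two-step argument is cited nearly verbatim for the entropy-regularized and fully discrete analogues later in the paper.
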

\begin{proof}
Consider a maximizing sequence for $\mathcal{F}_l^\eta$. 
Since $m_i[\sitesVector^N,\gVector^N]\in[0,1],$ and $b_i[\sitesVector^N,\gVector^N]\in\domain$ for $m_i[\sitesVector^N,\gVector^N]>0$, we may assume that, up to the selection of a subsequence, the sequences of masses and barycenters converge to limits $m^*_i$ and $b^*_i$, respectively, and that there exists an $n\leq l$ with $m^*_i>0$ if and only if 
$i \leq n$. Then, we obtain
\begin{align*}
\sup_{(\sitesVector,\gVector)}\mathcal{F}_l^\eta(\sitesVector,\gVector)&=\lim_{N\rightarrow\infty}\mathcal{F}_l^\eta(\sitesVector^N,\gVector^N)\leq\limsup_{N\rightarrow\infty}\mathcal{F}_l(\sitesVector^N,\gVector^N)-\eta\liminf_{N\rightarrow\infty}\mathcal{R}_l(\sitesVector^N,\gVector^N)\\
&\leq\limsup_{N\rightarrow\infty}\mathcal{F}_n(\sitesVector^N,\gVector^N)+\limsup_{N\rightarrow\infty}\sum_{i=n+1,\ldots, l \atop m_i[\sitesVector^N,\gVector^N]>0} m_i[\sitesVector^N,\gVector^N]\Phi(b_i[\sitesVector^N,\gVector^N])-\eta\liminf_{N\rightarrow\infty}\mathcal{R}_n(\sitesVector^N,\gVector^N)\\
&\quad -\eta\liminf_{N\rightarrow\infty}
\left(\mathcal{R}_l(\sitesVector^N,\gVector^N)-\mathcal{R}_n(\sitesVector^N,\gVector^N)\right)\\
&=\lim_{N\rightarrow\infty}\mathcal{F}_n(\sitesVector^N,\gVector^N)+0-\eta\lim_{N\rightarrow\infty}\mathcal{R}_n(\sitesVector^N,\gVector^N)-\eta\liminf_{N\rightarrow\infty}\left(\mathcal{R}_l(\sitesVector^N,\gVector^N)-\mathcal{R}_n(\sitesVector^N,\gVector^N)\right)\\
&\leq\lim_{N\rightarrow\infty}\mathcal{F}_n(\sitesVector^N,\gVector^N)-\eta\lim_{N\rightarrow\infty}\mathcal{R}_n(\sitesVector^N,\gVector^N)=\lim_{N\rightarrow\infty}\mathcal{F}_n(\sitesVector^N,\gVector^N)-\eta\mathcal{R}_n(\sitesVector^N,\gVector^N)\\
&=\lim_{N\rightarrow\infty}\mathcal{F}_n^\eta(\sitesVector^N,\gVector^N).
\end{align*}
Hence, if it exists, a maximizer of $\mathcal{F}_n^\eta$ attains a greater or equal objective value than a maximizer of 
$\mathcal{F}_l^\eta$. 

Now, let $(\sitesVector^N,\gVector^N)_N\subset\sitesSet\times\R^n$ be a maximizing sequence for $\mathcal{F}_n^\eta$ and in analogy to before assume that $m_i[\sitesVector^N,\gVector^N]$ and $b_i[\sitesVector^N,\gVector^N]$ 
have limits $m^*_i>0$, and $b^*_i\in D$ for $i=1,\ldots, n$, that $\mathcal{F}_n^\eta(\sitesVector^N,\gVector^N)$ is monotonically increasing in $N$ and that $m_i[\sitesVector^N,\gVector^N]\geq\tfrac12 m_i^*$. Consequently, for $\bar \Phi$ being the maximal value of $\Phi$ on $D$ the estimate
\begin{align}
\mathcal{F}_n^\eta(\sitesVector^0,\gVector^0) &\leq \nu(D) \bar \Phi - 
\eta\left(\sum_{i=1,\ldots, n}\int_\domain\vert y-x_i^N\vert^2\chi_{\Laguerre_i[\sitesVector^N,\gVector^N]}\d\nu(y)+\sum_{1\leq i,j \leq n\atop i\neq j}\frac{m_i[\sitesVector^N,\gVector^N]m_j[\sitesVector^N,\gVector^N]}{\vert x_i^N-x_j^N\vert^2}\right) \nonumber\\
&\leq \bar \Phi -\eta\left(\frac12\sum_{i=1,\ldots,n}\dist^2(x_i^N,\domain)m_i^*+\frac14\sum_{1\leq i,j \leq n \atop i\neq j}\frac{m_i^*m_j^*}{\vert x_i^N-x_j^N\vert^2}\right)
\end{align}
is obtained. This implies the following a priori bounds:
\begin{align*}
\dist(x_i^N,\domain) \leq \sqrt{ \frac{2(\bar \Phi - \mathcal{F}_n^\eta(\sitesVector^0,\gVector^0))}{\eta m_i^*}} ,\quad
\vert x_i^N -x_j^N\vert \geq \sqrt{\frac{\eta m_i^*m_j^*}{4( \bar \Phi - \mathcal{F}_n^\eta(\sitesVector^0, \gVector^0))}}
\end{align*}
for all $N$. For the uniform bound on $\gVector$, recall that $\gVector$ and $\gVector+\lambda\Id_n$ both induce the same Laguerre diagram. Hence, we may assume without loss of generality that $\g_1^N=0$ for all $N\in\N$. 
Then,  $\lim_{N\rightarrow\infty}\g_j^N=\infty$ for $j=2,\ldots,n$ 
would imply $\lim_{N\rightarrow\infty} m_1[\sitesVector^N,\gVector^N]=0$, which contradicts our choice of $n$.
Similarly, $\lim_{N\rightarrow\infty}\g_j=-\infty$ would imply $\lim_{N\rightarrow\infty} m_j[\sitesVector^N,\gVector^N]=0$, 
which again is a contradiction.
Hence, $\vert\gVector^N\vert\leq C$ for some $C>0$ and all $N\in \N$. 
Finally, given these a priori bounds, the existence of a maximizer of $\mathcal{F}_n^\eta$ follows directly from the Weierstra{\ss} extreme value theorem. 
\end{proof}

\section{Semi-discrete optimal transport revisited}\label{sec:SD}
In the previous section we stated existence results of maximizers of the sender's revenue over the class of Laguerre partitions. 
To compute these optimal Laguerre partitions we recall in this section the connection of Laguerre diagrams 
to solutions of semi-discrete optimal transport problems and the associated dual formulation.

In optimal transport theory, one considers optimal couplings $\Pi\in\Prob(\domain \times \domain) \in U(\mu,\nu)$ of
two probability measures $\mu,\nu\in\Prob(\domain)$. Here,  $\mathcal{U}(\mu,\nu)$ is the set $\Pi\in\Prob(\domain \times \domain)$ with 
$\Pi(A\times\domain)=\mu(A)$ and $\Pi(\domain\times A)=\nu(A)$ for all Borel sets $A \subset \domain$. For the given 
cost function 
$(x,y)  \mapsto \vert x-y\vert^2$ on $\domain \times \domain$ measuring the cost of transport from $x$ to $y$ on $\domain$, a coupling $\Pi$ is optimal if it 
minimizes 
\begin{align}\label{kantorovich_primal_sd}
\Wass^2[\mu,\nu]\coloneqq \inf_{\Pi\in \mathcal{U}(\mu,\nu)}\int_{\domain \times \domain} \vert x-y\vert^2 \d\Pi(x,y).
\end{align}
The functional $\Wass(\mu,\nu)$ is called the 2-Wasserstein distance and defines a metric between the probability measures $\mu$ and $\nu$ in $\Prob(\domain)$ (cf. \cite[Chapter 5]{Sa15}).
In semi-discrete optimal transport the measure $\mu$ is assumed to be a discrete (empirical) probability measure, i.e.
$$\mu\coloneqq\sum_{i=1,\ldots, n} m_i \delta_{x_i}$$
with $\sum_{i=1,\ldots n} m_i  = 1$ and $m_i \geq 0$.
The minimization of \eqref{kantorovich_primal_sd} is a constrained linear minimization problem and denoted the primal Kantorovich problem.
As such, it can naturally be paired with a constrained linear maximization problem as the dual problem, the dual Kantorovich problem \cite[Section 6.1]{AmGi13}.
We obtain
\begin{align}\label{kantorovich_dual_sd} 
\Wass^2[\mu,\nu]=\sup_{(f,\gVector)\in\mathcal{R}}\discreteMeasure\cdot\gVector+\int_\domain f(y)\d\nu(y),
\end{align}
where $\mathbf m\coloneqq(m_1,\ldots,m_n)$, $\gVector\coloneqq(\g_1,\ldots,\g_n)$, and 
$$\mathcal{R}\coloneqq\lbrace (f,\gVector)\in C(\domain)\times\R^n: 
f(y)+\g_i\leq\vert x_i-y\vert^2 \text{  for all  } i=1,\ldots,n \rbrace.$$ 
For given $\gVector\in \R^n$ we obtain for the optimal $f$ which is consistent with the constrained condition $(f,\gVector)\in\mathcal{R}$ that
$f(y) = \gVector^{C}(y)$ for all $y\in D$ with  
\begin{align}
\gVector^{C}(y)\coloneqq\min_{1\leq i \leq n}\vert y-x_i\vert^2 - \text{g}_i.
\end{align}
Given the $C$-transform $\gVector^{C}: \domain \to \R^D$  
one can reformulate the dual Kantorovich problem \eqref{kantorovich_dual_sd} 
as the unconstrained convex program 
\begin{align}\label{dual}
\Wass^2[\mu,\nu]= \max_{\gVector\in\R^n} \Dual[\gVector]
\end{align}
\begin{align}\label{SDOT_prob}
\Dual[\gVector] &\coloneqq \int_\domain\gVector^C(y)d\nu(y) +\discreteMeasure\cdot\gVector
=\sum_{i=1,\ldots,n} \int_{\Laguerre_i[\sitesVector,\gVector]}(\vert y-x_i\vert^2-\g_i)d\nu(y)
+\discreteMeasure\cdot\gVector,
\end{align}
where $\Laguerre_i[\sitesVector,\gVector]$ are the Laguerre cells associated with the weight vector $\gVector$ and the vector of fixed sites $\sitesVector$ defined in \eqref{eq:LaguerreCell}.
Hence, solving \eqref{SDOT_prob} for $\nu$ and $\discreteMeasure$ consists in finding a Laguerre cell partition of $\domain$ described via the weight
vector $\gVector\in\R^n$ with cells centered at the given $\sitesVector$, and  with $\discreteMeasure =(m_i)_{i=0,\ldots, n}$ and $m_i=m(\Laguerre_i[\sitesVector,\gVector])$ for all $i\in \{1,\ldots,n\}$. 
In what follows, we will need to differentiate the functions  
$\mathcal{F}_n^\eta$ 
defined in \eqref{eq:objective} with respect to the weights $g_j$.
For the differentiation of $\Dual(\gVector)$ we obtain
\begin{align}\label{partialDual}
\partial_{g_j} \Dual[\gVector] =  - \int_{\domain} \chi_{\Laguerre_j[\sitesVector,\gVector]}\d\nu(y) + m_j.
\end{align}
\section{Entropy regularization}\label{sec:entropy}
Let us recall that our goal is to maximize the function $\mathcal{F}_n^\eta$ via an optimization of the 
Laguerre cells $\Laguerre_j[\sitesVector,\gVector]$ 
described in terms of the sites $x_j$ and the weights $\g_j$ for $j=1,\ldots, n$. 
In general, changing the sites and the weights fosters topological changes in the diagram's topology, leading 
to a challenging combinatorial optimization problem. 
To avoid this,  we introduce in this section the entropy relaxed optimal transport formulation \cite{PeCu19} of the associated 
semi-discrete optimal transport described in Section~\ref{sec:Opti}, 
which will allow us to derive a computationally efficient algorithm to optimize over 
Laguerre partitions and thus solve the moment Bayesian persuasion problem numerically. 
An extensive overview on entropy regularization of optimal transport is also given by Chewi et. al. in \cite{ChNi24}.

We begin by considering the regularized Wasserstein distance 
\begin{align}\label{kantorovich_primal_reg}
\Wass^2_\varepsilon[\mu,\nu]\coloneqq\inf_{\Pi\in \mathcal{U}(\mu,\nu)}\int_{\{x_1,\ldots,x_n\}\times\domain} c(x,y)\d\Pi(x,y)+\varepsilon\KL[\Pi\vert\xi]
\end{align}
with transport cost $c(x,y)= \vert x-y\vert^2$ from $x$ to $y$ and 
for a regularization parameter $\varepsilon>0$. We consider a measure $\xi\in\Prob(\{x_1,\ldots,x_n\}\times\domain)$ with 
$\supp \mu \otimes \nu \subseteq \supp \xi$ for $\mu$ and $\nu$ as above, and we define
the Kullback-Leibler divergence 
\begin{align}
\KL[\Pi\vert\xi]\coloneqq\int_{\{x_1,\ldots,x_n\}\times\domain}\log\left(\frac{\d\Pi}{\d\xi}(x,y)\right)\d\Pi(x,y)+(\d\xi(x,y)-\d\Pi(x,y))
\end{align}
between the measures $\Pi$ and $\xi$ 
in the case that $\Pi$ is absolutely continuous with respect to $\xi$, and otherwise $\KL[\Pi\vert\xi]\coloneqq\infty$. 
The Kullback-Leibler divergence is a concave functional measuring the dissimilarity of the measures $\Pi$ and $\xi$ and acts here as a regularizing entropy functional.
The standard choice for $\xi$ is $\xi = \mu \otimes \nu$. In fact, as long as the above support property holds 
the functional in  \eqref{kantorovich_primal_reg} only changes by an additive constant and hence, the minimizer 
remains the same. To simplify the optimization algorithm for an entropy regularization of the functional 
$\mathcal{F}_l^\eta$ defined in  \eqref{eq:objective3} $\xi = \Count \otimes \Leb$ is a particularly suitable choice where $\Count = \sum_{j=1,\ldots, n} \delta_{x_j}$ is the counting measure on the support of $\mu$ 
and $\Leb$ the Lebesque measure. For this choice we proceed as follows.

Associated with the constraint optimization problem \eqref{kantorovich_primal_reg} 
is the Lagrangian 
\begin{align*}
\Lagrangian^\varepsilon(\Pi,f,g)= & \int_{\domain \times \domain}c(x,y)\d\Pi(x,y)+\varepsilon\int_{\domain \times \domain}\left(\log\frac{\d\Pi}{\d\xi}-1\right)(x,y)\d\Pi(x,y)\\
&-\left(\int_{\domain \times \domain}f(y)\d\Pi(x,y)-\int_\domain f(y)\d\nu(y)\right)-\left(\int_{\domain \times \domain}g(x)\d\Pi(x,y)-\int_\domain g(x)\d\mu(x)\right).
\end{align*}
Now, assume that $\Pi$ has the density $p$ with respect to $\xi$, i.e.~$\d\Pi(x,y)=p(x,y)\d\xi(x,y)$.
Then, a necessary condition of a saddle point is that the derivative of $\Lagrangian^\varepsilon$
vanishes in all directions $q: (x,y) \mapsto q(x,y)$. Hence, 
\begin{align} \nonumber
0 =\left(\frac{\partial\Lagrangian^\varepsilon}{\partial p}(p)\right)(q)
=&\int_{\domain \times \domain}c(x,y)q(x,y)\d\xi(x,y)+
\varepsilon\int_{\domain \times \domain}\left(\log p(x,y)-1\right)q(x,y)\d\xi(x,y)\\
&+\varepsilon\int_{\domain \times \domain}q(x,y)\d\xi(x,y)
-\int_{\domain \times \domain}f(y)q(x,y)\d\xi(x,y)-\int_{\domain \times \domain}g(x)q(x,y)\d\xi(x,y).
\end{align}
Hence, we obtain $c(x,y)+\varepsilon\log(p(x,y))-f(y)-g(x)=0$ pointwise, or equivalently 
$$p(x,y)=\frac{\d\Pi}{\d\xi}(x,y)=\exp\left(\frac{-c(x,y)+g(x)+f(y)}{\varepsilon}\right).$$
and 
\begin{align*}
\Lagrangian^\varepsilon(\Pi(f,g),f,g)&=\int_\domain f(y)\d\nu(y)+\int_\domain g(x)\d\mu(x)-\varepsilon\int_{\domain \times \domain}\exp\left(\frac{-c(x,y)+g(x)+f(y)}{\varepsilon}\right)\d\xi(x,y) \\
&= \int_\domain f(y)\d\nu(y)+\discreteMeasure\cdot\gVector 
-\varepsilon\int_{\domain} \sum_{j=1,\ldots, n}  
\exp\left(\frac{-c(x_j,y)+g(x_j)+f(y)}{\varepsilon}\right)\d\Leb(y)
\end{align*}
Finally, we obtain a dual, entropy regularized, unconstrained formulation for the Wasserstein functional 
\begin{align} \nonumber
\Wass^2_\varepsilon[\mu,\nu]
&\coloneqq \sup_{(f \in C(\domain) ,\gVector \in \R^n)} \Lagrangian^\varepsilon(\Pi(f,g),f,g) \\
\label{kantorovich_dual_reg}
&=\sup_{(f \in C(\domain) ,\gVector \in \R^n)} \left(
\discreteMeasure\cdot\gVector+\int_\domain f(y)\d\nu(y) -  \varepsilon \int_\domain   \sum_{j=1,\ldots, n} 
\exp\left(\frac{-\vert y-x_j\vert^2+ f(y) + \g_j}{\varepsilon}\right) \d \Leb(y) \right)
\end{align}
Here, the exponent $\exp(\varepsilon^{-1}(f(y) + \g_j-\vert y-x_j\vert^2))$ acts as a soft penalty in place of the original hard constraint 
$f(y) + \g_j \leq \vert y-x_j\vert^2$.
The optimal $f$ for fixed $g$ is characterized by 
\begin{align*}
0&=\partial_f\Lagrangian^\varepsilon(\Pi(f,g),f,g)(r)
=\int_\domain r(y)\d\nu(y)-\int_{\domain \times \domain}\exp\left(\frac{-c(x,y)+g(x)+f(y)}{\varepsilon}\right)r(y)\d\xi(x,y)\\
&=\int_\domain r(y)\left(\frac{\d\nu}{\d\Leb}\right)(y)\d\Leb(y)-\int_{\domain \times \domain}\exp\left(\frac{-c(x,y)+g(x)+f(y)}{\varepsilon}\right)r(y)\d\Count(x)\otimes\d\Leb(y)
\end{align*}
for all directions $r$.
Hence, we obtain $$0=\left(\frac{\d\nu}{\d\Leb}\right)(y)-\int_\domain\exp\left(\frac{-c(x,y)+g(x)+f(y)}{\varepsilon}\right)\d\Count(x),$$
and finally
\begin{align}\label{eq:dualeps}
\gVector^{C,\varepsilon}(y)\coloneqq\varepsilon\log\left(\frac{\d\nu}{\d\Leb}\right)(y) -\varepsilon\log\left(\sum_{j=1,\ldots,n}\exp\left(\frac{\g_j-\vert y-x_j\vert^2}{\varepsilon}\right)\right)
\end{align}
defines the optimal $f$ for given $\gVector\in \R^n$ and all $y\in \domain$.
Given this $C$-transform, the regularized dual formulation in \eqref{kantorovich_dual_reg} can be rewritten as
$\Wass^2_\varepsilon[\nu,\mu] = \max_{\gVector\in\R^n}\Dual^\varepsilon(\gVector)$
with 
\begin{align}\label{eq:sdot_reg_pb}
\Dual^\varepsilon[\gVector] 
&\coloneqq \Lagrangian^\varepsilon(\Pi(\gVector^{C,\varepsilon},g),\gVector^{C,\varepsilon},g) \\
&=\int_\domain \gVector^{C,\varepsilon}(y)\d\nu(y) +\gVector\cdot\discreteMeasure 
- \varepsilon \int_\domain \left(\frac{\d\nu}{\d\Leb}\right)(y) \d\Leb(y) 
= \int_\domain \gVector^{C,\varepsilon}(y)\d\nu(y) +\gVector\cdot\discreteMeasure - \varepsilon
\end{align}
A sufficient condition for a vector $\gVector\in\R^n$ to maximize \eqref{eq:sdot_reg_pb} for given sites $\sitesVector$ and masses $\discreteMeasure$ is 
\begin{align}
0=\partial_{\g_i} \Dual^\varepsilon[\gVector]  = \partial_{\g_j}\left(\int_\domain \gVector^{C,\varepsilon}(y)d\nu(y) +\gVector\cdot\discreteMeasure\right) &=
\int_\domain \partial_{\g_i} \gVector^{C,\varepsilon}(y)\d\nu(y)+m_i\\
&\text{ with }\partial_{\g_i} \gVector^{C,\varepsilon}(y) = 
- \frac{\exp\left(\frac{\g_i-\vert y-x_i\vert^2}{\varepsilon}\right)}{\sum_{j=1,\ldots,n}
\exp\left(\frac{\g_j-\vert y-x_j\vert^2}{\varepsilon}\right)}
\end{align}
for $i=1,\ldots, n$.
The set of functions $\{\chi^\epsilon_i[\sitesVector,\gVector]\}_{i=1,\ldots, n}$
with $\chi^\epsilon_i[\sitesVector,\gVector](y)= - \partial_{\g_i} \gVector^{C,\varepsilon}(y)$ for $y\in \domain$
forms a partition of unity on $\domain$. 
Furthermore,
\begin{align}
\quad \lim_{\varepsilon\rightarrow 0}
\chi_i^\varepsilon[\sitesVector,\gVector] =\chi_{\Laguerre_i[\sitesVector,\gVector]}
\end{align} 
for $i=1,\ldots,n$, where the convergence is in  $L^1(\nu)$. 
For given $[\sitesVector,\gVector]$  with 
$\varepsilon>0$, $\sitesVector=(x_1,\ldots, x_n) \in \domain^n$, $x_j\neq x_i$ for $j,\;i =1,\ldots, n$ 
\begin{align}
m_i^\varepsilon[\sitesVector,\gVector]\coloneqq\int_\domain\chi_i^\varepsilon[\sitesVector,\gVector](y)\d\nu(y), \quad 
b_i^\varepsilon[\sitesVector,\gVector]\coloneqq\int_\domain \frac{y\; \chi_i^\varepsilon[\sitesVector,\gVector](y)\d\nu(y)}{m_i^\varepsilon[\sitesVector,\gVector]}
\end{align}
define regularized masses and regularized barycenters, respectively. By the $L^1$-convergence of $\chi_i^\varepsilon[\sitesVector,\gVector]$ to $\chi_{\Laguerre_i}$ and by the compactness of $\domain$, one gets
\begin{align}\label{eq:convergenceeps}
\lim_{\varepsilon\rightarrow 0} m_i^\varepsilon[\sitesVector,\gVector] =m_i[\sitesVector,\gVector], \quad
\lim_{\varepsilon\rightarrow 0} b_i^\varepsilon[\sitesVector,\gVector]=b_i[\sitesVector,\gVector] \text{ for }  m_i[\sitesVector,\gVector] >0.
\end{align}
Consequently, we obtain the entropy-regularized cost functional 
\begin{align}\label{infosec_pob_reg}
\mathcal{F}_n^\varepsilon[\sitesVector,\gVector]\coloneqq
\sum_{i=1,\ldots, n} 
m_i^\varepsilon[\sitesVector,\gVector]\Phi(b_i^\varepsilon[\sitesVector,\gVector])
\end{align}
as an approximation of the original cost functional $\mathcal{F}_n$ on power diagrams defined in \eqref{eq:objective}. Analogously, one finally obtains the entropy-regularized cost functional with penalty parameter $\eta>0$:
\begin{align}\label{infosec_pob_reg_penalty}
\mathcal{F}_n^{\eta,\varepsilon}[\sitesVector,\gVector]\coloneqq
\mathcal{F}_n^{\varepsilon}[\sitesVector,\gVector] - \eta\mathcal{R}_n^\varepsilon[\sitesVector,\gVector],
\end{align}
where the regularized penalty term is defined as
\begin{align}\label{def:reg_eps}
\mathcal{R}_n^\varepsilon[\sitesVector,\gVector]\coloneqq\sum_{i=1,\ldots, n}\int_\domain\vert y-x_i\vert^2\chi_i^\varepsilon[\sitesVector,\gVector](y)\d\nu(y)+\sum_{1\leq i,j \leq n \atop {i\neq j}}\frac{m_i^\varepsilon[\sitesVector,\gVector]m_j^\varepsilon[\sitesVector,\gVector]}{\vert x_i-x_j\vert^2}.
\end{align}
in analogy to \eqref{def:regularizer}.

{
We are now in the position to prove the convergence of maximizers of the entropy regularized functional $\mathcal{F}_n^{\eta,\varepsilon}$ 
given in  \eqref{infosec_pob_reg_penalty} to a
maximizer of the original functional defined in \eqref{eq:objective3} for $\varepsilon\to 0$.
\begin{prpstn}\label{prop:gamma_eps}
Let $(\varepsilon_N)_N\subset\R^+$, $(\sitesVector^N,\gVector^N)_N\subset\sitesSet\times\R^n$ so that $\varepsilon_N\rightarrow 0$ and $(\sitesVector^N,\gVector^N)\rightarrow(\sitesVector,\gVector)\in\sitesSet\times\R^n$ for $N\rightarrow\infty$. Then, we have that 
\begin{align*}
\lim_{N\rightarrow\infty}\chi_i^{\varepsilon^N}[\sitesVector^N,\gVector^N]=\chi_{\Laguerre_i[\sitesVector,\gVector]},
\end{align*}
where the limit is taken in $L^1(\nu).$
\end{prpstn}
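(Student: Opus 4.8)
The plan is to deduce the $L^1(\nu)$ convergence from convergence $\nu$-almost everywhere, upgraded by dominated convergence. Throughout I abbreviate $a_j^N(y)\coloneqq \g_j^N-|y-x_j^N|^2$ and $a_j(y)\coloneqq \g_j-|y-x_j|^2$, so that the softmax formula for $\chi_i^\varepsilon=-\partial_{\g_i}\gVector^{C,\varepsilon}$ derived above reads
\[
\chi_i^{\varepsilon_N}[\sitesVector^N,\gVector^N](y)=\frac{\exp(a_i^N(y)/\varepsilon_N)}{\sum_{j=1}^n\exp(a_j^N(y)/\varepsilon_N)}=\Bigl(1+\sum_{j\neq i}\exp\bigl((a_j^N(y)-a_i^N(y))/\varepsilon_N\bigr)\Bigr)^{-1},
\]
while the hard cell is exactly $\Laguerre_i[\sitesVector,\gVector]=\{y:a_i(y)\geq a_j(y)\text{ for all }j\}$. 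The first step is to record that $a_j^N\to a_j$ \emph{uniformly} on $\domain$: since $|a_j^N(y)-a_j(y)|\leq|\g_j^N-\g_j|+|x_j-x_j^N|\,|2y-x_j-x_j^N|$ and $\domain$ is compact with $(\sitesVector^N,\gVector^N)\to(\sitesVector,\gVector)$, the right-hand side tends to $0$ uniformly in $y\in\domain$. This uniform convergence is the device that decouples the motion of the parameters from the vanishing of $\varepsilon_N$.

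Next I would fix a point $y$ at which the limiting maximum $\max_j a_j(y)$ is attained at a unique index $i^*$, and set $\delta\coloneqq a_{i^*}(y)-\max_{j\neq i^*}a_j(y)>0$. By uniform convergence, for $N$ large one has $a_{i^*}^N(y)-a_j^N(y)\geq\delta/2$ for every $j\neq i^*$, so each exponent $(a_j^N(y)-a_{i^*}^N(y))/\varepsilon_N\to-\infty$ as $\varepsilon_N\to 0$; hence $\chi_{i^*}^{\varepsilon_N}(y)\to1$ and $\chi_i^{\varepsilon_N}(y)\to0$ for $i\neq i^*$, which is precisely $\chi_{\Laguerre_i[\sitesVector,\gVector]}(y)$. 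It then remains to argue that the exceptional set where the limiting maximizer is \emph{not} unique is $\nu$-negligible. This set is contained in $\bigcup_{i\neq j}\{y:a_i(y)=a_j(y)\}$, and because the limit sites are pairwise distinct (recall $(\sitesVector,\gVector)\in\sitesSet\times\R^n$), each equation $a_i(y)=a_j(y)$ reduces to an affine equation $2(x_i-x_j)\cdot y=\g_j-\g_i+|x_i|^2-|x_j|^2$, i.e.\ a hyperplane; this is Lebesgue-null and therefore $\nu$-null since $\nu\ll\Leb$. Thus $\chi_i^{\varepsilon_N}[\sitesVector^N,\gVector^N]\to\chi_{\Laguerre_i[\sitesVector,\gVector]}$ $\nu$-a.e.

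Finally, since $0\leq\chi_i^{\varepsilon_N}\leq1$ and $\nu$ is a probability measure, the constant function $1$ is an integrable majorant, so the dominated (bounded) convergence theorem promotes the $\nu$-a.e.\ convergence to convergence in $L^1(\nu)$, which is the claim. The main obstacle is the genuinely diagonal nature of the limit: both $\varepsilon_N\to0$ and $(\sitesVector^N,\gVector^N)\to(\sitesVector,\gVector)$ occur simultaneously, so one cannot simply invoke the fixed-parameter $\varepsilon\to0$ limit stated earlier in the section. The uniform convergence of the exponents $a_j^N$ is exactly what keeps the gap argument robust under the moving parameters, and the only other delicate point is the measure-zero treatment of the tie set, for which the pairwise distinctness of the limiting sites is essential.
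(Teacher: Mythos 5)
Your proof is correct and follows essentially the same route as the paper's: a gap argument showing the softmax weights converge pointwise to $0$ or $1$ off a $\nu$-null exceptional set, followed by dominated convergence (the paper organizes the pointwise step by interior of $\Laguerre_i$ versus complement of $\overline{\Laguerre_i}$, which, for pairwise distinct sites, coincides with your unique-argmax decomposition). If anything, your write-up is slightly more careful on two points the paper leaves implicit: the uniform convergence of the exponents $a_j^N$ handling the diagonal limit, and the explicit identification of the tie set as a finite union of hyperplanes, hence Lebesgue- and $\nu$-null.
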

\begin{proof}
Let $y\in D$ be in the interior of $\Laguerre_i(\sitesVector,\gVector).$ Then, there is $\delta>0$, such that $\vert y-x_i\vert^2-\g_i\leq\vert y-x_j\vert^2-\g_j-3\delta,$ for all $j\neq i$. and by the continuity of the function 
$(\sitesVector,\gVector)\mapsto\vert y-x_i\vert^2-\g_i$ we obtain that $\vert y-x_i^N\vert^2-\g_i^N\leq\vert y-x_j^N\vert^2-\g_j^N-\delta$ for sufficiently large $N$. 
This implies that
\begin{align*}
\chi_i^\varepsilon[\sitesVector^N,\gVector^N](y)=&\frac{\exp\left(\frac{\g_i^N-\vert y-x_i^N\vert^2}{\varepsilon^N}\right)}{\sum_{j=1}^n
\exp\left(\frac{\g_j^N-\vert y-x_j^N\vert^2}{\varepsilon^N}\right)}\geq\frac{\exp\left(\frac{\g_i^N-\vert y-x_i^N\vert^2}{\varepsilon^N}\right)}{\exp\left(\frac{\g_i^N-\vert y-x_i^N\vert^2}{\varepsilon^N}\right)+(n-1)
\exp\left(\frac{\g_i^N-\vert y-x_i^N\vert^2}{\varepsilon^N}\right)\exp(-\delta/\varepsilon^N)}\\
=&\frac{1}{1+(n-1)\exp(-\delta/\varepsilon^N)}\rightarrow 1
\end{align*}
for $N\rightarrow\infty$. Now, let $y\notin\overline{\Laguerre_i(\sitesVector,\gVector)}.$ Then, there is $j\neq i$, and $\delta>0$, such that $\vert y-x_i\vert^2-\g_i\geq\vert y-x_j\vert^2-\g_j+3\delta.$ and 
$\vert y-x_i^N\vert^2-\g_i^N\geq\vert y-x_j^N\vert^2-\g_j^N+\delta$ for sufficiently large $N$. From this it follows that
\begin{align*}
\chi_i^\varepsilon[\sitesVector^N,\gVector^N](y)=&\frac{\exp\left(\frac{\g_i^N-\vert y-x_i^N\vert^2}{\varepsilon^N}\right)}{\sum_{j=1}^n
\exp\left(\frac{\g_j^N-\vert y-x_j^N\vert^2}{\varepsilon^N}\right)}\leq\frac{\exp\left(\frac{\g_i^N-\vert y-x_i^N\vert^2}{\varepsilon^N}\right)}{\exp\left(\frac{\g_j^N-\vert y-x_j^N\vert^2}{\varepsilon^N}\right)}=\exp\left(\frac{\g_i^N-\g_j^N-\vert y-x_i^N\vert^2+\vert y-x_j^N\vert^2}{\varepsilon^N}\right)\\
\leq&\exp\left(-\frac{\delta}{\varepsilon^N}\right)\rightarrow 0,
\end{align*}
as $N\rightarrow\infty$. By the absolute continuity of $\nu$ with respect to the Lebesgue measure, we have proven that $\chi_i^{\varepsilon^N}[\sitesVector^N,\gVector^N]\rightarrow\chi_{\Laguerre_i[\sitesVector,\gVector]}$, $\nu$-almost everywhere, and by the dominated convergence theorem, we obtain the claim.
\end{proof}
\notinclude{
\begin{prpstn}
Assume that $\Phi\in C(\domain)$ and $\eta>0$ 
and consider maximizers $[\sitesVector^N,\gVector^N]$ of $\mathcal{F}_n^{\eta,\varepsilon^N}$
with $\varepsilon^N\to 0$ for $N\to \infty$. Then, for every converging subsequence 
$([\sitesVector^{N_k},\gVector^{N_k}])_{k\in \N}$ with 
$[\sitesVector^{N_k},\gVector^{N_k}] \rightarrow [\sitesVector^\ast,\gVector^\ast]$ for $k\to \infty$
the limit $[\sitesVector^\ast,\gVector^\ast]$ is a maximizer of the functional $\mathcal{F}_n^\eta$.
\end{prpstn}
\begin{proof}
Using the convergence properties in \eqref{eq:convergenceeps}, the continuity of $\Phi$, 
and Proposition~\ref{prop:gamma_eps} we obtain for any fixed  $[\sitesVector,\gVector]$ that
\begin{align*}
\mathcal{F}_n^\eta[\bar \sitesVector,\bar \gVector] 
&= \lim_{k\to\infty} \mathcal{F}_n^{\eta,\varepsilon_{N_k}}[\bar \sitesVector,\bar \gVector]  
 \leq \lim_{k\to\infty} \mathcal{F}_n^{\eta,\varepsilon_{N_k}}[\sitesVector^{N_k},\gVector^{N_k}] 
= \mathcal{F}_n^\eta[\sitesVector^\ast,\gVector^\ast],
\end{align*}
Hence, $[\sitesVector^\ast,\gVector^\ast]$ maximizes $\mathcal{F}_n^\eta$.
\end{proof}
}
\notinclude{
\begin{crllr}
Let $(\varepsilon_N)_N\subset\R^+$, $(\sitesVector^N,\gVector^N)_N\subset\sitesSet\times\R^n$ so that $\varepsilon_N\rightarrow 0$ and $(\sitesVector^N,\gVector^N)\rightarrow(\sitesVector,\gVector)\in\sitesSet\times\R^n$ for $N\rightarrow\infty$. Then, we have
\begin{align*}
\limsup_{N\rightarrow\infty}\Func_n^{\eta,\varepsilon_N}(\sitesVector^N,\gVector^N)\leq\Func_n^{\eta}(\sitesVector,\gVector).
\end{align*}
If, additionally, $m_i[\sitesVector,\gVector]>0$ for $i=1,\ldots,n$, we obtain
\begin{align*}
\lim_{N\rightarrow\infty}\Func_n^{\eta,\varepsilon_N}(\sitesVector^N,\gVector^N)=\Func_n^{\eta}(\sitesVector,\gVector).
\end{align*}
\end{crllr}
\begin{proof}
Use Proposition \ref{prop:gamma_eps} and the definition of the regularizer \ref{def:reg_eps}.
\end{proof}} In preparation for the convergence of maximizers, we now state the entropy-regularized "pendant" of Theorem ~\ref{thm:maximizer}.
\begin{thrm} 
Let us assume that $\varepsilon>0$, and $\Phi\in C(\domain)$ with maximum $\bar \Phi$ on $D$.
Then, for given number of sites $l\in\N$ there exists an $n\leq l$, 
such that a maximizer $(\sitesVector^*,\gVector^*)$ of $\mathcal{F}_n^{\eta,\varepsilon}$ exists  
with  $m_i^\varepsilon[\sitesVector^*,\gVector^*]>0$ for $i=1,\ldots,n$
and  $\mathcal{F}_n^{\eta,\varepsilon}(\sitesVector^*,\gVector^*)\geq\sup_{(\sitesVector,\gVector)}\mathcal{F}_l^{\eta,\varepsilon}(\sitesVector,\gVector)$.
\end{thrm}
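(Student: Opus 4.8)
The plan is to follow the two-part structure of the proof of Theorem~\ref{thm:maximizer} almost verbatim, replacing the hard Laguerre indicators $\chi_{\Laguerre_i[\sitesVector,\gVector]}$ together with the masses $m_i[\sitesVector,\gVector]$ and barycenters $b_i[\sitesVector,\gVector]$ by their entropic counterparts $\chi_i^\varepsilon[\sitesVector,\gVector]$, $m_i^\varepsilon$ and $b_i^\varepsilon$, and then checking that every estimate used there survives the substitution. Since $\varepsilon>0$ is fixed, no limit in $\varepsilon$ is taken and all quantities stay smooth; a useful simplification is that for any configuration with distinct sites the softmax weights satisfy $\chi_i^\varepsilon>0$, so $m_i^\varepsilon>0$ always, and cells can collapse only in a limit.

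For the value-comparison part I would start from a maximizing sequence $(\sitesVector^N,\gVector^N)_N \subset \mathcal{X}_l \times \R^l$ for $\mathcal{F}_l^{\eta,\varepsilon}$. Because $m_i^\varepsilon[\sitesVector^N,\gVector^N]\in[0,1]$ and, being $\nu$-weighted averages over the convex set $\domain$, the barycenters $b_i^\varepsilon[\sitesVector^N,\gVector^N]\in\domain$ are bounded, I can pass to a subsequence along which all masses and barycenters converge to limits $m_i^*$ and $b_i^*$, and reorder so that $m_i^*>0$ exactly for $i\le n$. Splitting the sums in $\mathcal{F}_l^{\varepsilon}$ and $\mathcal{R}_l^{\varepsilon}$ into kept indices $i\le n$ and dropped indices $i>n$, the dropped functional terms obey $m_i^\varepsilon\Phi(b_i^\varepsilon)\le m_i^\varepsilon\bar\Phi\to 0$ while the dropped penalty terms are non-negative and may only be discarded in our favour. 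This reproduces the chain of inequalities of Theorem~\ref{thm:maximizer} and gives $\sup_{(\sitesVector,\gVector)}\mathcal{F}_l^{\eta,\varepsilon}\le\sup\mathcal{F}_n^{\eta,\varepsilon}$, so it remains to produce a genuine $n$-site maximizer.

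For the existence part I would take a maximizing sequence for $\mathcal{F}_n^{\eta,\varepsilon}$ with $m_i^*>0$ for all $i\le n$ and, after re-indexing, $m_i^\varepsilon\ge\tfrac12 m_i^*$ along the tail, and extract the same a priori bounds from the penalty $\mathcal{R}_n^\varepsilon$ as before. Since $\chi_i^\varepsilon\ge 0$ and $|y-x_i|\ge\dist(x_i,\domain)$ for $y\in\domain$, the first penalty term still dominates $\dist^2(x_i,\domain)\,m_i^\varepsilon$, bounding $\dist(x_i,\domain)$, while the second bounds $|x_i-x_j|$ from below. For the weights I would use the gauge invariance $\gVector\mapsto\gVector+\lambda\Id_n$ of the softmax (numerator and denominator scale by the same factor), fix $\g_1=0$, and note that $\g_j^N\to+\infty$ would force $\chi_1^\varepsilon(y)\to 0$ pointwise and hence $m_1^\varepsilon\to 0$, whereas $\g_j^N\to-\infty$ would force $m_j^\varepsilon\to 0$, both contradicting $m_i^*>0$. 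The resulting feasible set is compact and bounded away from site collisions, where $\mathcal{F}_n^{\eta,\varepsilon}$ is continuous, so the Weierstra{\ss} extreme value theorem yields the maximizer $(\sitesVector^*,\gVector^*)$, which attains at least $\sup_l\mathcal{F}_l^{\eta,\varepsilon}$.

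The main obstacle, and the only genuine departure from the unregularized argument, is the global coupling introduced by the softmax normalization: each $\chi_i^\varepsilon$ depends simultaneously on all sites and weights, so both the ``dropping'' of a vanishing cell in the first part and the isolation of a single divergent weight in the second must be argued through the shared denominator $\sum_j\exp((\g_j-|y-x_j|^2)/\varepsilon)$ rather than through the local cell geometry used in Theorem~\ref{thm:maximizer}. The key fact making this benign is that $m_j^\varepsilon=\|\chi_j^\varepsilon\|_{L^1(\nu)}$, so $m_j^\varepsilon\to 0$ is exactly $\chi_j^\varepsilon\to 0$ in $L^1(\nu)$; passing to an a.e.\ convergent subsequence then removes the dropped terms from the denominator, so the surviving masses and barycenters converge to those of the reduced $n$-site problem. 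Once this decoupling is in place, the remaining estimates and the compactness/continuity argument are routine adaptations of the proof of Theorem~\ref{thm:maximizer}.
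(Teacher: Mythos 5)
Your proposal is correct and follows essentially the same route as the paper, whose proof of this theorem is literally ``analogous to Theorem~\ref{thm:maximizer}'': the same two-part argument (value comparison by dropping vanishing-mass cells, then a priori bounds from the penalty plus gauge-fixing $\g_1=0$ and Weierstra{\ss}). Your third paragraph, addressing the softmax coupling via the domination $0\le\chi_i^\varepsilon[\text{truncated}]-\chi_i^\varepsilon[\text{full}]\le\sum_{j>n}\chi_j^\varepsilon[\text{full}]$ so that dropped masses control the perturbation of the kept cells, is a point the paper glosses over but is exactly the right justification for the ``analogous'' step.
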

\begin{proof}
The proof is analogous to Theorem \ref{thm:maximizer}.
\notinclude{
To this end, consider a maximizing sequence for $\mathcal{F}_l^{\eta,\varepsilon}$. 
Since $m_i^\varepsilon[\sitesVector^N,\gVector^N]\in[0,1],$ and $b_i^\varepsilon[\sitesVector^N,\gVector^N]\in\domain$, we may assume that up to the selection of a subsequence the sequences of masses and barycenters converge to limits $m^*_i$ and $b^*_i$, respectively, and that there exists an $n\leq l$ with $m^*_i>0$ if and only if 
$i \leq n$. Then, we obtain
\begin{align*}
\sup_{(\sitesVector,\gVector)}\mathcal{F}_l^{\eta,\varepsilon}(\sitesVector,\gVector)&=\lim_{N\rightarrow\infty}\mathcal{F}_l^{\eta,\varepsilon}(\sitesVector^N,\gVector^N)\leq\limsup_{N\rightarrow\infty}\mathcal{F}_l^{\varepsilon}(\sitesVector^N,\gVector^N)-\eta\liminf_{N\rightarrow\infty}\mathcal{R}_l^\varepsilon(\sitesVector^N,\gVector^N)\\
&\leq\limsup_{N\rightarrow\infty}\mathcal{F}_n^\varepsilon(\sitesVector^N,\gVector^N)+\limsup_{N\rightarrow\infty}\sum_{i=n+1,\ldots, l} m_i^\varepsilon[\sitesVector^N,\gVector^N]\Phi(b_i^\varepsilon[\sitesVector^N,\gVector^N])-\eta\liminf_{N\rightarrow\infty}\mathcal{R}_n^\varepsilon(\sitesVector^N,\gVector^N)\\
&\quad -\eta\liminf_{N\rightarrow\infty}
\left(\mathcal{R}_l^\varepsilon(\sitesVector^N,\gVector^N)-\mathcal{R}_n^\varepsilon(\sitesVector^N,\gVector^N)\right)\\
&=\lim_{N\rightarrow\infty}\mathcal{F}_n^\varepsilon(\sitesVector^N,\gVector^N)+0-\eta\lim_{N\rightarrow\infty}\mathcal{R}_n^\varepsilon(\sitesVector^N,\gVector^N)-\eta\liminf_{N\rightarrow\infty}\left(\mathcal{R}_l^\varepsilon(\sitesVector^N,\gVector^N)-\mathcal{R}_n^\varepsilon(\sitesVector^N,\gVector^N)\right)\\
&\leq\lim_{N\rightarrow\infty}\mathcal{F}_n^\varepsilon(\sitesVector^N,\gVector^N)-\eta\lim_{N\rightarrow\infty}\mathcal{R}_n^\varepsilon(\sitesVector^N,\gVector^N)=\lim_{N\rightarrow\infty}\mathcal{F}_n^\varepsilon(\sitesVector^N,\gVector^N)-\eta\mathcal{R}_n^\varepsilon(\sitesVector^N,\gVector^N)\\
&=\lim_{N\rightarrow\infty}\mathcal{F}_n^{\eta,\varepsilon}(\sitesVector^N,\gVector^N).
\end{align*}
Hence, a maximizer of $\mathcal{F}_n^{\eta,\varepsilon}$, if it exists, attains a greater or equal objective value than a maximizer of 
$\mathcal{F}_l^{\eta,\varepsilon}$. Now, let $(\sitesVector^N,\gVector^N)_N\subset\sitesSet\times\R^n$ be a maximizing sequence for $\mathcal{F}_n^{\eta,\varepsilon}$ and in analogy to before assume that $m_i^\varepsilon[\sitesVector^N,\gVector^N]$ and $b_i^\varepsilon[\sitesVector^N,\gVector^N]$ 
have limits $m^*_i>0$, and $b^*_i\in D$ for $i=1,\ldots, n$, that $\mathcal{F}_n^{\eta,\varepsilon}(\sitesVector^N,\gVector^N)$  is monotonically increasing in $N$ and that $m_i^\varepsilon[\sitesVector^N,\gVector^N]\geq\tfrac12 m_i^*$. Consequently, for $\bar \Phi$ being the maximal value of $\Phi$ on $D$ the estimate
\begin{align}
\mathcal{F}_n^{\eta,\varepsilon}(\sitesVector^0,\gVector^0) &\leq \nu(D) \bar \Phi - 
\eta\left(\sum_{i=1,\ldots, n}\int_\domain\vert y-x_i^N\vert^2\chi^\varepsilon_{\Laguerre_i[\sitesVector^N,\gVector^N]}\d\nu(y)+\sum_{1\leq i,j \leq n\atop i\neq j}\frac{m_i^\varepsilon[\sitesVector^N,\gVector^N]m_j^\varepsilon[\sitesVector^N,\gVector^N]}{\vert x_i^N-x_j^N\vert^2}\right) \nonumber\\
&\leq \bar \Phi -\eta\left(\frac12\sum_{i=1,\ldots,n}\dist^2(x_i^N,\domain)m_i^*+\frac14\sum_{1\leq i,j \leq n \atop i\neq j}\frac{m_i^*m_j^*}{\vert x_i^N-x_j^N\vert^2}\right)
\end{align}
is obtained. This implies the following a priori bounds:
\begin{align*}
\dist(x_i^N,\domain) \leq \sqrt{ \frac{2(\bar \Phi - \mathcal{F}_n^{\eta,\varepsilon}(\sitesVector^0,\gVector^0))}{\eta m_i^*}} ,\quad
\vert x_i^N -x_j^N\vert \geq \sqrt{\frac{\eta m_i^*m_j^*}{4( \bar \Phi - \mathcal{F}_n^{\eta,\varepsilon}(\sitesVector^0, \gVector^0))}}
\end{align*}
for all $N$. For the uniform bound on $\gVector$, recall that $\gVector$ and $\gVector+\lambda\Id_n$ both result on the same Laguerre diagram. Hence, we may assume without loss of generality that $\g_1^N=0$ for all $N\in\N$. 
Then,  $\lim_{N\rightarrow\infty}\g_j^N=\infty$ for $j=2,\ldots,n$ 
would imply $\lim_{N\rightarrow\infty} m_1^\varepsilon[\sitesVector^N,\gVector^N]=0$, which contradicts our choice of $n$.
Similarly, $\lim_{N\rightarrow\infty}\g_j=-\infty$ would imply $\lim_{N\rightarrow\infty} m_j^\varepsilon[\sitesVector^N,\gVector^N]=0$, 
which again is a contradiction.
Hence, $\vert\gVector^N\vert\leq C$ for some $C>0$ and all $N\in \N$. 
Finally, given these a priori bounds the existence of a maximizer of $\mathcal{F}_n^{\eta,\varepsilon}$ follows directly from the Weierstra{\ss} extreme value theorem.}
\end{proof}

We are now in the position to prove the convergence of maximizers of the entropy regularized functional $\mathcal{F}_n^{\eta,\varepsilon}$ 
given in  \eqref{infosec_pob_reg_penalty} to a
maximizer of the original functional $\mathcal{F}_n^\eta$ defined in \eqref{eq:objective3} for $\epsilon\to 0$.
\begin{thrm}\label{thm:max_conv_eps}
Let $\Phi\in C(\domain),$ $l\in\N$ and $\eta>0$. For each $N\in\N$, consider maximizers $(\sitesVector^N,\gVector^N)\in\mathcal{X}_{\beta(N)}\times\R^{\beta(N)}$, where $\beta:\N\rightarrow\{1,\ldots,n\}$, so that $\Func_{\beta(N)}^{\eta,\varepsilon_N}(\sitesVector^N,\gVector^N)\geq\max_{k=1,\ldots,n}\{\sup_{(\sitesVector,\gVector)\in\mathcal{X}_k\times\R^k}\Func_k^{\eta,\varepsilon_N}(\sitesVector,\gVector)\}$.
Then, there is $n\in\{1,\ldots,l\}$ such that up to the selection of a subsequence $(\sitesVector^{N},\gVector^{N})_{N\in\N}\subset\sitesSet\times\R^n$ converges to a limit $(\sitesVector^*,\gVector^*)\in\sitesSet\times\R^{n}$ for which it holds $$\Func_{n}^{\eta}(\sitesVector^*,\gVector^*)\geq\max_{k=1,\ldots,l}\left\{\sup_{(\sitesVector,\gVector)\in\mathcal{X}_k\times\R^k}\Func_k^{\eta}(\sitesVector,\gVector)\right\}.$$
\end{thrm}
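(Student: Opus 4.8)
The plan is to run a stability-of-maximizers argument of $\Gamma$-convergence type, sandwiching the limit of the regularized maximal values between the unregularized supremum (from below) and the value of the limit configuration (from above). First I would reduce to a fixed number of sites: since $\beta(N)\in\{1,\ldots,l\}$ takes finitely many values, the pigeonhole principle gives a subsequence along which $\beta(N)=p$ is constant, so each maximizer lies in $\mathcal{X}_p\times\R^p$, and by the entropy-regularized pendant of Theorem~\ref{thm:maximizer} it may be taken with all masses $m_i^{\varepsilon_N}[\sitesVector^N,\gVector^N]>0$. The masses $m_i^{\varepsilon_N}\in[0,1]$ and the barycenters $b_i^{\varepsilon_N}\in\domain$ live in compact sets, so after a further subsequence they converge to $m_i^*$ and $b_i^*$; relabeling, I assume $m_i^*>0$ exactly for $i\leq n$, and this $n\leq p\leq l$ is the one in the statement. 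To gain compactness of the \emph{effective} sites and weights I reuse the a priori estimates from the proof of Theorem~\ref{thm:maximizer}, noting that they depend only on $\eta$, on $\bar\Phi$, on the limiting masses $m_i^*$, and on a \emph{uniform} lower bound for the maximal values. Such a bound follows by comparison with a fixed admissible test configuration $(\hat\sitesVector,\hat\gVector)$ of $k_0$ sites with positive masses, for which $\Func_{k_0}^{\eta,\varepsilon_N}(\hat\sitesVector,\hat\gVector)\to\Func_{k_0}^{\eta}(\hat\sitesVector,\hat\gVector)$ by \eqref{eq:convergenceeps}, Proposition~\ref{prop:gamma_eps} and continuity of $\Phi$; since the maximal value dominates it, $\bar\Phi-\Func_p^{\eta,\varepsilon_N}(\sitesVector^N,\gVector^N)$ is bounded uniformly in $N$. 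Feeding this into $\int_\domain|y-x_i^N|^2\chi_i^{\varepsilon_N}\,\d\nu\geq\dist^2(x_i^N,\domain)\,m_i^{\varepsilon_N}$ and the pairwise penalty term bounds $\dist(x_i^N,\domain)$ from above and $|x_i^N-x_j^N|$ from below for $i,j\leq n$, so $x_i^N\to x_i^*$ with the $x_i^*$ pairwise distinct; normalizing $\g_1^N=0$ and arguing as in Theorem~\ref{thm:maximizer} yields $\gVector^N\to\gVector^*$ on the effective components. Thus $(\sitesVector^*,\gVector^*)\in\sitesSet\times\R^n$, and Proposition~\ref{prop:gamma_eps} identifies $m_i^*=m_i[\sitesVector^*,\gVector^*]>0$ and $b_i^*=b_i[\sitesVector^*,\gVector^*]$.

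For the lower bound on the maximal values I compare with an arbitrary fixed positive-mass competitor. For $(\sitesVector,\gVector)\in\mathcal{X}_k\times\R^k$ with all masses positive one has $\Func_k^{\eta,\varepsilon_N}(\sitesVector,\gVector)\to\Func_k^{\eta}(\sitesVector,\gVector)$ as above, while maximality gives $\Func_p^{\eta,\varepsilon_N}(\sitesVector^N,\gVector^N)\geq\Func_k^{\eta,\varepsilon_N}(\sitesVector,\gVector)$; hence $\liminf_N\Func_p^{\eta,\varepsilon_N}(\sitesVector^N,\gVector^N)\geq\Func_k^{\eta}(\sitesVector,\gVector)$. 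Taking the supremum over positive-mass competitors and the maximum over $k\in\{1,\ldots,l\}$, and observing that configurations with vanishing cells reduce to competitors with fewer sites (both $\Func_k$ and $\mathcal{R}_k$ sum only over positive-mass indices), I obtain $\liminf_N\Func_p^{\eta,\varepsilon_N}(\sitesVector^N,\gVector^N)\geq\max_{k=1,\ldots,l}\sup_{(\sitesVector,\gVector)}\Func_k^{\eta}(\sitesVector,\gVector)$.

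For the upper bound I pass to the limit in $\Func_p^{\eta,\varepsilon_N}=\Func_p^{\varepsilon_N}-\eta\mathcal{R}_p^{\varepsilon_N}$. In the reward part the effective terms converge, $m_i^{\varepsilon_N}\Phi(b_i^{\varepsilon_N})\to m_i^*\Phi(b_i^*)$, while each non-effective term tends to $0$ since $m_i^{\varepsilon_N}\to0$ and $\Phi(b_i^{\varepsilon_N})$ stays bounded (barycenters remain in the compact convex $\domain$); hence $\Func_p^{\varepsilon_N}(\sitesVector^N,\gVector^N)\to\Func_n(\sitesVector^*,\gVector^*)$. In the penalty part every summand is nonnegative, and the effective summands converge to the corresponding terms of $\mathcal{R}_n(\sitesVector^*,\gVector^*)$ using the $L^1(\nu)$-convergence of $\chi_i^{\varepsilon_N}$ from Proposition~\ref{prop:gamma_eps}, the uniform convergence $|y-x_i^N|^2\to|y-x_i^*|^2$ on $\domain$, and $x_i^*\neq x_j^*$; therefore $\liminf_N\mathcal{R}_p^{\varepsilon_N}(\sitesVector^N,\gVector^N)\geq\mathcal{R}_n(\sitesVector^*,\gVector^*)$. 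Combining, $\limsup_N\Func_p^{\eta,\varepsilon_N}(\sitesVector^N,\gVector^N)\leq\Func_n(\sitesVector^*,\gVector^*)-\eta\mathcal{R}_n(\sitesVector^*,\gVector^*)=\Func_n^{\eta}(\sitesVector^*,\gVector^*)$. Chaining the two bounds through $\liminf\leq\limsup$ yields $\Func_n^{\eta}(\sitesVector^*,\gVector^*)\geq\max_{k=1,\ldots,l}\sup_{(\sitesVector,\gVector)}\Func_k^{\eta}(\sitesVector,\gVector)$, which is the claim.

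The main obstacle is the bookkeeping for the \emph{vanishing-mass} indices $i>n$: their sites may drift to infinity and pairs among them may collide, so no compactness is available there. The resolution is that these indices never need to be controlled — their reward contributions vanish because the masses do and $\Phi$ is bounded, and their penalty contributions, being nonnegative, can only strengthen the $\liminf$ inequality for $\mathcal{R}_p^{\varepsilon_N}$. The second delicate point is making the a priori bounds uniform in $\varepsilon_N$, which is handled by the fixed-test-configuration lower bound on the maximal values together with the convergence $\Func_k^{\eta,\varepsilon_N}\to\Func_k^{\eta}$ on positive-mass configurations.
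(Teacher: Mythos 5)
Your proposal is correct and follows essentially the same route as the paper's proof: pigeonhole reduction to a constant $\beta$ along a subsequence, identification of the effective indices via converging masses, penalty-based a priori bounds (obtained by comparison with positive-mass competitors, for which $\Func_k^{\eta,\varepsilon_N}\to\Func_k^{\eta}$) yielding compactness of the effective sites and weights, and the same treatment of vanishing cells, whose reward terms tend to zero and whose nonnegative penalty terms are simply dropped. The only differences are organizational --- you split the paper's single chain of inequalities into a separate lower bound (against competitors) and upper bound (at the limit configuration), and you derive the uniform penalty bound from one fixed test configuration rather than from each competitor --- which does not change the substance of the argument.
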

\begin{proof}
The proof of the first statement follows the proof of Theorem \ref{thm:maximizer} very closely, albeit with a few key differences, which will be highlighted now: Let $n'\in\{1,\ldots,l\}$, such that $\vert\{N:\beta(N)=n'\}\vert=\infty$. Choose a subsequence such that up to relabeling of indices $(\sitesVector^N,\gVector^N)_{N\in\N}\subset\mathcal{X}_{n'}\times\R^{n'}$, such that the sequence of masses $(m_1^{\varepsilon_N}[\sitesVector^N,\gVector^N],\ldots,m_{n'}^{\varepsilon_N}[\sitesVector^N,\gVector^N])_{N\in\N}$ converges to some $(m_1^*,\ldots,m_{n'}^*)\in[0,1]^{n'}$. Moreover, assume without loss of generality that there is $n\in\{1,\ldots,n'\}$ with $m_i^*>0$ if and only if $i\leq n$, and $m_i^{\varepsilon_N}[\sitesVector^N,\gVector^N]\geq\tfrac12m_i^*$ for all $N\in\N$, $i\leq n$. It then holds for all $k \in\{1,\ldots,l\}$ and all $(\overline\sitesVector,\overline\gVector)\in\mathcal{X}_k\times\R^k$ with $m_i[\bar\sitesVector,\bar\gVector]>0$ for $i=1,\ldots,k$:
\begin{align*}\Func_k^\eta[\overline\sitesVector,\overline\gVector]&=\lim_{N\rightarrow\infty}\Func_k^{\eta,\varepsilon_N}[\overline\sitesVector,\overline\gVector]\leq\limsup_{N\rightarrow\infty}\Func_{n'}^{\eta,\varepsilon_N}[\sitesVector^N,\gVector^N]\leq\limsup_{N\rightarrow\infty}\mathcal{F}_{n'}^{\varepsilon_N}(\sitesVector^N,\gVector^N)-\eta\liminf_{N\rightarrow\infty}\mathcal{R}_{n'}^{\varepsilon_N}(\sitesVector^N,\gVector^N)\\
&\leq\limsup_{N\rightarrow\infty}\mathcal{F}_n^{\varepsilon_N}(\sitesVector^N,\gVector^N)+\limsup_{N\rightarrow\infty}\sum_{i=n+1,\ldots, n'} m_i^{\varepsilon_N}[\sitesVector^N,\gVector^N]\Phi(b_i^{\varepsilon_N}[\sitesVector^N,\gVector^N])-\eta\liminf_{N\rightarrow\infty}\mathcal{R}_n^{\varepsilon_N}(\sitesVector^N,\gVector^N)\\
&\quad -\eta\liminf_{N\rightarrow\infty}
\left(\mathcal{R}_{n'}^{\varepsilon_N}(\sitesVector^N,\gVector^N)-\mathcal{R}_n^{\varepsilon_N}(\sitesVector^N,\gVector^N)\right)\\
&=\limsup_{N\rightarrow\infty}\mathcal{F}_n^{\varepsilon_N}(\sitesVector^N,\gVector^N)+0-\eta\liminf_{N\rightarrow\infty}\mathcal{R}_n^{\varepsilon_N}(\sitesVector^N,\gVector^N)-\eta\liminf_{N\rightarrow\infty}\left(\mathcal{R}_{n'}^{\varepsilon_N}(\sitesVector^N,\gVector^N)-\mathcal{R}_n^{\varepsilon_N}(\sitesVector^N,\gVector^N)\right)\\
&\leq\limsup_{N\rightarrow\infty}\mathcal{F}_n^{\varepsilon_N}(\sitesVector^N,\gVector^N)-\eta\liminf_{N\rightarrow\infty}\mathcal{R}_n^{\varepsilon_N}(\sitesVector^N,\gVector^N)\leq\bar\Phi-\eta\liminf_{N\rightarrow\infty}\mathcal{R}_n^{\varepsilon_N}(\sitesVector^N,\gVector^N).
\end{align*}
We obtain $\liminf_{N\rightarrow\infty}\mathcal{R}_n^{\varepsilon_N}(\sitesVector^N,\gVector^N)\leq\frac{\bar\Phi-\Func_k^\eta[\overline\sitesVector,\overline\gVector]}{\eta}$. Without loss of generality, we can assume that for all $\delta>0$, $\mathcal{R}_n^{\varepsilon_N}(\sitesVector^N,\gVector^N)<\frac{\bar\Phi-\Func_k^\eta[\overline\sitesVector,\overline\gVector]+\delta}{\eta}$ holds for all $N\in\N$, $i=1,\ldots,l$. Then, we obtain the bounds
\begin{align*}
\dist(x_i^N,\domain) < \sqrt{ \frac{2(\bar \Phi - \mathcal{F}_k^\eta[\bar\sitesVector,\bar\gVector]+\delta)}{\eta m_i^*}} ,\quad
\vert x_i^N -x_j^N\vert > \sqrt{\frac{\eta m_i^*m_j^*}{4( \bar \Phi - \mathcal{F}_k^\eta[\bar\sitesVector, \bar\gVector]+\delta)}},
\end{align*}
for all pairwise different $i,j=1,\ldots,n$. Using the same arguments as in the proof of Theorem \ref{thm:maximizer}, one also finds analogous uniform bounds on $\vert\g^N\vert$. Hence, up to a subsequence, $(\sitesVector^N,\gVector^N)_{N\in\N}$ converges to some $(\sitesVector^*,\gVector^*)\in\sitesSet\times\R^n$. By the above computations, we finally obtain
\begin{align*}
\Func_k^\eta[\overline\sitesVector,\overline\gVector]\leq\lim_{N\rightarrow\infty}\mathcal{F}_n^{\varepsilon_N}(\sitesVector^N,\gVector^N)-\eta\mathcal{R}_n^{\varepsilon_N}(\sitesVector^N,\gVector^N)=\lim_{N\rightarrow\infty}\mathcal{F}_n^{\eta,\varepsilon_N}(\sitesVector^N,\gVector^N)=\mathcal{F}_n^{\eta}(\sitesVector^*,\gVector^*).
\end{align*}
For the last equality, we used the fact that the masses $m_i^{\varepsilon_N}$ do not vanish in the limit and Proposition \ref{prop:gamma_eps} for the convergence of masses $m^{\varepsilon_N}_i[\sitesVector^N,\gVector^N]\to m_i[\sitesVector^*,\gVector^*]$. For the convergence of barycenters $b^{\varepsilon_N}_i[\sitesVector^N,\gVector^N]\to b_i[\sitesVector^*,\gVector^*]$, we additionally use the fact that the function $y\mapsto y$ is bounded in $\domain$. Finally, for the convergence of $$\int_\domain\vert y-x_i^N\vert^2\chi_i^{\varepsilon_N}[\sitesVector^N,\gVector^N](y)\d\nu(y)\to\int_\domain\vert y-x_i^*\vert^2\chi_i[\sitesVector^*,\gVector^*](y)\d\nu(y),$$
we also use the fact that the family of functions $(y\mapsto\vert y-x_i^N\vert^2)_{N\in\N}$ is uniformly bounded in $L^\infty(\nu)$ and converges in $L^\infty(\nu)$ to $y\mapsto\vert y-x_i^*\vert^2$, and the uniform boundedness principle.
\end{proof}
To numerically implement the maximization of \ref{infosec_pob_reg} or \ref{infosec_pob_reg_penalty} via a gradient ascent approach we 
need to compute the gradient of $\mathcal{F}_n^\varepsilon$. Using $z=x_k$ or $z=\g_k$ for $k=1,\ldots, n$ we obtain
\begin{align*}
\partial_{z}\mathcal{F}_n^\varepsilon[\sitesVector,\gVector] &= 
\sum_{j=1,\ldots, n} 
\partial_z m_j^\varepsilon[\sitesVector,\gVector] \Phi(b_j^\varepsilon[\sitesVector,\gVector])
+  m_j^\varepsilon[\sitesVector,\gVector] \partial_z \Phi(b_j^\varepsilon[\sitesVector,\gVector]),\\
\partial_{x_k}\mathcal{R}_n^\varepsilon[\sitesVector,\gVector] &=
\int_\domain 2(x_k-y)\chi_k^\varepsilon[\sitesVector,\gVector](y)\d\nu(y)+\sum_{i=1,\ldots, n}\int_\domain\vert y-x_i\vert^2\partial_{x_k}\chi_i^\varepsilon[\sitesVector,\gVector](y)\d\nu(y)\\
&+\sum_{1\leq i,j \leq n \atop {i\neq j}}\left(\frac{\partial_{x_k}m_i^\varepsilon[\sitesVector,\gVector]m_j^\varepsilon[\sitesVector,\gVector]}{\vert x_i-x_j\vert^2}+\frac{m_i^\varepsilon[\sitesVector,\gVector]\partial_{x_k}m_j^\varepsilon[\sitesVector,\gVector]}{\vert x_i-x_j\vert^2}\right.\\
&-\left.2\frac{m_i^\varepsilon[\sitesVector,\gVector]m_j^\varepsilon[\sitesVector,\gVector]}{\vert x_i-x_j\vert^4}((x_i-x_j)\delta_{ik}+(x_j-x_i)\delta_{jk})\right) ,\\
\partial_{\g_k}\mathcal{R}_n^\varepsilon[\sitesVector,\gVector] &=
\sum_{i=1,\ldots, n}\int_\domain\vert y-x_i\vert^2\partial_{\g_k}\chi_i^\varepsilon[\sitesVector,\gVector](y)\d\nu(y)\\
&+\sum_{1\leq i,j \leq n \atop {i\neq j}}\left(\frac{\partial_{\g_k}m_i^\varepsilon[\sitesVector,\gVector]m_j^\varepsilon[\sitesVector,\gVector]}{\vert x_i-x_j\vert^2}+\frac{m_i^\varepsilon[\sitesVector,\gVector]\partial_{\g_k}m_j^\varepsilon[\sitesVector,\gVector]}{\vert x_i-x_j\vert^2}\right) ,\\
\partial_z \Phi(b_j^\varepsilon[\sitesVector,\gVector]) &= \nabla \Phi(b_j^\varepsilon[\sitesVector,\gVector]) 
\cdot \partial_z b_j^\varepsilon[\sitesVector,\gVector], \\
\partial_z m_j^\varepsilon[\sitesVector,\gVector] &= 
\int_\domain \partial_z  \chi_j^\varepsilon[\sitesVector,\gVector](y) \d\nu(y), \\
\partial_z b_j^\varepsilon[\sitesVector,\gVector] &=
\frac{1}{m_j^\varepsilon[\sitesVector,\gVector]} 
\int_\domain y \partial_z  \chi_j^\varepsilon[\sitesVector,\gVector](y) \d\nu(y)  
- \frac{b_j^\varepsilon[\sitesVector,\gVector]  \partial_z m_j^\varepsilon[\sitesVector,\gVector]}{m_j^\varepsilon[\sitesVector,\gVector]^2},\\
\partial_{x_k} \chi_j^\varepsilon[\sitesVector,\gVector](y) &=
\tfrac{2 x_j}{\varepsilon} \left(\chi_j^\varepsilon[\sitesVector,\gVector](y) -\delta_{kj}\right)  \chi_k^\varepsilon[\sitesVector,\gVector](y), \\
\partial_{\g_k} \chi_j^\varepsilon[\sitesVector,\gVector](y) &=
-\tfrac{1}{\varepsilon} \left(\chi_j^\varepsilon[\sitesVector,\gVector](y) -\delta_{kj}\right)  \chi_k^\varepsilon[\sitesVector,\gVector](y).
\end{align*}
Hence, since the gradient of $\powerDiagC^\varepsilon$ has the form of an integral with respect to $\nu$, a stochastic gradient ascent is a very suitable method to numerically obtain an optimizer of this problem: since evaluating the integral of gradients becomes very expensive, one can save computational costs at every iteration  by sampling the measure $\nu$, and by approximating the integral by the sum of the integrand evaluated at the samples' values, cf. \cite{Bo07}. This method has the advantage of not having to discretize the measure $\nu$ in space, which might lead to further inaccuracies.
Alternatively, to implement the maximization of \ref{infosec_pob_reg} numerically, we have to further discretize the continuous given measure $\nu$ in space.
\section{Spatial discretization}\label{sec:space}
As the next step, we have to discretize our optimization problem in space. To this end,
we restrict to the domain $D=[0,1]^d$ and consider a dyadic mesh with grid size $h=2^{-N}$ for $N\in \N$.
The domain is subdivided into cells 
$D^\alpha_h = \large\times_{1,\ldots, d} [\alpha_j h, (\alpha_j+1)h]$
where $\alpha$ is a multi-index in $(\mathcal{I}^h)^d$ for the index set 
$\mathcal{I}^h = \{0, \ldots, 2^N-1\}$.
We consider a discrete measure
$\nu^h = \sum_{\alpha\in \mathcal{I}^d} \nu_\alpha \delta_{y^\alpha_h}$
with $\nu_\alpha = \nu(D^\alpha_h)$, 
where  $y^\alpha_h = \left((\alpha_j+\tfrac12)h\right)_{j=1,\ldots, d}$
are the cell centers.
This discretization ansatz gives rise to discrete counterparts of the continuous, regularized
characteristic functions $\chi_i^\varepsilon[\sitesVector,\gVector]$, masses $m_i^\varepsilon[\sitesVector,\gVector]$,
and barycenters $b_i^\varepsilon[\sitesVector,\gVector]$:
\begin{align}
\chi_{i,\alpha}^{\varepsilon,h}[\sitesVector,\gVector]&\coloneqq\chi_{i}^{\varepsilon}[\sitesVector,\gVector](y^\alpha_h)=
\frac{\exp\left(\frac{\g_i-\vert y^\alpha_h-x_i\vert^2}{\varepsilon}\right)}
{\sum_{j=1,\ldots,n}\exp\left(\frac{\g_j-\vert y^\alpha_h-x_j\vert^2}{\varepsilon}\right)},\\
m_i^{\varepsilon,h}[\sitesVector,\gVector] &\coloneqq
\sum_{\alpha\in \mathcal{I}^d} \chi_{i,\alpha}^{\varepsilon,h}[\sitesVector,\gVector]\nu^\alpha, \quad 
b_i^{\varepsilon,h}[\sitesVector,\gVector]\coloneqq
\sum_{\alpha\in \mathcal{I}^d} \frac{y^\alpha_h\; \chi_{i,\alpha}^{\varepsilon,h}[\sitesVector,\gVector]} 
{m_i^{\varepsilon,h}[\sitesVector,\gVector]} \nu^\alpha.
\end{align}
Based on this discretization, we obtain a discrete functional 
\begin{align}\label{funcepsh}
\mathcal{F}_n^{\varepsilon,h}[\sitesVector,\gVector]\coloneqq
\sum_{i=1,\ldots, n} 
m_i^{\varepsilon,h}[\sitesVector,\gVector]\Phi(b_i^{\varepsilon,h}[\sitesVector,\gVector]),
\end{align}
and the usual penalty-enhanced discrete functional 
\begin{align}\label{def:func_discrete}
\mathcal{F}_n^{\eta,\varepsilon,h}[\sitesVector,\gVector]\coloneqq
\sum_{i=1,\ldots, n} 
m_i^{\varepsilon,h}[\sitesVector,\gVector]\Phi(b_i^{\varepsilon,h}[\sitesVector,\gVector])-\eta\mathcal{R}_n^{\varepsilon,h}[\sitesVector,\gVector],
\end{align}
where the discrete penalty term is defined by
\begin{align}\label{def:penalty_discrete}
\mathcal{R}_n^{\varepsilon,h}[\sitesVector,\gVector]\coloneqq\sum_{i=1,\ldots, n}\sum_{\alpha\in \mathcal{I}^d}\vert y^\alpha_h-x_i\vert^2 \chi_{i,\alpha}^{\varepsilon,h}[\sitesVector,\gVector] \nu^\alpha+\sum_{1\leq i,j \leq n \atop {i\neq j}}\frac{m_i^{\varepsilon,h}[\sitesVector,\gVector]\; m_j^{\varepsilon,h}[\sitesVector,\gVector]}{\vert x_i-x_j\vert^2}.
\end{align} 
Now, in analogy to Proposition \ref{prop:gamma_eps} we demonstrate the consistency of entropy regularized and discrete characteristic functions 
and the continuous counterpart. To this end, let us define the $\nu$-almost surely piecewise continuous function $\chi_i^{\varepsilon,h}[\sitesVector,\gVector]\in L^1(\nu)$ as follows: $$\chi_i^{\varepsilon,h}[\sitesVector,\gVector](y)=\chi_{i,\alpha}^{\varepsilon,h}\ \ \text{for } \nu-\text{almost all } y \text{ in a cell interior}  
\stackrel{\circ}{\domain^\alpha_h}.$$
\begin{prpstn}\label{prop:gamma_eps_h}
For $(\varepsilon_N)_N\subset\R^+$, $(\sitesVector^N,\gVector^N)_N\subset\sitesSet\times\R^n$ with $\varepsilon_N\rightarrow 0$, $(\sitesVector^N,\gVector^N)\rightarrow(\sitesVector,\gVector)\in\sitesSet\times\R^n$ for $N\rightarrow\infty$ and for grid sizes $h_N=2^{-N}$ we obtain that 
$
\lim_{N\rightarrow\infty}\chi_i^{\varepsilon_N\mkern-3mu,\mkern1mu h_N}[\sitesVector^N,\gVector^N]=\chi_{\Laguerre_i[\sitesVector,\gVector]},
$
in $L^1(\nu).$
\end{prpstn}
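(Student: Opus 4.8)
The plan is to follow the proof of Proposition~\ref{prop:gamma_eps} closely and to reduce the claim to $\nu$-almost everywhere pointwise convergence followed by dominated convergence; the only genuinely new ingredient is that the spatial discretization replaces the query point $y$ by the center of the grid cell containing it. Concretely, for fixed $y\in\domain$ and each $N$ let $\alpha(N)$ denote the multi-index with $y\in D^{\alpha(N)}_{h_N}$ and write $\tilde y_N\coloneqq y^{\alpha(N)}_{h_N}$ for the corresponding cell center, so that by construction $\chi_i^{\varepsilon_N,h_N}[\sitesVector^N,\gVector^N](y)=\chi_i^{\varepsilon_N}[\sitesVector^N,\gVector^N](\tilde y_N)$. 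Since $|y-\tilde y_N|\le \tfrac{\sqrt d}{2}h_N\to 0$, I now have three simultaneous perturbations to control: $\varepsilon_N\to 0$, $(\sitesVector^N,\gVector^N)\to(\sitesVector,\gVector)$, and $\tilde y_N\to y$.

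Next I would carry out the separation argument of Proposition~\ref{prop:gamma_eps} with the query point allowed to move. If $y$ lies in the interior of $\Laguerre_i[\sitesVector,\gVector]$, there is $\delta>0$ with $|y-x_i|^2-\g_i\le |y-x_j|^2-\g_j-3\delta$ for all $j\neq i$. The map $(\sitesVector,\gVector,z)\mapsto |z-x_i|^2-\g_i$ is jointly continuous, and all three arguments converge, so for $N$ large enough the weaker separation $|\tilde y_N-x_i^N|^2-\g_i^N\le |\tilde y_N-x_j^N|^2-\g_j^N-\delta$ holds for every $j\neq i$. Substituting this into the soft-max expression exactly as in Proposition~\ref{prop:gamma_eps} yields
\begin{align*}
\chi_i^{\varepsilon_N,h_N}[\sitesVector^N,\gVector^N](y)\;\ge\;\frac{1}{1+(n-1)\exp(-\delta/\varepsilon_N)}\;\xrightarrow{N\to\infty}\;1.
\end{align*}
The complementary case $y\notin\overline{\Laguerre_i[\sitesVector,\gVector]}$ is handled by the matching upper bound: there are $j\neq i$ and $\delta>0$ with $|y-x_i|^2-\g_i\ge |y-x_j|^2-\g_j+3\delta$, which by the same joint continuity transfers to $\tilde y_N$ and $(\sitesVector^N,\gVector^N)$ with gap $\delta$, giving $\chi_i^{\varepsilon_N,h_N}[\sitesVector^N,\gVector^N](y)\le \exp(-\delta/\varepsilon_N)\to 0$.

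To conclude I would observe that the boundary $\partial\Laguerre_i[\sitesVector,\gVector]$ is contained in a finite union of hyperplanes and is therefore a Lebesgue null set, so by the absolute continuity of $\nu$ it is $\nu$-null. Thus the two cases above cover $\nu$-almost every $y$, giving $\chi_i^{\varepsilon_N,h_N}[\sitesVector^N,\gVector^N]\to\chi_{\Laguerre_i[\sitesVector,\gVector]}$ pointwise $\nu$-almost everywhere. Since all these functions take values in $[0,1]$ and $\nu$ is a finite measure, the dominated convergence theorem upgrades this to convergence in $L^1(\nu)$, which is the claim.

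I expect the main obstacle to be the bookkeeping of the three limits at once rather than any deep estimate: the separation margin $\delta$ must be fixed from the unperturbed inequality before letting $N\to\infty$, and one must verify that it survives the grid error $\tilde y_N\to y$ on equal footing with the parameter error $(\sitesVector^N,\gVector^N)\to(\sitesVector,\gVector)$. This is exactly what joint continuity of the quadratic separation functional delivers, since the fixed strict gap $3\delta$ leaves room for both vanishing perturbations to erode it only to $\delta$; the exponential suppression $\exp(-\delta/\varepsilon_N)$ then produces the limit regardless of how $h_N$ and $\varepsilon_N$ are coupled.
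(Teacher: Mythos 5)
Your proof is correct, but it takes a genuinely different route from the paper's. The paper argues via the triangle inequality, splitting the error into a regularization part $\Vert\chi_i^{\varepsilon_N}[\sitesVector^N,\gVector^N]-\chi_{\Laguerre_i[\sitesVector,\gVector]}\Vert_{L^1(\nu)}$, which is handled by citing Proposition~\ref{prop:gamma_eps} as a black box, and a discretization part $\Vert\chi_i^{\varepsilon_N,h_N}[\sitesVector^N,\gVector^N]-\chi_i^{\varepsilon_N}[\sitesVector^N,\gVector^N]\Vert_{L^1(\nu)}$, which it controls by computing $\nabla_y\chi_i^{\varepsilon_N}[\sitesVector^N,\gVector^N]$ and showing that on the interior of $\Laguerre_i[\sitesVector,\gVector]$ and outside $\overline{\Laguerre_i[\sitesVector,\gVector]}$ this gradient decays like $\tfrac{2}{\varepsilon_N}\exp(-\delta/\varepsilon_N)$, so that the cell-center sampling error is bounded by a local Lipschitz constant times $\vert y-y^{\alpha^N}_{h_N}\vert\lesssim h_N$ and vanishes. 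You avoid the derivative estimate entirely: using the identity $\chi_i^{\varepsilon_N,h_N}[\sitesVector^N,\gVector^N](y)=\chi_i^{\varepsilon_N}[\sitesVector^N,\gVector^N](\tilde y_N)$, you re-run the separation argument of Proposition~\ref{prop:gamma_eps} treating the moving query point $\tilde y_N\to y$ as a third vanishing perturbation on equal footing with $(\sitesVector^N,\gVector^N)\to(\sitesVector,\gVector)$, so the joint continuity of $(z,\sitesVector,\gVector)\mapsto\vert z-x_j\vert^2-\g_j-\vert z-x_i\vert^2+\g_i$ preserves a margin $\delta$ and the exponential bounds give the pointwise limits directly; both proofs then finish identically with dominated convergence. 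Your route is shorter, self-contained, and makes transparent that no coupling between $h_N$ and $\varepsilon_N$ is required; the paper's route is more modular and produces a quantitative equicontinuity estimate for $\chi_i^{\varepsilon_N}$ as a by-product. One small imprecision: $\partial\Laguerre_i[\sitesVector,\gVector]$ is contained in finitely many hyperplanes \emph{together with} part of $\partial\domain$, not in hyperplanes alone; since $\domain$ is compact and convex, $\partial\domain$ is also a Lebesgue null set, so your conclusion that the exceptional set is $\nu$-null stands (the paper's own proof of Proposition~\ref{prop:gamma_eps} glosses over the same point).
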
 
\begin{proof}
By the triangle inequality, we have that $$\left\Vert\chi_i^{\varepsilon_N\mkern-3mu,\mkern1mu  h_N}[\sitesVector^N,\gVector^N]-\chi_{\Laguerre_i[\sitesVector,\gVector]}\right\Vert_{L^1(\nu)}\leq\left\Vert\chi_i^{\varepsilon_N\mkern-3mu,\mkern1mu  h_N}[\sitesVector^N,\gVector^N]-\chi_i^{\varepsilon_N}[\sitesVector^N,\gVector^N]\right\Vert_{L^1(\nu)}+\left\Vert\chi_i^{\varepsilon_N}[\sitesVector^N,\gVector^N]-\chi_{\Laguerre_i[\sitesVector,\gVector]}\right\Vert_{L^1(\nu)}.$$
In Proposition \ref{prop:gamma_eps}, we already showed that the second term converges to zero.
To verify that the first term vanishes as well in the limit for 
$N\rightarrow\infty$, we estimate the norm of the gradient of $\chi_i^{\varepsilon_N}[\sitesVector^N,\gVector^N]$ given by
\begin{align*}
\nabla_y\chi_i^{\varepsilon_N}[\sitesVector^N,\gVector^N](y)=\chi_i^{\varepsilon_N}[\sitesVector^N,\gVector^N](y)\left(-2\frac{y-x_i^N}{\varepsilon_N}+\frac{\sum_{j=1}^n2(y-x_j^N)\exp\left(\frac{\g_j^N-\vert y-x_j^N\vert^2}{\varepsilon_N}\right)}{\varepsilon_N\sum_{j=1}^n\exp\left(\frac{\g_j^N-\vert y-x_j^N\vert^2}{\varepsilon_N}\right)}\right)\,.
\end{align*}
Let $y\in D$ be in the interior of $\Laguerre_i(\sitesVector,\gVector).$ Then, as in the proof of Proposition \ref{prop:gamma_eps}, there is a $\delta>0$, such that $$\vert y-x_i^N\vert^2-\g_i^N\leq\vert y-x_j^N\vert^2-\g_j^N-\delta$$ for for $j\neq i$ and for sufficiently large $N$. 
Then, one obtains
\begin{align*}
\vert\nabla_y\chi_i^{\varepsilon_N}[\sitesVector^N,\gVector^N](y)\vert&=\frac{2\chi_i^{\varepsilon_N}[\sitesVector^N,\gVector^N](y)}{\varepsilon_N}\left\vert (y-x_i^N)(\chi_i^{\varepsilon_N}[\sitesVector^N,\gVector^N](y)-1)+\frac{\sum_{j\neq i}(y-x_j^N)\exp\left(\frac{\g_j^N-\vert y-x_j^N\vert^2}{\varepsilon_N}\right)}{\sum_{j=1}^n\exp\left(\frac{\g_j^N-\vert y-x_j^N\vert^2}{\varepsilon_N}\right)}\right\vert\\
&\leq \frac{2}{\varepsilon_N}\left(\vert y-x_i^N\vert(1-\chi_i^{\varepsilon_N}[\sitesVector^N,\gVector^N](y))+\sum_{j\neq i}\vert y-x_j^N\vert\exp\left(\frac{-\delta}{\varepsilon_N}\right)\right)\\
&\leq\frac{2}{\varepsilon_N}\left(\vert y-x_i^N\vert\left(\frac{(n-1)\exp(-\delta/\varepsilon_N)}{1+(n-1)\exp(-\delta/\varepsilon_N)}\right)+\sum_{j\neq i}\vert y-x_j^N\vert\exp\left(\frac{-\delta}{\varepsilon_N}\right)\right)\\
&\leq\frac{2}{\varepsilon_N}\exp\left(-\delta/\varepsilon_N\right)\left(\vert y-x_i^N\vert(n-1)+\sum_{j\neq i}\vert y-x_j^N\vert\right)\longrightarrow 0,
\end{align*}
for $N\to\infty$.
Next, let  $y\notin\overline{\Laguerre_i(\sitesVector,\gVector)}.$ Then, it holds 
\begin{align*}
\vert\nabla_y\chi_i^{\varepsilon_N}[\sitesVector^N,\gVector^N](y)\vert
&=\frac{2\chi_i^{\varepsilon_N}[\sitesVector^N,\gVector^N](y)}{\varepsilon_N}\left\vert -(y-x_i^N)+\frac{\sum_{j\neq i}(y-x_j^N)\exp\left(\frac{\g_j^N-\vert y-x_j^N\vert^2}{\varepsilon_N}\right)}{\sum_{j=1}^n\exp\left(\frac{\g_j^N-\vert y-x_j^N\vert^2}{\varepsilon_N}\right)}\right\vert\\
&=\frac{2\chi_i^{\varepsilon_N}[\sitesVector^N,\gVector^N](y)}{\varepsilon_N}\left\vert -(y-x_i^N)+\tilde x\right\vert\leq\frac{2}{\varepsilon_N}\exp(-\delta/\varepsilon_N)\left\vert x_i^N-y+\tilde x\right\vert\rightarrow 0,
\end{align*}
where 
$\tilde x = \sum_{j\neq i}(y-x_j^N)\chi_i^{\varepsilon_N}[\sitesVector^N,\gVector^N](y)$ 
is an element in the convex hull of $(y-x_j^N)_{j=1,\ldots,n}.$ and thus due to the convergence of $(\sitesVector^N)_N$ uniformly bounded. This implies 
that $\vert x_i^N-y+\tilde x\vert$ is uniformly bounded.  

For every $y\in\domain$ in the interior of $\Laguerre_i(\sitesVector,\gVector)$ and sufficiently large $N$ we observe that $y$ is in the cell 
$\domain_{h_N}^{\alpha^N}$ for some multi-indices $\alpha^N$ 
and $\domain_{h_{N_0}}^{\alpha^{N_0}}$ is completely contained in the interior of $\Laguerre_i(\sitesVector,\gVector)$.
There is a $\delta>0$ such that $\vert y-x_i^N\vert^2-\g_i^N\leq\vert y-x_j^N\vert^2-g_j^N-\delta$ holds for $j\neq i$ and all $y\in\domain_{h_N}^{\alpha^N}$.
Hence, the restriction of $\chi_i^{\varepsilon_M}[\sitesVector^M,\gVector^M]$ onto this grid cell for $M\geq N$ 
is a family of uniformly Lipschitz functions, and in particular equicontinuous. It follows that 
\begin{align*}
\left\vert\chi_i^{\varepsilon_N\mkern-3mu,\mkern1mu  h_N}[\sitesVector^N,\gVector^N](y)-\chi_i^{\varepsilon_N}[\sitesVector^N,\gVector^N](y)\right\vert&=\left\vert\chi_i^{\varepsilon_N}[\sitesVector^N,\gVector^N](y_{h_N}^{\alpha^N})-\chi_i^{\varepsilon_N}[\sitesVector^N,\gVector^N](y)\right\vert\\
&\leq\left(\sup_{N\geq N_0}\Lip(\chi_i^{\varepsilon_N}[\sitesVector^N,\gVector^N])\right)\vert y-y_{h_N}^{\alpha^N}\vert\rightarrow 0
\end{align*}

The analogous statement holds for $y\notin\overline{\Laguerre_i(\sitesVector,\gVector)}$ as well.
Thus,  $\chi_i^{\varepsilon_N\mkern-3mu,\mkern1mu  h_N}[\sitesVector^N,\gVector^N]$ converges to $\chi_i^{\varepsilon_N}[\sitesVector^N,\gVector^N]$ $\nu$-almost everywhere and, by the dominated convergence theorem, one obtains the desired convergence in $L^1(\nu)$. 
\end{proof}
\notinclude{
\begin{crllr}
Let $(\varepsilon_N)_N\subset\R^+$ with $\lim_{N\rightarrow\infty}\varepsilon_N=0$, $K:\N\rightarrow\N$ monotonically increasing with $\lim_{N\rightarrow\infty}K(N)=\infty$, $h_N=2^{-K(N)}$ and $(\sitesVector^N,\gVector^N)_N\subset\sitesSet\times\R^n$ so that  $(\sitesVector^N,\gVector^N)\rightarrow(\sitesVector,\gVector)\in\sitesSet\times\R^n$ for $N\rightarrow\infty$. Then, we have
\begin{align*}
\limsup_{N\rightarrow\infty}\Func_n^{\eta,\varepsilon_N\!,h_N}(\sitesVector^N,\gVector^N)\leq\Func_n^{\eta}(\sitesVector,\gVector).
\end{align*}
If, additionally, $m_i[\sitesVector,\gVector]>0$ for $i=1,\ldots,n$, we obtain
\begin{align*}
\lim_{N\rightarrow\infty}\Func_n^{\eta,\varepsilon_N\!,h_N}(\sitesVector^N,\gVector^N)=\Func_n^{\eta}(\sitesVector,\gVector).
\end{align*}
\end{crllr}
\begin{proof}
Notice that 
\begin{align*}
b_i^{\varepsilon_N\!,h_N}[\sitesVector^N,\gVector^N]&=
\sum_{\alpha\in \mathcal{I}^d} \frac{y^\alpha_h\; \chi_{i,\alpha}^{\varepsilon_N\!,h_N}[\sitesVector^N,\gVector^N]} 
{m_i^{\varepsilon_N\!,h_N}[\sitesVector^N,\gVector^N]} \nu^\alpha=\int_\domain\frac{g^N(y)\; \chi_{i}^{\varepsilon_N\!,h_N}[\sitesVector^N,\gVector^N]} 
{m_i^{\varepsilon_N\!,h_N}[\sitesVector^N,\gVector^N](y)}\d\nu(y)\\
\mathcal{R}_n^{\varepsilon_N\!,h_N}[\sitesVector^N,\gVector^N]&=\sum_{i=1,\ldots, n}\sum_{\alpha\in \mathcal{I}^d}\vert y^\alpha_h-x_i^N\vert^2 \chi_{i,\alpha}^{\varepsilon_N\!,h_N}[\sitesVector^N,\gVector^N]\nu^\alpha+\sum_{1\leq i,j \leq n \atop {i\neq j}}\frac{m_i^{\varepsilon_N\!,h_N}[\sitesVector^N,\gVector^N]\; m_j^{\varepsilon_N\!,h_N}[\sitesVector^N,\gVector^N]}{\vert x_i^N-x_j^N\vert^2}\\
&=\sum_{i=1,\ldots, n}\int_\domain f^N(y) \chi_{i}^{\varepsilon_N\!,h_N}[\sitesVector^N,\gVector^N](y)\d\nu(y)+\sum_{1\leq i,j \leq n \atop {i\neq j}}\frac{m_i^{\varepsilon_N\!,h_N}[\sitesVector^N,\gVector^N]\; m_j^{\varepsilon_N\!,h_N}[\sitesVector^N,\gVector^N]}{\vert x_i^N-x_j^N\vert^2},
\end{align*}
where $f^N(y)\coloneqq y^\alpha_h$ for $y\in\stackrel{\circ}{\domain^\alpha_h}$, and $g^N(y)\coloneqq\vert y^\alpha_h-x_i^N\vert^2$ for $y\in\stackrel{\circ}{\domain^\alpha_h}$.
Use Proposition \ref{prop:gamma_eps_h} for the convergence of the masses and the $L^1(\nu)$-convergence of the functions $\chi^{\eta,\varepsilon_N\!,h_N}_i[\sitesVector^N,\gVector^N]$. For the convergence of the barycenter and the first term of the regularizer, use the fact that the identity function  $g$ and the function family $y\mapsto\vert y-x_i^N\vert^2$ is uniformly Lipschitz on $\domain$ for a convergent sequence $(x_i^N)_{N\in\N}$. This leads to $f^N\rightarrow f$ and $g^N\rightarrow f$ in $L^\infty(\nu)$, where $f(y)=\vert y-x_i\vert^2$. By Banach-Steinhaus, we obtain the convergence of the barycenters and the first term of the regularizer.
\end{proof}}We obtain the entropy-regularized, fully discrete analogue of Theorem \ref{thm:maximizer}.
\begin{thrm} 
Let us assume that $\varepsilon, h>0$, and $\Phi\in C(\domain)$ with maximum $\bar \Phi$ on $D$.
Then, for given number of sites $l\in\N$ there exists an $n\leq l$, 
such that a maximizer $(\sitesVector^*,\gVector^*)$ of $\mathcal{F}_n^{\eta,\varepsilon,h}$ exists  
with  $m_i^{\varepsilon,h}[\sitesVector^*,\gVector^*]>0$ for $i=1,\ldots,n$
and  $\mathcal{F}_n^{\eta,\varepsilon,h}(\sitesVector^*,\gVector^*)\geq\sup_{(\sitesVector,\gVector)}\mathcal{F}_l^{\eta,\varepsilon,h}(\sitesVector,\gVector)$.
\end{thrm}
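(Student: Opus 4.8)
The plan is to mirror the proof of Theorem~\ref{thm:maximizer} almost verbatim, replacing each continuous object by its fully discrete counterpart: the regularized masses $m_i^\varepsilon[\sitesVector,\gVector]$, barycenters $b_i^\varepsilon[\sitesVector,\gVector]$, partition-of-unity functions $\chi_i^\varepsilon[\sitesVector,\gVector]$ and penalty $\mathcal{R}_n^\varepsilon$ are exchanged for the finite-sum quantities $m_i^{\varepsilon,h}$, $b_i^{\varepsilon,h}$, $\chi_{i,\alpha}^{\varepsilon,h}$ and $\mathcal{R}_n^{\varepsilon,h}$ from \eqref{def:penalty_discrete}. The essential simplification over the continuous case is that, for fixed $\varepsilon,h>0$, all of these are finite sums of smooth functions of $(\sitesVector,\gVector)$ and depend continuously on $(\sitesVector,\gVector)$ wherever the masses are positive; in particular no $L^1(\nu)$-convergence argument (and hence no appeal to Proposition~\ref{prop:gamma_eps_h}) is needed to pass limits through the functional. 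The argument proceeds in the same two stages: first reduce the effective number of cells from $l$ to some $n\le l$, then establish a priori bounds along a maximizing sequence of the $n$-cell functional and conclude by the Weierstra{\ss} extreme value theorem.

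For the first stage I would take a maximizing sequence $(\sitesVector^N,\gVector^N)_N$ for $\mathcal{F}_l^{\eta,\varepsilon,h}$. Since $m_i^{\varepsilon,h}[\sitesVector^N,\gVector^N]\in[0,1]$ and $b_i^{\varepsilon,h}[\sitesVector^N,\gVector^N]\in\domain$ with $\domain$ compact, I pass to a subsequence along which masses and barycenters converge to limits $m_i^*$ and $b_i^*$, relabelling so that $m_i^*>0$ if and only if $i\le n$. The same chain of $\limsup$/$\liminf$ inequalities as in Theorem~\ref{thm:maximizer} then applies: I split $\mathcal{F}_l^{\varepsilon,h}=\mathcal{F}_n^{\varepsilon,h}+\sum_{i=n+1}^{l}m_i^{\varepsilon,h}\Phi(b_i^{\varepsilon,h})$, where the extra terms vanish in the limit because their masses tend to zero and $\Phi$ is bounded by $\bar\Phi$, and I use that $\mathcal{R}_l^{\varepsilon,h}-\mathcal{R}_n^{\varepsilon,h}\ge 0$ is a sum of nonnegative terms, so discarding it only enlarges the upper bound. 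This yields $\sup_{(\sitesVector,\gVector)}\mathcal{F}_l^{\eta,\varepsilon,h}\le \lim_N \mathcal{F}_n^{\eta,\varepsilon,h}(\sitesVector^N,\gVector^N)$, so that an $n$-cell maximizer (once shown to exist) dominates the $l$-cell supremum.

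For the second stage I would take a maximizing sequence for $\mathcal{F}_n^{\eta,\varepsilon,h}$ with all limit masses $m_i^*>0$ and, after a further subsequence, $m_i^{\varepsilon,h}[\sitesVector^N,\gVector^N]\ge\tfrac12 m_i^*$. Coercivity of the penalty then supplies the a priori bounds exactly as before: since every grid centre $y^\alpha_h$ lies in $\domain$, the first penalty term obeys $\sum_\alpha|y^\alpha_h-x_i|^2\chi_{i,\alpha}^{\varepsilon,h}\nu^\alpha\ge\dist^2(x_i,\domain)\,m_i^{\varepsilon,h}$, which bounds $\dist(x_i^N,\domain)$ uniformly in $N$, while the second term $\sum_{i\neq j} m_i^{\varepsilon,h}m_j^{\varepsilon,h}/|x_i-x_j|^2$ bounds $|x_i^N-x_j^N|$ from below. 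With the sites thus confined to a bounded set, the uniform bound on $\gVector$ follows from the gauge freedom $\gVector\sim\gVector+\lambda\Id_n$, under which $\chi_{i,\alpha}^{\varepsilon,h}$ is invariant (normalize $\g_1^N=0$), together with the softmax structure: if some $\g_j^N\to+\infty$ the denominator would be dominated by the $j$-th exponential and $m_1^{\varepsilon,h}\to 0$, whereas $\g_j^N\to-\infty$ forces $m_j^{\varepsilon,h}\to 0$, each contradicting $m_i^*>0$. Having confined $(\sitesVector^N,\gVector^N)$ to a compact set on which $\mathcal{F}_n^{\eta,\varepsilon,h}$ is continuous, the Weierstra{\ss} theorem delivers the maximizer.

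I expect the only point requiring genuine care to be the preservation of the penalty's coercivity under spatial discretization, namely verifying that the discrete drift term still controls $\dist(x_i,\domain)$ (which rests on $y^\alpha_h\in\domain$) and that the softmax denominators degenerate in the claimed way. Everything else is a routine transcription of the continuous argument, and is in fact lighter, since continuity of the finite-dimensional quantities replaces the measure-theoretic limit passages of the earlier proofs.
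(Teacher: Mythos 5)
Your proposal is correct and takes essentially the same approach as the paper: the paper's own proof consists of the single remark that the argument is ``completely analogous to the proof of Theorem~\ref{thm:maximizer}'', which is exactly the two-stage transcription you carry out (reduction from $l$ to $n$ cells along a maximizing sequence, then the coercivity bounds on $\dist(x_i^N,\domain)$, $\vert x_i^N-x_j^N\vert$ and $\vert\gVector^N\vert$ via the gauge normalization $\g_1^N=0$, concluding with the Weierstra{\ss} theorem). Your added observation that for fixed $\varepsilon,h>0$ all quantities are finite sums of smooth functions of $(\sitesVector,\gVector)$, so that no $L^1(\nu)$-limit argument or appeal to Proposition~\ref{prop:gamma_eps_h} is needed, is accurate and consistent with the paper.
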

\begin{proof}
The proof is completely analogous to the proof of Theorem \ref{thm:maximizer}.
\notinclude{Consider a maximizing sequence $(\sitesVector^N,\gVector^N)_N$ for $\mathcal{F}_l^{\eta,\varepsilon,h}$. 
Since $m_i^{\varepsilon,h}[\sitesVector^N,\gVector^N]\in[0,1],$ and $b_i^{\varepsilon,h}[\sitesVector^N,\gVector^N]\in\domain$, we may assume that up to the selection of a subsequence the sequences of masses and barycenters converge to limits $m^*_i$ and $b^*_i$, respectively, and that there exists an $n\leq l$ with $m^*_i>0$ if and only if 
$i\leq n$. Then, we obtain
\begin{align*}
\sup_{(\sitesVector,\gVector)}\mathcal{F}_l^{\eta,\varepsilon,h}(\sitesVector,\gVector)&=\lim_{N\rightarrow\infty}\mathcal{F}_l^{\eta,\varepsilon,h}(\sitesVector^N,\gVector^N)\leq\limsup_{N\rightarrow\infty}\mathcal{F}_l^{\varepsilon,h}(\sitesVector^N,\gVector^N)-\eta\liminf_{N\rightarrow\infty}\mathcal{R}_l^{\varepsilon,h}(\sitesVector^N,\gVector^N)\\
&\leq\limsup_{N\rightarrow\infty}\mathcal{F}_n^{\varepsilon,h}(\sitesVector^N,\gVector^N)+\limsup_{N\rightarrow\infty}\sum_{i=n+1,\ldots, l} m_i^{\varepsilon,h}[\sitesVector^N,\gVector^N]\Phi(b_i^{\varepsilon,h}[\sitesVector^N,\gVector^N])\\&\quad-\eta\liminf_{N\rightarrow\infty}\mathcal{R}_n^{\varepsilon,h}(\sitesVector^N,\gVector^N)-\eta\liminf_{N\rightarrow\infty}
\left(\mathcal{R}_l^{\varepsilon,h}(\sitesVector^N,\gVector^N)-\mathcal{R}_n^{\varepsilon,h}(\sitesVector^N,\gVector^N)\right)\\
&\leq\lim_{N\rightarrow\infty}\mathcal{F}_n^{\varepsilon,h}(\sitesVector^N,\gVector^N)-\eta\lim_{N\rightarrow\infty}\mathcal{R}_n^{\varepsilon,h}(\sitesVector^N,\gVector^N)=\lim_{N\rightarrow\infty}\mathcal{F}_n^{\varepsilon,h}(\sitesVector^N,\gVector^N)-\eta\mathcal{R}_n^{\varepsilon,h}(\sitesVector^N,\gVector^N)\\
&=\lim_{N\rightarrow\infty}\mathcal{F}_n^{\eta,\varepsilon,h}(\sitesVector^N,\gVector^N),
\end{align*}
where for the sake of clarity, we denote by $(\sitesVector^N,\gVector^N)$ both itself and its truncation to the first $n$ components.
Hence, a maximizer of $\mathcal{F}_n^{\eta,\varepsilon,h}$, if it exists, attains a greater or equal objective value than a maximizer of 
$\mathcal{F}_l^{\eta,\varepsilon,h}$. Now, let $(\sitesVector^N,\gVector^N)_N\subset\sitesSet\times\R^n$ be a maximizing sequence for $\mathcal{F}_n^{\eta,\varepsilon,h}$ and in analogy to before assume that $m_i^{\varepsilon,h}[\sitesVector^N,\gVector^N]$ and $b_i^{\varepsilon,h}[\sitesVector^N,\gVector^N]$ 
have limits $m^*_i>0$, and $b^*_i\in D$ for $i=1,\ldots, n$, that $\mathcal{F}_n^{\eta,\varepsilon,h}(\sitesVector^N,\gVector^N)$  is monotonically increasing in $N$ and that $m_i^{\varepsilon,h}[\sitesVector^N,\gVector^N]\geq\tfrac12 m_i^*$. Consequently, the estimate
\begin{align}
\mathcal{F}_n^{\eta,\varepsilon,h}(\sitesVector^0,\gVector^0) &\leq \nu(D) \bar \Phi - 
\eta\left(\sum_{\alpha\in(\mathcal{I}^h)^d}\sum_{i=1,\ldots, n}\vert y^\alpha_h-x_i^N\vert^2\chi^{\varepsilon,h}_{i,\alpha}[\sitesVector^N,\gVector^N]\nu^\alpha+\sum_{1\leq i,j \leq n\atop i\neq j}\frac{m_i^{\varepsilon,h}[\sitesVector^N,\gVector^N]m_j^{\varepsilon,h}[\sitesVector^N,\gVector^N]}{\vert x_i^N-x_j^N\vert^2}\right) \nonumber\\
&\leq \bar \Phi -\eta\left(\frac12\sum_{i=1,\ldots,n}\dist^2(x_i^N,\domain)m_i^*+\frac14\sum_{1\leq i,j \leq n \atop i\neq j}\frac{m_i^*m_j^*}{\vert x_i^N-x_j^N\vert^2}\right)
\end{align}
is obtained. This implies the following a priori bounds:
\begin{align*}
\dist(x_i^N,\domain) \leq \sqrt{ \frac{2(\bar \Phi - \mathcal{F}_n^{\eta,\varepsilon,h}(\sitesVector^0,\gVector^0))}{\eta m_i^*}} ,\quad
\vert x_i^N -x_j^N\vert \geq \sqrt{\frac{\eta m_i^*m_j^*}{4( \bar \Phi - \mathcal{F}_n^{\eta,\varepsilon,h}(\sitesVector^0, \gVector^0))}}
\end{align*}
for all $N$. For the uniform bound on $\gVector$, recall that $\gVector$ and $\gVector+\lambda\Id_n$ both result on the same Laguerre diagram. Hence, we may assume without loss of generality that $\g_1^N=0$ for all $N\in\N$. 
Then,  $\lim_{N\rightarrow\infty}\g_j^N=\infty$ for $j=2,\ldots,n$ 
would imply $\lim_{N\rightarrow\infty} m_1^{\varepsilon,h}[\sitesVector^N,\gVector^N]=0$, which contradicts our choice of $n$.
Similarly, $\lim_{N\rightarrow\infty}\g_j=-\infty$ would imply $\lim_{N\rightarrow\infty} m_j^{\varepsilon,h}[\sitesVector^N,\gVector^N]=0$, 
which again is a contradiction.
Hence, $\vert\gVector^N\vert\leq C$ for some $C>0$ and all $N\in \N$. 
Finally, given these a priori bounds the existence of a maximizer of $\mathcal{F}_n^{\eta,\varepsilon,h}$ follows directly from the Weierstra{\ss} extreme value theorem. 
}
\end{proof}

Next, we investigate the 
convergence of maximizers of the fully discrete, entropy regularized functional $\mathcal{F}_n^{\eta,\varepsilon,h}$ 
given in  \eqref{def:func_discrete} to a
maximizer of the original functional $\mathcal{F}_n^\eta$ defined in \eqref{eq:objective3} for both $\varepsilon\to 0$ and $h\to 0$.
\begin{thrm}
Let $\Phi\in C(\domain),$ $l\in\N$ and $\eta>0$. For each $N\in\N$, consider maximizers $(\sitesVector^N,\gVector^N)\in\mathcal{X}_{\beta(N)}\times\R^{\beta(N)}$, where $\beta:\N\rightarrow\{1,\ldots,n\}$, so that $\Func_{\beta(N)}^{\eta,\varepsilon_N\!,h_N}(\sitesVector^N,\gVector^N)\geq\max_{k=1,\ldots,n}\{\sup_{(\sitesVector,\gVector)\in\mathcal{X}_k\times\R^k}\Func_k^{\eta,\varepsilon_N\!,h_N}(\sitesVector,\gVector)\}$. Then, there is $n\in\{1,\ldots,l\}$ such that up to the selection of a subsequence $(\sitesVector^{N},\gVector^{N})_{N\in\N}\subset\sitesSet\times\R^n$  converges to a limit $(\sitesVector^*,\gVector^*)\in\sitesSet\times\R^{n}$. Furthermore, it holds $$\Func_{n}^{\eta}(\sitesVector^*,\gVector^*)\geq\max_{k=1,\ldots,l}\left\{\sup_{(\sitesVector,\gVector)\in\mathcal{X}_k\times\R^k}\Func_k^{\eta}(\sitesVector,\gVector)\right\}.$$
\end{thrm}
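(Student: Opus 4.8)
The plan is to mirror the proof of Theorem~\ref{thm:max_conv_eps} almost line by line, letting the fully discrete Proposition~\ref{prop:gamma_eps_h} take over the role that Proposition~\ref{prop:gamma_eps} played there, and reinterpreting the discrete masses, barycenters and the first regularizer term as integrals against $\nu$ of the piecewise-constant interpolants induced by the dyadic grid. First I would apply the pigeonhole principle to the finite range of $\beta$ to select some $n'\in\{1,\ldots,l\}$ attained infinitely often; after relabeling I pass to a subsequence with $(\sitesVector^N,\gVector^N)\in\mathcal{X}_{n'}\times\R^{n'}$ along which all discrete masses $m_i^{\varepsilon_N\!,h_N}[\sitesVector^N,\gVector^N]$ converge in $[0,1]$. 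I then fix $n\leq n'$ so that the limit mass $m_i^*$ is strictly positive exactly for $i\leq n$, and, after discarding finitely many terms, arrange $m_i^{\varepsilon_N\!,h_N}[\sitesVector^N,\gVector^N]\geq\tfrac12 m_i^*$ for $i\leq n$ and all $N$.

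Next I would reproduce the chain of inequalities from Theorem~\ref{thm:max_conv_eps}: for any competitor $(\overline\sitesVector,\overline\gVector)\in\mathcal{X}_k\times\R^k$ with all cell masses positive, applying Proposition~\ref{prop:gamma_eps_h} to the constant sequence $(\overline\sitesVector,\overline\gVector)$ together with the continuity of $\Phi$ gives $\Func_k^\eta[\overline\sitesVector,\overline\gVector]=\lim_N\Func_k^{\eta,\varepsilon_N\!,h_N}[\overline\sitesVector,\overline\gVector]$, which by optimality of $(\sitesVector^N,\gVector^N)$ is bounded above by $\limsup_N\Func_{n'}^{\eta,\varepsilon_N\!,h_N}[\sitesVector^N,\gVector^N]$. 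Splitting the $n'$ cells into the $n$ surviving ones and the $n'-n$ vanishing ones, the contribution of the vanishing cells tends to zero because their masses do and $\Phi\leq\bar\Phi$, while dropping the nonnegative tail $\mathcal{R}_{n'}^{\varepsilon_N\!,h_N}-\mathcal{R}_n^{\varepsilon_N\!,h_N}$ only loosens the bound. This yields $\Func_k^\eta[\overline\sitesVector,\overline\gVector]\leq\bar\Phi-\eta\liminf_N\mathcal{R}_n^{\varepsilon_N\!,h_N}(\sitesVector^N,\gVector^N)$ and hence a uniform bound on $\liminf_N\mathcal{R}_n^{\varepsilon_N\!,h_N}$. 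From this I extract, exactly as before, a priori bounds $\dist(x_i^N,\domain)\lesssim(\eta m_i^*)^{-1/2}$ and $|x_i^N-x_j^N|\gtrsim(\eta m_i^* m_j^*)^{1/2}$, and, after gauge-fixing $\g_1^N=0$ using the invariance of the diagram under $\gVector\mapsto\gVector+\lambda\Id_n$, a uniform bound on $|\gVector^N|$; compactness then furnishes a subsequence with $(\sitesVector^N,\gVector^N)\to(\sitesVector^*,\gVector^*)\in\sitesSet\times\R^n$.

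The final step is passing to the limit in $\Func_n^{\eta,\varepsilon_N\!,h_N}(\sitesVector^N,\gVector^N)$, and since this is the only place where the spatial discretization genuinely enters I expect it to be the main obstacle. Here I would write the discrete barycenters and the first regularizer term as integrals against $\nu$ of the piecewise-constant interpolants $f^N(y)=y_{h_N}^\alpha$ and $g_i^N(y)=|y_{h_N}^\alpha-x_i^N|^2$ on the interior of each cell $\domain_{h_N}^\alpha$. Because $h_N\to 0$ and $x_i^N\to x_i^*$, these interpolants converge in $L^\infty(\nu)$ to the identity and to $y\mapsto|y-x_i^*|^2$, respectively, while $\chi_i^{\varepsilon_N\!,h_N}[\sitesVector^N,\gVector^N]\to\chi_{\Laguerre_i[\sitesVector^*,\gVector^*]}$ in $L^1(\nu)$ by Proposition~\ref{prop:gamma_eps_h} and the surviving masses stay bounded away from zero; the uniform boundedness principle then lets me pass the $L^\infty\times L^1$ pairings to the limit, giving $m_i^{\varepsilon_N\!,h_N}\to m_i[\sitesVector^*,\gVector^*]$, $b_i^{\varepsilon_N\!,h_N}\to b_i[\sitesVector^*,\gVector^*]$ and convergence of the regularizer, hence $\lim_N\Func_n^{\eta,\varepsilon_N\!,h_N}(\sitesVector^N,\gVector^N)=\Func_n^\eta(\sitesVector^*,\gVector^*)$. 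Combining this with the inequality of the previous paragraph, then taking the supremum over competitors $(\overline\sitesVector,\overline\gVector)$ and the maximum over $k\in\{1,\ldots,l\}$, yields the claimed bound $\Func_n^\eta(\sitesVector^*,\gVector^*)\geq\max_{k=1,\ldots,l}\sup_{(\sitesVector,\gVector)\in\mathcal{X}_k\times\R^k}\Func_k^\eta(\sitesVector,\gVector)$.
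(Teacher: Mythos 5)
Your proposal is correct and follows essentially the same route as the paper: the paper likewise runs the argument of Theorem~\ref{thm:max_conv_eps} verbatim (pigeonhole on $\beta$, convergent masses, the competitor inequality chain yielding the regularizer bound and a priori bounds, gauge-fixing $\g_1^N=0$, compactness), with Proposition~\ref{prop:gamma_eps_h} replacing Proposition~\ref{prop:gamma_eps} and the discrete barycenters and regularizer rewritten as $\nu$-integrals of the piecewise-constant interpolants $f^N$, $g^N$ converging in $L^\infty(\nu)$, combined with the uniform boundedness principle to pass the pairings to the limit. No gaps beyond those already present in the paper's own compressed exposition.
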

\begin{proof}
The proof of the first statement is along the same lines as the proof of Theorem \ref{thm:maximizer}. For the last statement, notice that 
\begin{align*}
b_i^{\varepsilon_N\!,h_N}[\sitesVector^N,\gVector^N]&=
\sum_{\alpha\in \mathcal{I}^d} \frac{y^\alpha_h\; \chi_{i,\alpha}^{\varepsilon_N\!,h_N}[\sitesVector^N,\gVector^N]} 
{m_i^{\varepsilon_N\!,h_N}[\sitesVector^N,\gVector^N]} \nu^\alpha=\int_\domain\frac{g^N(y)\; \chi_{i}^{\varepsilon_N\!,h_N}[\sitesVector^N,\gVector^N]} 
{m_i^{\varepsilon_N\!,h_N}[\sitesVector^N,\gVector^N](y)}\d\nu(y)\\
\mathcal{R}_n^{\varepsilon_N\!,h_N}[\sitesVector^N,\gVector^N]&=\sum_{i=1,\ldots, n}\sum_{\alpha\in \mathcal{I}^d}\vert y^\alpha_h-x_i^N\vert^2 \chi_{i,\alpha}^{\varepsilon_N\!,h_N}[\sitesVector^N,\gVector^N]\nu^\alpha+\sum_{1\leq i,j \leq n \atop {i\neq j}}\frac{m_i^{\varepsilon_N\!,h_N}[\sitesVector^N,\gVector^N]\; m_j^{\varepsilon_N\!,h_N}[\sitesVector^N,\gVector^N]}{\vert x_i^N-x_j^N\vert^2}\\
&=\sum_{i=1,\ldots, n}\int_\domain f^N(y) \chi_{i}^{\varepsilon_N\!,h_N}[\sitesVector^N,\gVector^N](y)\d\nu(y)+\sum_{1\leq i,j \leq n \atop {i\neq j}}\frac{m_i^{\varepsilon_N\!,h_N}[\sitesVector^N,\gVector^N]\; m_j^{\varepsilon_N\!,h_N}[\sitesVector^N,\gVector^N]}{\vert x_i^N-x_j^N\vert^2},
\end{align*}
where $f^N(y)\coloneqq y^\alpha_h$ for $y\in\stackrel{\circ}{\domain^\alpha_h}$, and $g^N(y)\coloneqq\vert y^\alpha_h-x_i^N\vert^2$ for $y\in\stackrel{\circ}{\domain^\alpha_h}$ are the piecewise constant approximations of the functions $y\mapsto y$ and $y\mapsto\vert y-x_i^N\vert^2$ consistent with our grid cells. The rest of the proof is then completely analogous to the proof of the respective statement in Theorem \ref{thm:max_conv_eps}.
\notinclude{
Let $n'\in\{1,\ldots,l\}$, such that $\vert\{N:\beta(N)=n'\}\vert=\infty$. Choose a subsequence $(\sitesVector^N,\gVector^N)_{N\in\N}\subset\mathcal{X}_{n'}\times\R^{n'}$, such that up to relabeling of indices the sequence of masses\\ $(m_1^{\varepsilon_N\!,h_N}[\sitesVector^N,\gVector^N],\ldots,m_{n'}^{\varepsilon_N\!,h_N}[\sitesVector^N,\gVector^N])_{N\in\N}$ converges to some $(m_1^*,\ldots,m_{n'}^*)\in[0,1]^{n'}$. Moreover, assume without loss of generality that there is $n\in\{1,\ldots,n'\}$ with $m_i^*>0$ if and only if $i\leq n$, and $m_i^{\varepsilon_N\!,h_N}[\sitesVector^N,\gVector^N]\geq\tfrac12m_i^*$ for all $N\in\N$, $i\leq n$. It then holds for all $k \in\{1,\ldots,l\}$ and all $(\overline\sitesVector,\overline\gVector)\in\mathcal{X}_k\times\R^k$ with $m_i[\bar\sitesVector,\bar\gVector]>0$ for $i=1,\ldots,k$:
\begin{align*}\Func_k^\eta[\overline\sitesVector,\overline\gVector]&=\lim_{N\rightarrow\infty}\Func_k^{\eta,\varepsilon_N\!,h_N}[\overline\sitesVector,\overline\gVector]\leq\limsup_{N\rightarrow\infty}\Func_{n'}^{\eta,\varepsilon_N\!,h_N}[\sitesVector^N,\gVector^N]\leq\limsup_{N\rightarrow\infty}\mathcal{F}_{n'}^{\varepsilon_N\!,h_N}(\sitesVector^N,\gVector^N)-\eta\liminf_{N\rightarrow\infty}\mathcal{R}_{n'}^{\varepsilon_N\!,h_N}(\sitesVector^N,\gVector^N)\\
&\leq\limsup_{N\rightarrow\infty}\mathcal{F}_n^{\varepsilon_N\!,h_N}(\sitesVector^N,\gVector^N)+\limsup_{N\rightarrow\infty}\sum_{i=n+1,\ldots, n'} m_i^{\varepsilon_N\!,h_N}[\sitesVector^N,\gVector^N]\Phi(b_i^{\varepsilon_N\!,h_N}[\sitesVector^N,\gVector^N])\\
&\quad-\eta\liminf_{N\rightarrow\infty}\mathcal{R}_n^{\varepsilon_N\!,h_N}(\sitesVector^N,\gVector^N)-\eta\liminf_{N\rightarrow\infty}
\left(\mathcal{R}_{n'}^{\varepsilon_N\!,h_N}(\sitesVector^N,\gVector^N)-\mathcal{R}_n^{\varepsilon_N\!,h_N}(\sitesVector^N,\gVector^N)\right)\\
&\leq\limsup_{N\rightarrow\infty}\mathcal{F}_n^{\varepsilon_N\!,h_N}(\sitesVector^N,\gVector^N)-\eta\liminf_{N\rightarrow\infty}\mathcal{R}_n^{\varepsilon_N\!,h_N}(\sitesVector^N,\gVector^N)\leq\bar\Phi-\eta\liminf_{N\rightarrow\infty}\mathcal{R}_n^{\varepsilon_N\!,h_N}(\sitesVector^N,\gVector^N).
\end{align*}
We obtain $\liminf_{N\rightarrow\infty}\mathcal{R}_n^{\varepsilon_N\!,h_N}(\sitesVector^N,\gVector^N)\leq\frac{\bar\Phi-\Func_k^\eta[\overline\sitesVector,\overline\gVector]}{\eta}$. Without loss of generality, we can assume that for all $\delta>0$, $\mathcal{R}_n^{\varepsilon_N\!,h_N}(\sitesVector^N,\gVector^N)<\frac{\bar\Phi-\Func_k^\eta[\overline\sitesVector,\overline\gVector]+\delta}{\eta}$ holds for all $N\in\N$, $i=1,\ldots,l$. Then, we obtain the bounds
\begin{align*}
\dist(x_i^N,\domain) < \sqrt{ \frac{2(\bar \Phi - \mathcal{F}_k^\eta[\bar\sitesVector,\bar\gVector]+\delta)}{\eta m_i^*}} ,\quad
\vert x_i^N -x_j^N\vert > \sqrt{\frac{\eta m_i^*m_j^*}{4( \bar \Phi - \mathcal{F}_k^\eta[\bar\sitesVector, \bar\gVector]+\delta)}},
\end{align*}
for all pairwise different $i,j=1,\ldots,n$. Using the same arguments as in the proof of Theorem \ref{thm:maximizer}, one also finds analogous uniform bounds on $\vert\g^N\vert$. Hence, up to a subsequence, $(\sitesVector^N,\gVector^N)_{N\in\N}$ converges to some $(\sitesVector^*,\gVector^*)\in\sitesSet\times\R^n$. By the above computations, the fact that the masses $m_i^{\varepsilon_N\!,h_N}$ do not vanish in the limit and Proposition \ref{prop:gamma_eps_h}, we finally obtain
\begin{align*}
\Func_k^\eta[\overline\sitesVector,\overline\gVector]\leq\lim_{N\rightarrow\infty}\mathcal{F}_n^{\varepsilon_N\!,h_N}(\sitesVector^N,\gVector^N)-\eta\mathcal{R}_n^{\varepsilon_N\!,h_N}(\sitesVector^N,\gVector^N)=\lim_{N\rightarrow\infty}\mathcal{F}_n^{\eta,\varepsilon_N\!,h_N}(\sitesVector^N,\gVector^N)=\mathcal{F}_n^{\eta}(\sitesVector^*,\gVector^*)
\end{align*}}
\end{proof}

To numerically implement the maximization of \ref{infosec_pob_reg} via a gradient ascent approach we 
need to compute the gradient of $\mathcal{F}_n^\varepsilon$.  We specifically 
obtain the following derivatives for spatially discrete quantities using $z=x_k$ or $z=\g_k$ for $k=1,\ldots, n$
\begin{align*}
\partial_{z}\mathcal{F}_n^{\varepsilon,h}[\sitesVector,\gVector] &= 
\sum_{j=1,\ldots, n} 
\partial_z m_j^{\varepsilon,h}[\sitesVector,\gVector] \Phi(b_j^{\varepsilon,h}[\sitesVector,\gVector])
+  m_j^{\varepsilon,h}[\sitesVector,\gVector] \partial_z \Phi(b_j^{\varepsilon,h}[\sitesVector,\gVector]),\\
\partial_{x_k}\mathcal{R}_n^{\varepsilon,h}[\sitesVector,\gVector] &=
2\sum_{\alpha\in \mathcal{I}^d}(x_k-y^\alpha_h) \chi_{k,\alpha}^{\varepsilon,h}[\sitesVector,\gVector]\nu^\alpha+\sum_{i=1,\ldots, n}\sum_{\alpha\in \mathcal{I}^d}\vert y^\alpha_h-x_i\vert^2\partial_{x_k}\chi_{i,\alpha}^{\varepsilon,h}[\sitesVector,\gVector]\nu^\alpha\\
&+\sum_{1\leq i,j \leq n \atop {i\neq j}}\left(\frac{\partial_{x_k}m_i^{\varepsilon,h}[\sitesVector,\gVector]m_j^{\varepsilon,h}[\sitesVector,\gVector]}{\vert x_i-x_j\vert^2}+\frac{m_i^{\varepsilon,h}[\sitesVector,\gVector]\partial_{x_k}m_j^{\varepsilon,h}[\sitesVector,\gVector]}{\vert x_i-x_j\vert^2}\right.\\
&-\left.2\frac{m_i^{\varepsilon,h}[\sitesVector,\gVector]m_j^{\varepsilon,h}[\sitesVector,\gVector]}{\vert x_i-x_j\vert^4}((x_i-x_j)\delta_{ik}+(x_j-x_i)\delta_{jk})\right) ,\\
\partial_{\g_k}\mathcal{R}_n^{\varepsilon,h}[\sitesVector,\gVector] &=
\sum_{i=1,\ldots, n}\sum_{\alpha\in \mathcal{I}^d}\vert y^\alpha_h-x_i\vert^2\partial_{\g_k}\chi_{i,\alpha}^{\varepsilon,h}[\sitesVector,\gVector] \nu^\alpha\\
&+\sum_{1\leq i,j \leq n \atop {i\neq j}}\left(\frac{\partial_{\g_k}m_i^{\varepsilon,h}[\sitesVector,\gVector]m_j^{\varepsilon,h}[\sitesVector,\gVector]}{\vert x_i-x_j\vert^2}+\frac{m_i^{\varepsilon,h}[\sitesVector,\gVector]\partial_{\g_k}m_j^{\varepsilon,h}[\sitesVector,\gVector]}{\vert x_i-x_j\vert^2}\right) ,\\
\partial_z \Phi(b_j^{\varepsilon,h}[\sitesVector,\gVector]) &= \nabla \Phi(b_j^{\varepsilon,h}[\sitesVector,\gVector]) 
\cdot \partial_z b_j^{\varepsilon,h}[\sitesVector,\gVector], \\
\partial_z m_j^{\varepsilon,h}[\sitesVector,\gVector] &= 
\sum_{\alpha\in \mathcal{I}^d} \partial_z  \chi_{j,\alpha}^{\varepsilon,h}[\sitesVector,\gVector] \nu^\alpha, \\
\partial_z b_j^{\varepsilon,h}[\sitesVector,\gVector] &=
\frac{1}{m_j^{\varepsilon,h}[\sitesVector,\gVector]} 
\sum_{\alpha\in \mathcal{I}^d} y^\alpha_h \partial_z  \chi_{j,\alpha}^{\varepsilon,h}[\sitesVector,\gVector] \nu^\alpha  
- \frac{b_j^{\varepsilon,h}[\sitesVector,\gVector]  \partial_z m_j^{\varepsilon,h}[\sitesVector,\gVector]}{m_j^{\varepsilon,h}[\sitesVector,\gVector]^2},\\
\partial_{x_k} \chi_{j,\alpha}^{\varepsilon,h}[\sitesVector,\gVector] &=
\tfrac{2 x_j}{\varepsilon} \left(\chi_{j,\alpha}^{\varepsilon,h}[\sitesVector,\gVector] -\delta_{kj}\right)  \chi_{j,\alpha}^{\varepsilon,h}[\sitesVector,\gVector], \\
\partial_{\g_k} \chi_{j,\alpha}^{\varepsilon,h}[\sitesVector,\gVector] &=
-\tfrac{1}{\varepsilon} \left(\chi_{j,\alpha}^{\varepsilon,h}[\sitesVector,\gVector] -\delta_{kj}\right)  \chi_{j,\alpha}^{\varepsilon,h}[\sitesVector,\gVector].
\end{align*}

\section{Numerical experiments}\label{sec:numerics}

Now, we will apply the above derived method to compute the optimal partition of the unit  cube $[0,1]^2$ for given $n$, a function $\Phi$ and a probability density $\nu$. In the presented numerical results, we assume $\nu$ to be the Lebesgue measure on the unit cube. Moreover, for every numerical experiment in this and the next section, we made use of the sparse multi-scale algorithm by Schmitzer \cite{Sc19} and the implementation in \cite{ChFe21} to efficiently compute the Sinkhorn iterations. To optimize the parameters of the power diagrams, we used the \emph{Adam} optimizer (cf. \cite{Ki14}) as a stochastic gradient ascent method. Unless otherwise explicitly stated, we use $\eta=0$, $N=256$ (i.e. grid size $h=1/256$), and initialize our algorithm with $n=12$ sites/weights for each of the experiments in this and the next section. On each plot in this section, the barycenters (triangles) and sites (circles) are plotted, for visualisation purposes the latter ones only if inside the unit cube. On the left of each plot, the respective values of the component of the $\g$ vector is also plotted.

In Figure~\ref{fig:blur_param} we show the dependence of the optimal numerical solution on the entropy parameter $\varepsilon$. The interfaces between the Laguerre cells become fuzzier with increasing $\varepsilon$, whereas the structure of the optimal solutions does not change much.
\begin{figure}[ht]
\resizebox{\linewidth}{!}{
\tikzstyle{frame} = [line width=1.8pt, draw=red,inner sep=0.01em]
\begin{tikzpicture}
\begin{scope}[scale=1.0]

\begin{scope}[shift={(0,-11.02)}]
\node[anchor=south west] at (0.,0.) 
{\includegraphics[width=0.2\linewidth]{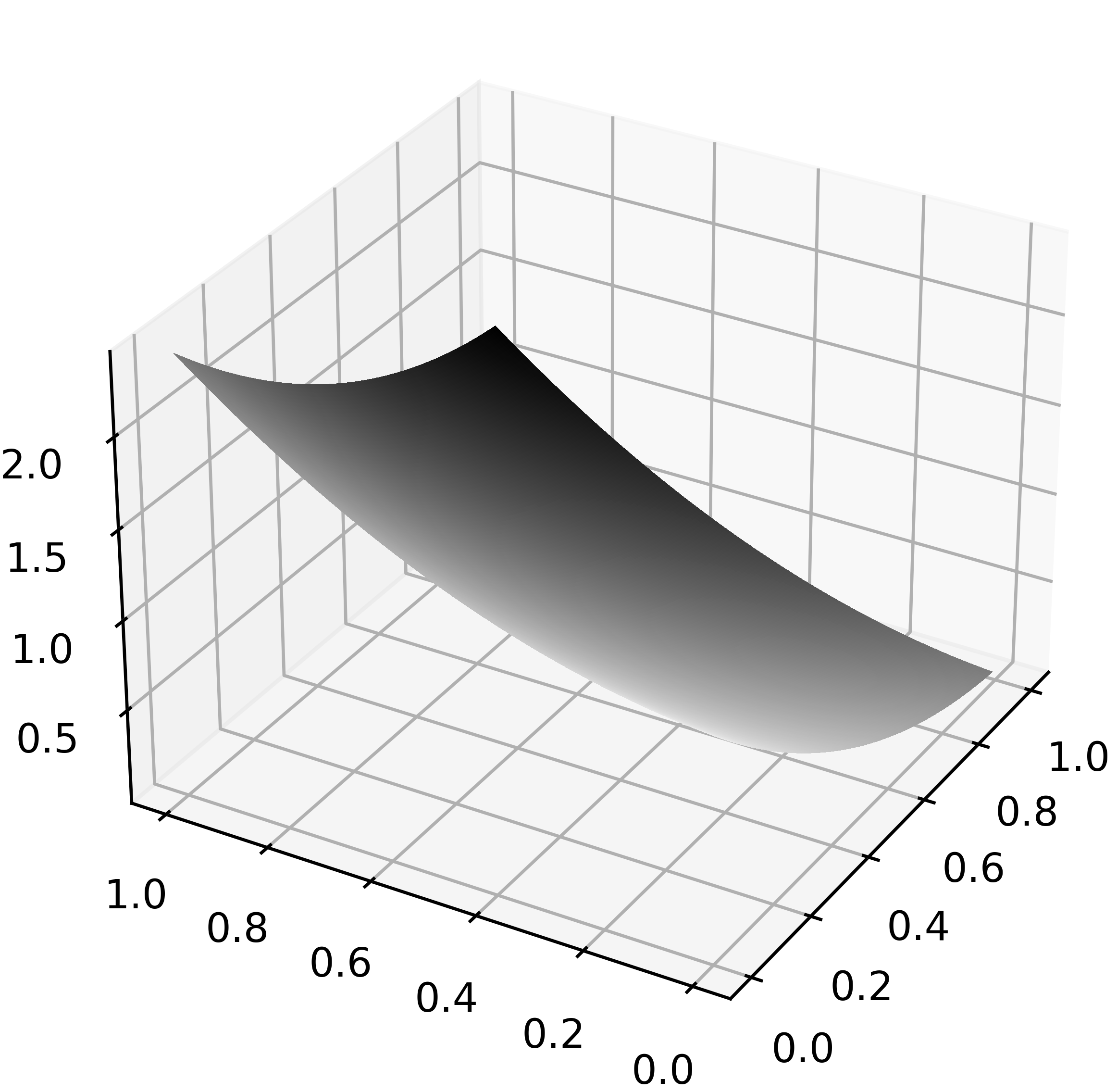}};
\node[anchor=south west] at (3.8,0.) 
{\includegraphics[width=0.2\linewidth]{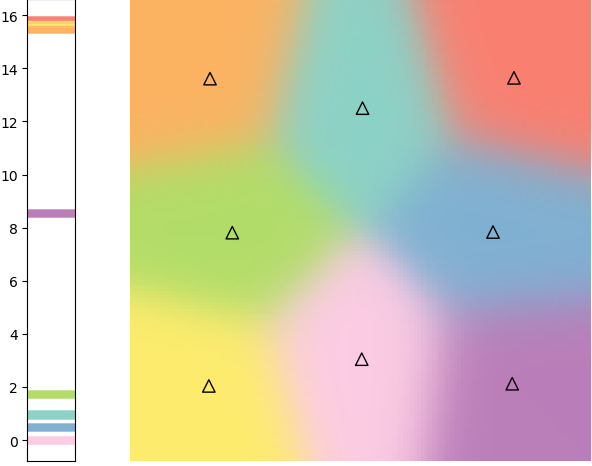}};
\node[anchor=south west] at (7.6,0.)
{\includegraphics[width=0.2\linewidth]{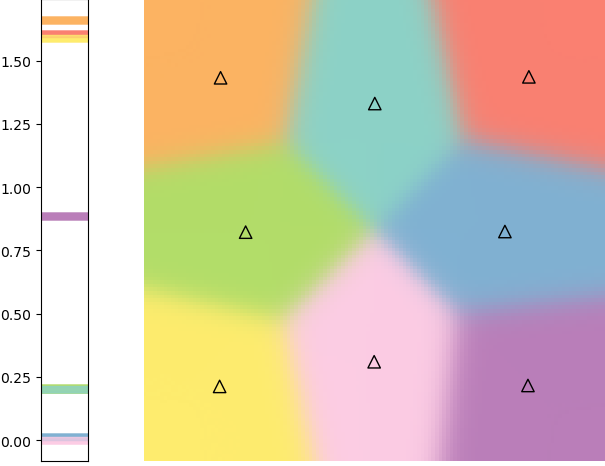}};
\node[anchor=south west] at (11.4, 0.)
{\includegraphics[width=0.2\linewidth]{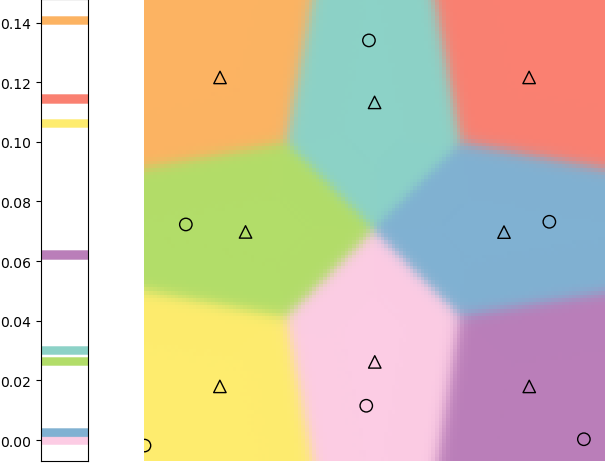}};
\node[anchor=south west] at (15.2,0.)
{\includegraphics[width=0.2\linewidth]{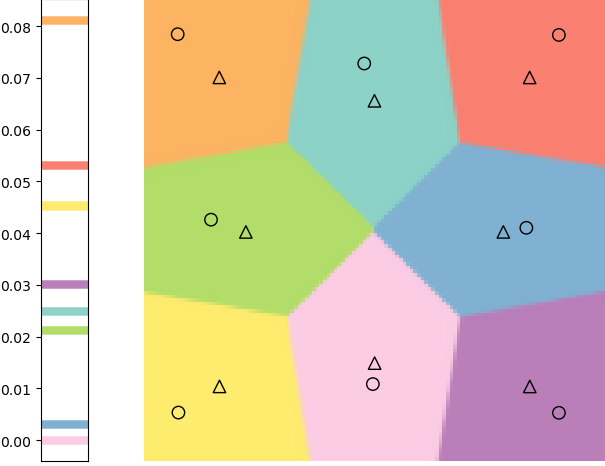}};
\end{scope}
\end{scope}
\end{tikzpicture}
}
\caption{Convergence of the optimal power diagram for different entropy parameters $\varepsilon=25,5,1,0.2.$ (four right-most plots), where the blur parameter values are given in units of $1/N$, for $N=128$. The function $\Phi$ is plotted on the left-most panel.}
\label{fig:blur_param}
\end{figure} 

Next, we plot in Figure~\ref{fig:counterexample} the optimal configurations for different values of $\eta$ 
for a function $\Phi$ (left), which has global/local maxima of equal value at the points $(\tfrac12, \tfrac14), (\tfrac34, \tfrac34)$, and $(\tfrac14, \tfrac34)$.  We know that the solution for $\eta=0$ is not contained in the space of power diagrams, but
we observe for $\eta \to 0$ the convergence of the solutions.
Indeed, for large values of $\eta$ a single cell has positive mass and no other cells contribute to the cost.  
For vanishing $\eta$, two additional cells appear as optimal configurations and the three cells meet in a triple-point converging to $(\tfrac12, \tfrac12)$ with one angle  converging to $\pi$.
 
 \begin{figure}[ht]
\resizebox{\linewidth}{!}{
\tikzstyle{frame} = [line width=1.8pt, draw=red,inner sep=0.01em]
\begin{tikzpicture}
\begin{scope}[scale=1.0]

\begin{scope}[shift={(0,-11.02)}]
\node[anchor=south west] at (0.,0.) 
{\includegraphics[width=0.2\linewidth]{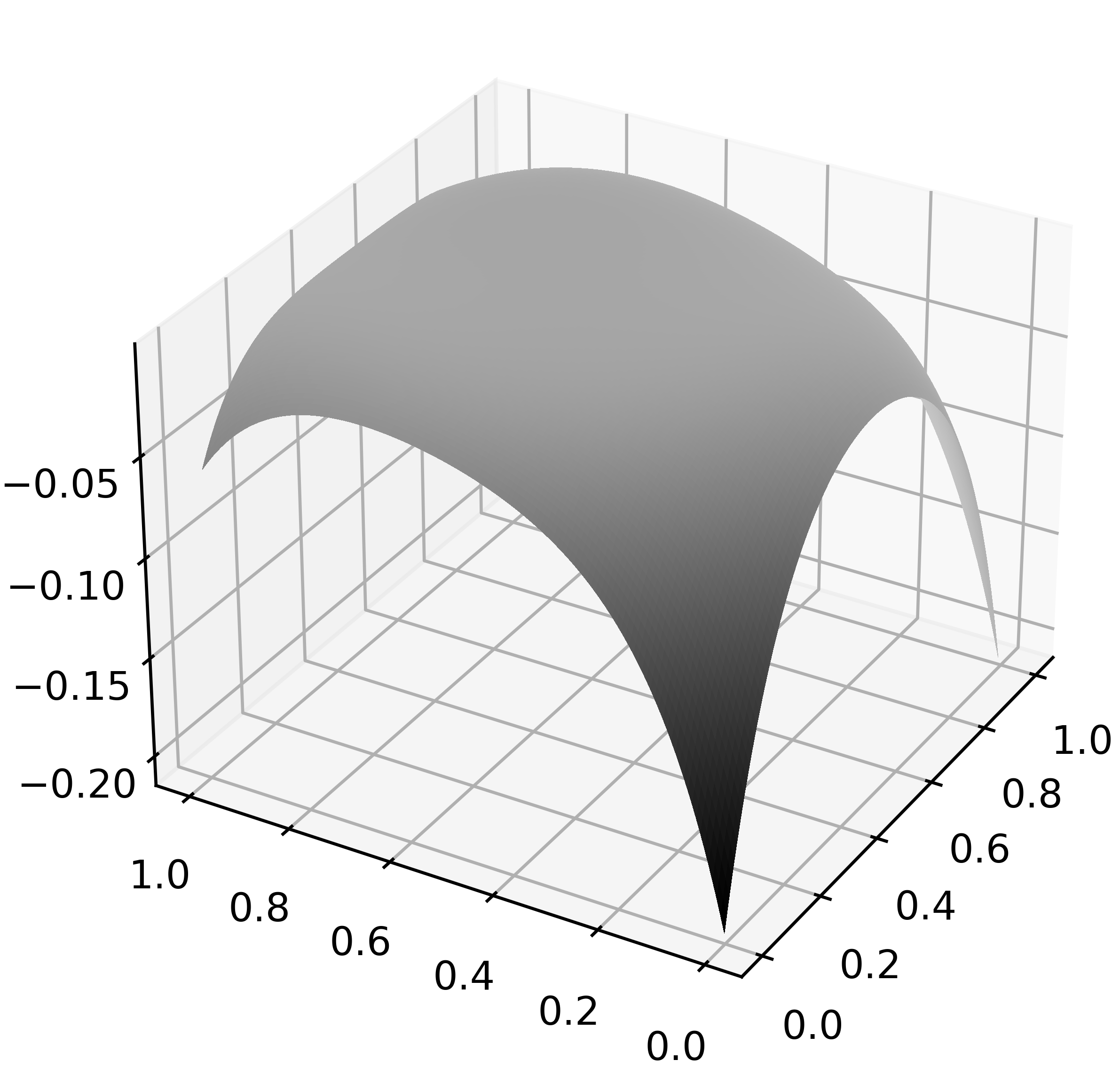}};
\begin{scope}[shift={(3.8,0.)}]
\node[anchor=south west] at (0.,0.) 
{\includegraphics[width=0.2\linewidth]{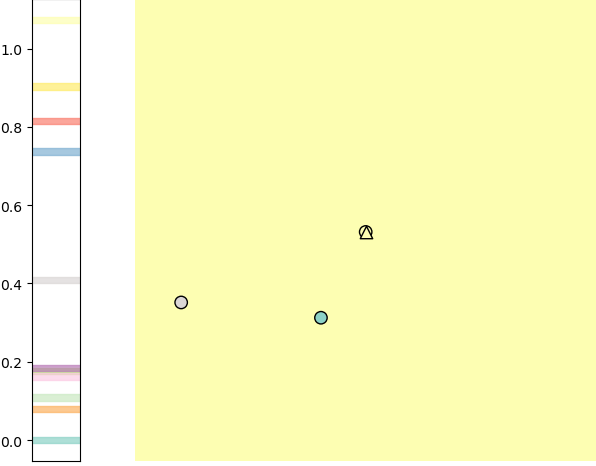}};
\node[anchor=south west] at (3.8,0.) 
{\includegraphics[width=0.2\linewidth]{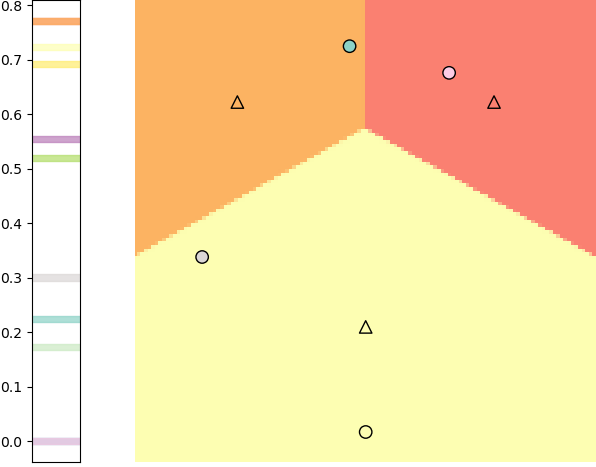}};
\node[anchor=south west] at (7.6,0.)
{\includegraphics[width=0.2\linewidth]{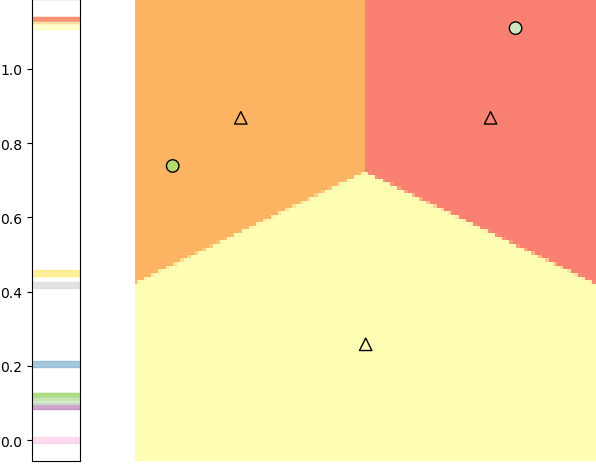}};
\node[anchor=south west] at (11.4, 0.)
{\includegraphics[width=0.2\linewidth]{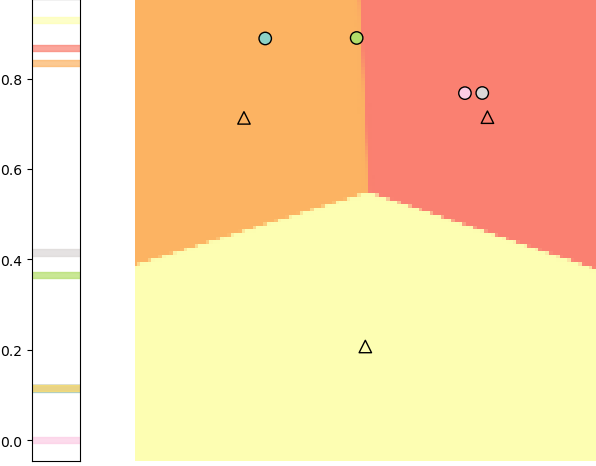}};
\node[anchor=south west] at (15.2,0.)
{\includegraphics[width=0.2\linewidth]{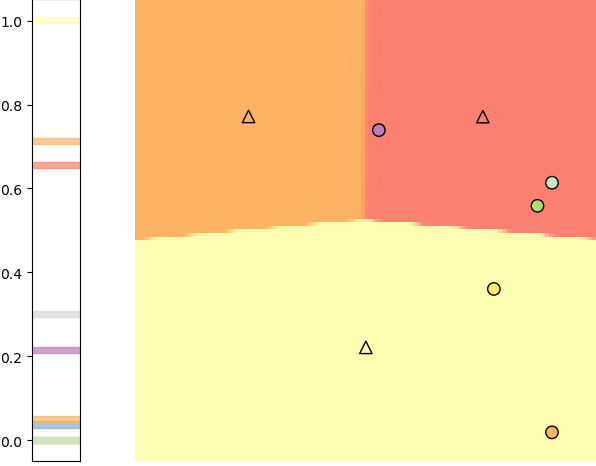}};
\end{scope}
\end{scope}
\end{scope}
\end{tikzpicture}
}
\caption{Convergence of the optimal power diagram for a function $\Phi$ (column (i)) with global maxima at $(0.25, 0.75), (0.75,0.75)$ and $(0.5,0.25)$ for regularization parameter $\eta=1\mathrm{e}{-1}, 1\mathrm{e}{-2}, 1\mathrm{e}{-3}, 1\mathrm{e}{-4}, 1\mathrm{e}{-5}$ (columns (ii)-(vi)).}
\label{fig:counterexample}
\end{figure} 

Finally, we check how our algorithm deals with fusing/pushing cells away, when the optimal solution requires a smaller number of sites/weights than what the algorithm was initialized with. To this end, in Figure~\ref{fig:concave_example} 
we consider a concave function $\Phi$, that has the trivial partition as solution, and plot some iterations of our algorithm that show the proper recovery of this solution, i.e. cells disappear by either pushing the respective sites away (orange, green cells), or by increasing the difference between the $\g$-values of the purple cell and those of the respective cells (beige, blue).

\begin{figure}[ht]
\resizebox{\linewidth}{!}{
\tikzstyle{frame} = [line width=1.8pt, draw=red,inner sep=0.01em]
\begin{tikzpicture}
\begin{scope}[scale=1.0]

\begin{scope}[shift={(0,-11.02)}]
\node[anchor=south west] at (0.,0.) 
{\includegraphics[width=0.2\linewidth]{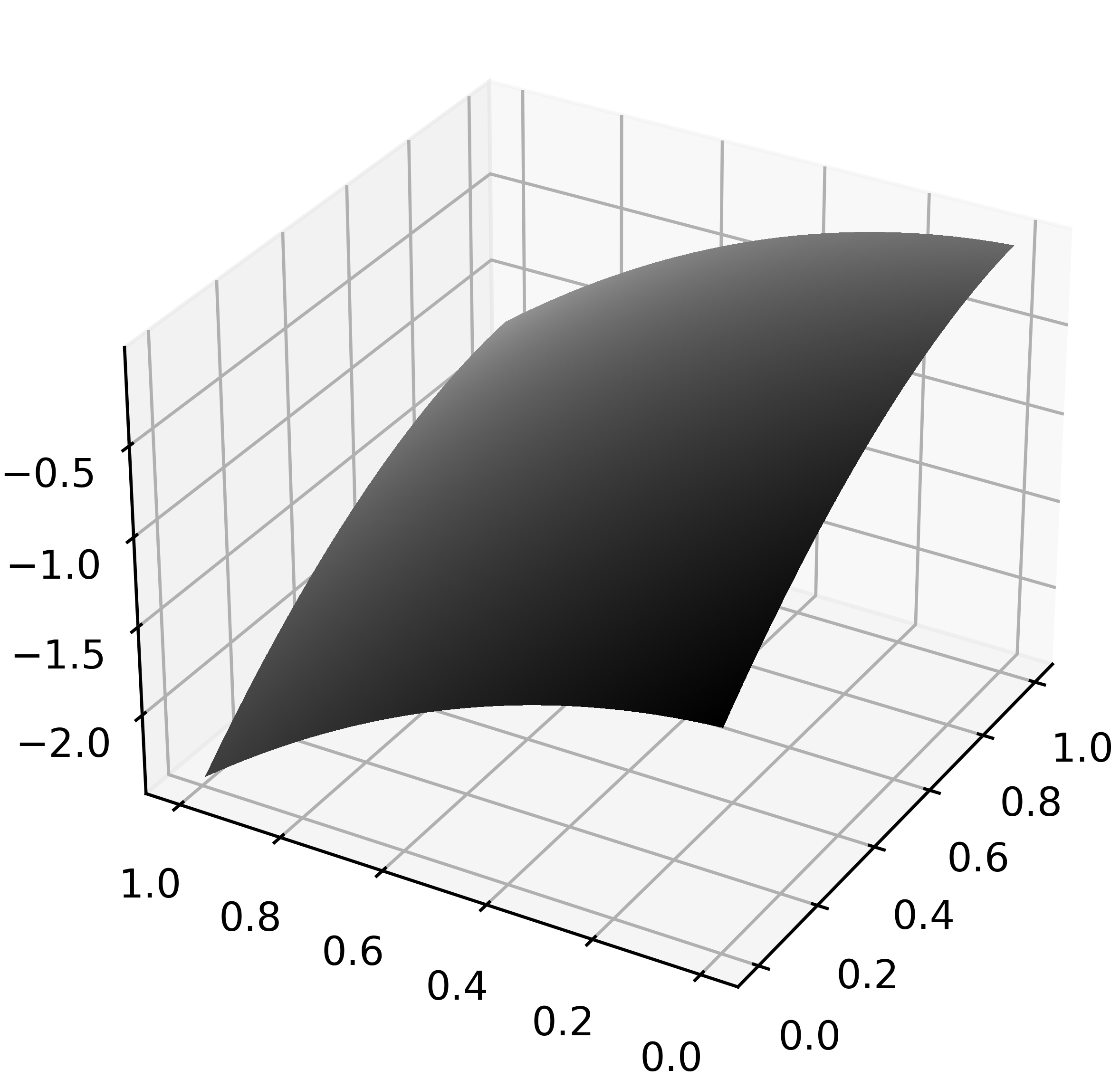}};
\node[anchor=south west] at (3.8,0.) 
{\includegraphics[width=0.2\linewidth]{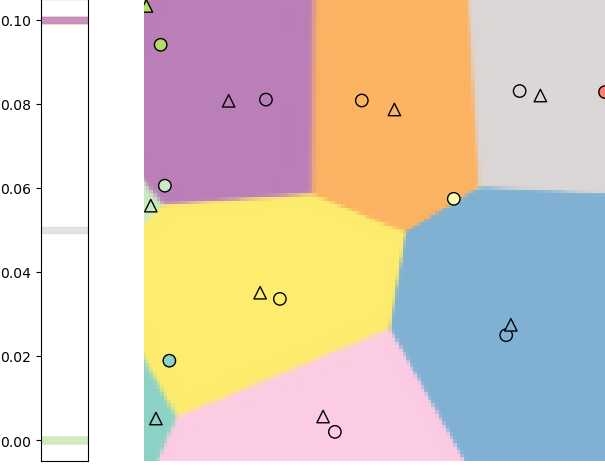}};
\node[anchor=south west] at (7.6,0.)
{\includegraphics[width=0.2\linewidth]{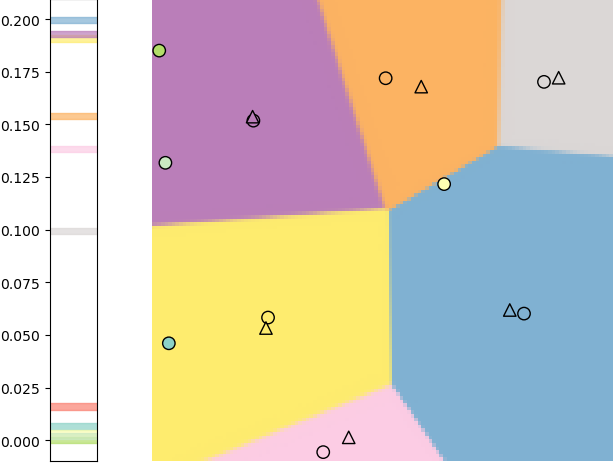}};
\node[anchor=south west] at (11.4, 0.)
{\includegraphics[width=0.2\linewidth]{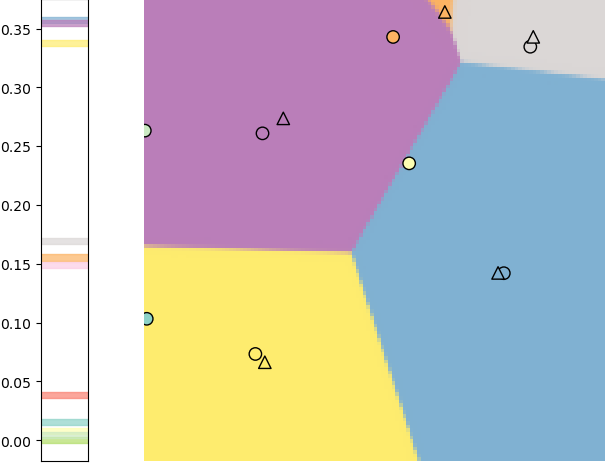}};
\node[anchor=south west] at (15.2,0.)
{\includegraphics[width=0.2\linewidth]{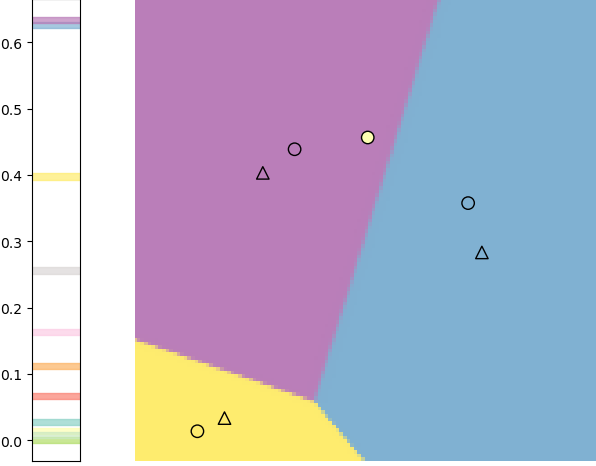}};
\node[anchor=south west] at (19.0,0.)
{\includegraphics[width=0.2\linewidth]{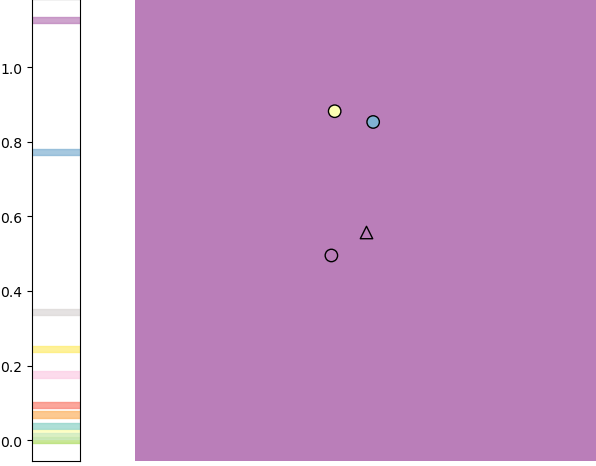}};
\end{scope}
\end{scope}
\end{tikzpicture}
}
\caption{Convergence of the optimal power diagram for a concave function $\Phi$ (left). One sees how the algorithm automatically pushes cells outside of the relevant unit square to enact the trivial solution. Plotted are the computed power diagrams after iterations $it=1, 2, 4, 8, 16$ (five right-most plots).}
\label{fig:concave_example}
\end{figure} 

\section{Application to information design}\label{sec:application}
Finally, we consider the application of our algorithm to a particular problem in information design: the monopolist's problem. In this problem, a seller (sender) can disclose information about the qualities of the products she sells.
There are two objects available individually at fixed prices $p_{1},p_{2}$, and the bundle of both objects can be bought at fixed price $p_3=p_1+p_2+\Delta$ for some bundling ``surcharge'' $\Delta > 0$ or ``discount'' $\Delta < 0$.
A consumer (receiver) has valuations $\mathbf{v}=(v_{1},v_{2})$ per unit of quality, distributed according to the Lebesgue measure on the unit square $[0,1]^{2}$. 
The quality of the objects $\mathbf{q}=(q_{1},q_{2})$ is distributed on $[\underline{q},\overline{q}]^{2}$ according to a measure $\mu $ that is absolutely continuous with respect to the Lebesgue measure. Throughout this section, we assume that $\mu$ is the (re-scaled) Lebesgue measure; the general case works analogously. The realized qualities are known to the seller but not to the buyer, and the realized valuations are known to the buyer but not to the seller.

We first consider a buyer with unit demand and then consider a buyer who demands more than one object and has additive valuations.
A buyer has \emph{unit demand} if she values at most one object. If the seller provides a signal about the qualities such that the buyer believes the expected qualities of the objects are $(q_{1},q_{2})$, then a buyer with unit demand buys only good $i=1,2$ if
\[
    q_{i}v_{i}-p_{i}> \max \{0,\ q_{-i}v_{-i}-p_{-i}\}
\]
and buys nothing if 
\[
    \max_{i}\{q_{i}v_{i}-p_{i}\}<0.
\]
Here, we ignore ties, which have probability zero of occurring.
If a buyer with \emph{additive valuations} buys only object $i$, her payoff is $q_i v_i-p_i$ but if she buys both objects her payoff is $q_1 v_1 + q_2 v_2 -p_3$.
Therefore, a buyer with additive valuations buys only object $i$ if
\[
    q_{i}v_{i}-p_{i}> \max \{0,\ q_{-i}v_{-i}-p_{-i}, \ q_1 v_1+q_2v_2 -p_3\},
\]
buys both objects if
\[
    q_1 v_1+q_2v_2 -p_3> \max \{0,\ q_{1}v_{1}-p_{1}, \ q_{2}v_{2}-p_{2}\},
\]
and buys nothing otherwise.

For fixed expected qualities $(q_{1},q_{2}),$ let $C_{i}(q_{1},q_{2})$ be the probability assigned to the set of valuations for which the buyer only buys object $i=1,2,$ let $C_3(q_1,q_2)$ denote the probability assigned to the set of valuations for which the consumer buys both objects, and let $C_{0}(q_{1},q_{2})$ be the probability assigned to the set of valuations where the consumer buys nothing. Recall that these depend on whether we consider a consumer with unit demand or with additive valuations, and are computed from the uniform distribution of valuations: these are the areas of the respective convex polygons defined by the above inequalities.
The seller's revenue is then given by 
\[
    R(q_{1},q_{2})=\sum_{i=1}^{3}p_{i}C_{i}(q_{1},q_{2}).
\]%
 
Recalling the characterization of exposed points, the seller chooses
a Laguerre diagram $\pi =\{D^{1},D^{2},\ldots,D^{n}\}$ of $[\underline{q},%
\overline{q}]^{2}$ with respective barycenters $\mathbf{q}^{j}=(q_{1}^{j}%
\mathbf{,}q_{2}^{j}),\ j=1,2,\ldots,n.$ In other words, for each realization of
qualities the seller reveals to the consumer only to which cell the qualities belong.
The consumer then updates her belief about the expected qualities to the barycenter $\mathbf{q}^{j}$ of the cell $D^{j}$ which contains the true qualities.

The designer's expected revenue is then given by 
\[
\sum_{j=1}^{n}\mu (D^{j})R(q_{1}^{j}\mathbf{,}q_{2}^{j}) 
\]
and the designer chooses a Laguerre diagram to maximize this expected revenue. 

\paragraph{Unit demand.} We first consider a buyer with unit demand. 
Figure~\ref{fig:infodisc_unit_demand} shows that the seller discloses only a coarse signal about the qualities of the products. Each cell corresponds to one expected quality pair of the products which we indicate by a triangle in the figure. 
There are at most 4 cells in the optimal signal:  These cells can be roughly interpreted as corresponding to different quality pairs: (1) the orange cell corresponds to both products being of low expected quality, (2) the purple cell corresponds to product 1 being of low quality and product 2 being of high quality, (3) the red cell corresponds to product 1 being of high quality and product 2 being of low quality, and (4) the green cell corresponds to both products being of high  quality.
In the orange cells, the expected qualities are so low that the buyer never buys either of the products, independent of their valuations. In the red and purple cells, the expected quality of the lower-quality object is so low, that the buyer will never buy the lower-quality object; the buyer will either buy the higher-quality object or buy nothing at all.
As the price of product 2 increases, in the optimal information policy, a signal indicating a high quality of product 2 becomes more informative in that it indicates a higher expected quality. This offsets the higher price and still induces some buyer types to purchase the more expensive product.  If the price of product 2 becomes too high, no types will buy product 2 and the optimal Laguerre diagram has only two cells, as the right-most panel illustrates.

Under the optimal Laguerre diagram, the seller does not provide full information to the buyer even though that would raise efficiency. By revealing only imprecise information, the buyer's information rents are reduced and the seller's revenue raised. To evaluate the  benefit of choosing an optimal information policy induced by a Laguerre diagram, we consider additional, non-optimal information policies, as benchmarks. 
Table~\ref{tab:obj_unit_demand} specifies the revenue 
generated under various information policies. It shows in row
(i) the different values of the price $p_2$, in row 
(ii) the revenue $R_{opt}$ induced by the optimal Laguerre diagram partition computed with our algorithm, 
in row (iii) the revenue $R(1, 1)$  for an 
information policy where no information is given to the buyer, and
in row (iv) the revenue for an additional benchmark information policy based on a partition generated with Lloyd's algorithm. Here, for the same number of cells as in the computed optimal Laguerre diagram, we generate with Lloyd's algorithm a Laguerre diagram partition which imposes the barycenter of each cell to coincide with the respective site, cf.~\cite{Ll82}.  The respective Lloyd diagrams 
used for benchmarking are exemplified in Figure~\ref{fig:RevenuePlot2}. 
Row (v) of Table~\ref{tab:obj_unit_demand} shows the revenue $\mathbb{E}(R)$ in case of the full information policy, 
and in row (vi) the percentage increase in revenue of the optimal power diagram policy compared to the full information policy is displayed. As one can see, optimal information design creates significant value to the seller: in this example it increases profits relative to full information in excess of $10\%$.

Figure~\ref{fig:infodisc_unit_demand_qminus} shows the optimal Laguerre diagrams for prices $p_1=1$ and $p_2=1.25$ and various values of the lower bound on the quality. As the lower bound increases, the optimal signals become less likely to contain significant information about the quality of object 2. The corresponding revenue and the revenue under alternative benchmarks is shown in Table~\ref{tab:obj_unit_demand_qminus}.
\begin{table}[ht]
\begin{center}
\begin{tabular}{ c | c | c | c | c | c }                
$p_2$ & 1 & 1.25 & 1.5 & 1.75 & 2  \\ 
\hline \hline 
$\mathbf{R_{opt}}$ & $\mathbf{0.3153}$ & $\mathbf{0.2648}$ & $\mathbf{0.2175}$ & $\mathbf{0.1839}$ & $\mathbf{0.1716}$\\ \hline                   
$R(1, 1)$ & $0.0000$ & $0.0000$ & $0.0000$ & $0.0000$ & $0.0000$\\ \hline
$R_{Lloyd}$ & $0.3056$ & $0.2543$ & $0.1667$ & $0.1667$ & $0.1667$\\ \hline
$\mathbb{E}(R)$ & $0.2833$ & $0.2407$ & $0.1977$ & $0.1657$ & $0.1534$\\ \hline
$pp (\%)$ & $11.30$ & $10.01$ & $10.01$ & $10.98$ & $11.86$
\end{tabular}
\caption{Monopolist's problem with unit demand, price $p_1=1$, quality boundaries $\underline{q}=0$ and $\overline{q}=2$ for different prices $p_2$ (first row) under selected information policies. \label{tab:obj_unit_demand}}
\end{center} \end{table}
\begin{figure}[h]
\resizebox{\linewidth}{!}{
\tikzstyle{frame} = [line width=1.8pt, draw=red,inner sep=0.01em]
\begin{tikzpicture}
\begin{scope}[scale=1.0]
\begin{scope}[shift={(0,0)}]
\node[anchor=south west] at (0.,0.) 
{\includegraphics[width=0.2\linewidth]{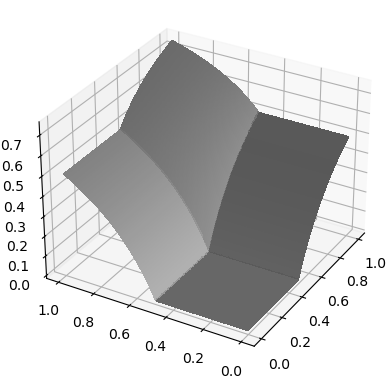}};
\node at (1.0,0.35) {\scriptsize{$q_1$}};
\node at (3.2,0.55) {\scriptsize{$q_2$}};
\node[anchor=south west] at (3.8,0.) 
{\includegraphics[width=0.2\linewidth]{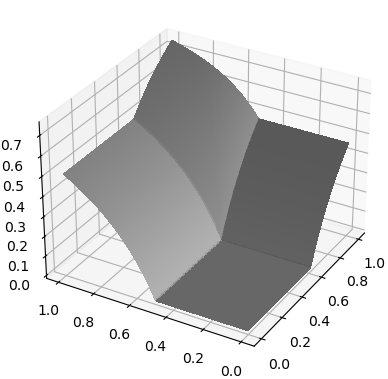}};
\node at (4.8,0.35) {\scriptsize{$q_1$}};
\node at (7.0,0.55) {\scriptsize{$q_2$}};
\node[anchor=south west] at (7.6,0.)
{\includegraphics[width=0.2\linewidth]{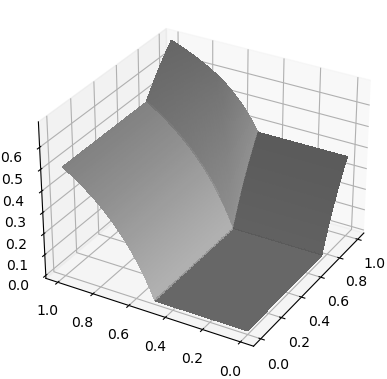}};
\node at (8.6,0.35) {\scriptsize{$q_1$}};
\node at (10.8,0.55) {\scriptsize{$q_2$}};
\node[anchor=south west] at (11.4, 0.)
{\includegraphics[width=0.2\linewidth]{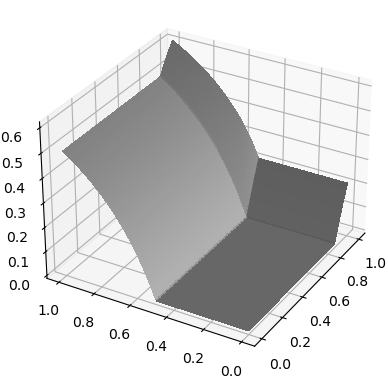}};
\node at (12.4,0.35) {\scriptsize{$q_1$}};
\node at (14.6,0.55) {\scriptsize{$q_2$}};
\node[anchor=south west] at (15.2,0.)
{\includegraphics[width=0.2\linewidth]{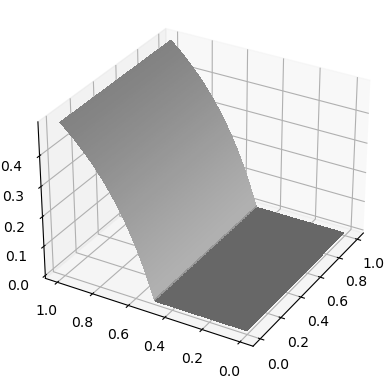}};
\node at (16.2,0.35) {\scriptsize{$q_1$}};
\node at (18.4,0.55) {\scriptsize{$q_2$}};
\end{scope}

\begin{scope}[shift={(0,-3.62)}]
\node[anchor=south west] at (0.,0.) 
{\includegraphics[width=0.2\linewidth]{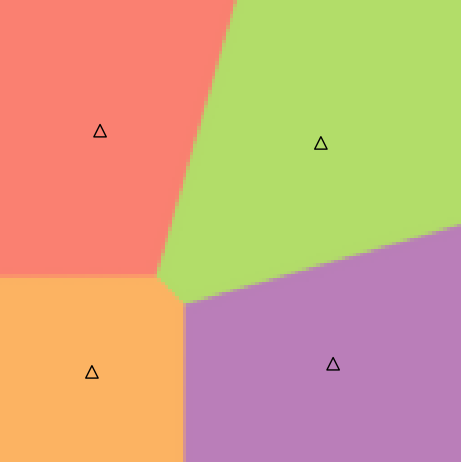}};
\node at (-0.1,1.8) {\small{$q_1$}};
\node at (1.9,-0.1) {\small{$q_2$}};
\node[anchor=south west] at (3.8,0.) 
{\includegraphics[width=0.2\linewidth]{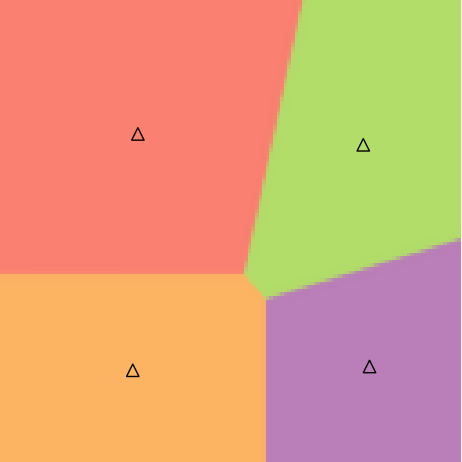}};
\node at (5.7,-0.1) {\small{$q_2$}};
\node[anchor=south west] at (7.6,0.)
{\includegraphics[width=0.2\linewidth]{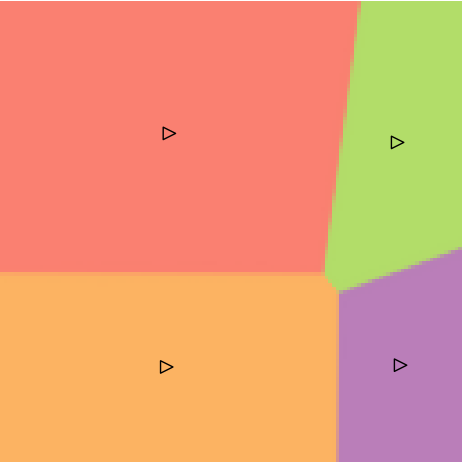}};
\node at (9.5,-0.1) {\small{$q_2$}};
\node[anchor=south west] at (11.4, 0.)
{\includegraphics[width=0.2\linewidth]{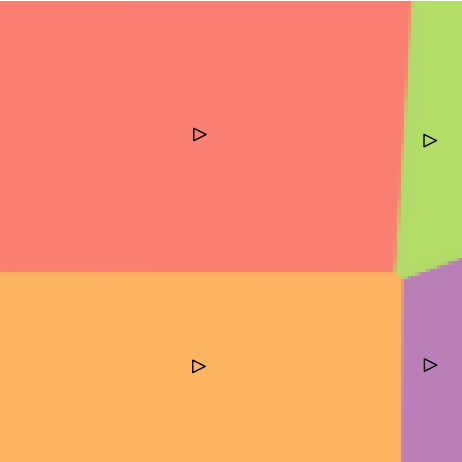}};
\node at (13.3,-0.1) {\small{$q_2$}};
\node[anchor=south west] at (15.2,0.)
{\includegraphics[width=0.2\linewidth]{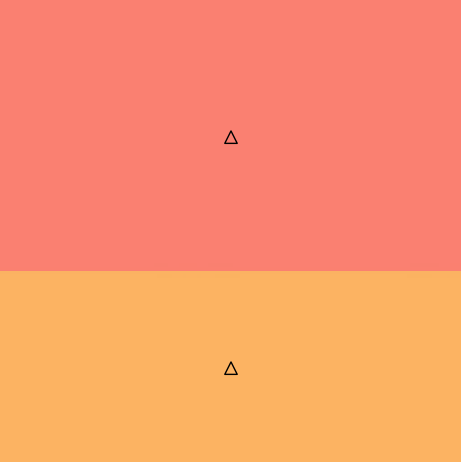}};
\node at (17.1,-0.1) {\small{$q_2$}};
\end{scope}
\end{scope}
\end{tikzpicture}
}
\caption{Optimal configurations for the monopolist's problem with unit demand, with prices $p_1=1$ and $p_2=1, 1.25, 1.5, 1.75, 2$ (second row, from left to right) and quality boundaries $\underline{q}=0$ and $\overline{q}=2$. 
The respective revenue function $R$ for each case is plotted in the first row.}
\label{fig:infodisc_unit_demand}
\end{figure}

\begin{table}[!h]
\begin{center}
\begin{tabular}{ c | c | c | c | c | c }                
$\underline q$ & 0.25 & 0.5 & 0.75 & 1. & 1.25  \\ 
\hline \hline 
$\mathbf{R_{opt}}$ & $\mathbf{0.2999}$ & $\mathbf{0.3457}$ & $\mathbf{0.4074}$ & $\mathbf{0.4853}$ & $\mathbf{0.5687}$\\ \hline                   
$R(1, 1)$ & $0.1111$ & $0.2000$ & $0.3564$ & $0.4757$ & $0.5687$\\ \hline
$R_{Lloyd}$ & $0.2838$ & $0.3105$ & $0.3604$ & $0.4583$ & $0.5687$\\ \hline
$\mathbb{E}(R)$ & $0.2728$ & $0.3146$ & $0.3714$ & $0.4528$ & $0.5508$\\ \hline
$pp (\%)$ & $11.30$ & $10.01$ & $10.01$ & $10.98$ & $11.86$
\end{tabular}
\caption{Monopolist problem with unit demand, prices $p_1=1$, $p_2=1.25$, upper quality bound $\overline{q}=2$ for different lower quality bounds $\underline q = 0.25, 0.5, 0.75, 1., 1.25$ (first row) under selected information policies. \label{tab:obj_unit_demand_qminus}}
\end{center} \end{table}
\begin{figure}[h]
\resizebox{\linewidth}{!}{
\tikzstyle{frame} = [line width=1.8pt, draw=red,inner sep=0.01em]
\begin{tikzpicture}
\begin{scope}[scale=1.0]
\begin{scope}[shift={(0,-0.)}]
\node[anchor=south west] at (0.,0.) 
{\includegraphics[width=0.2\linewidth]{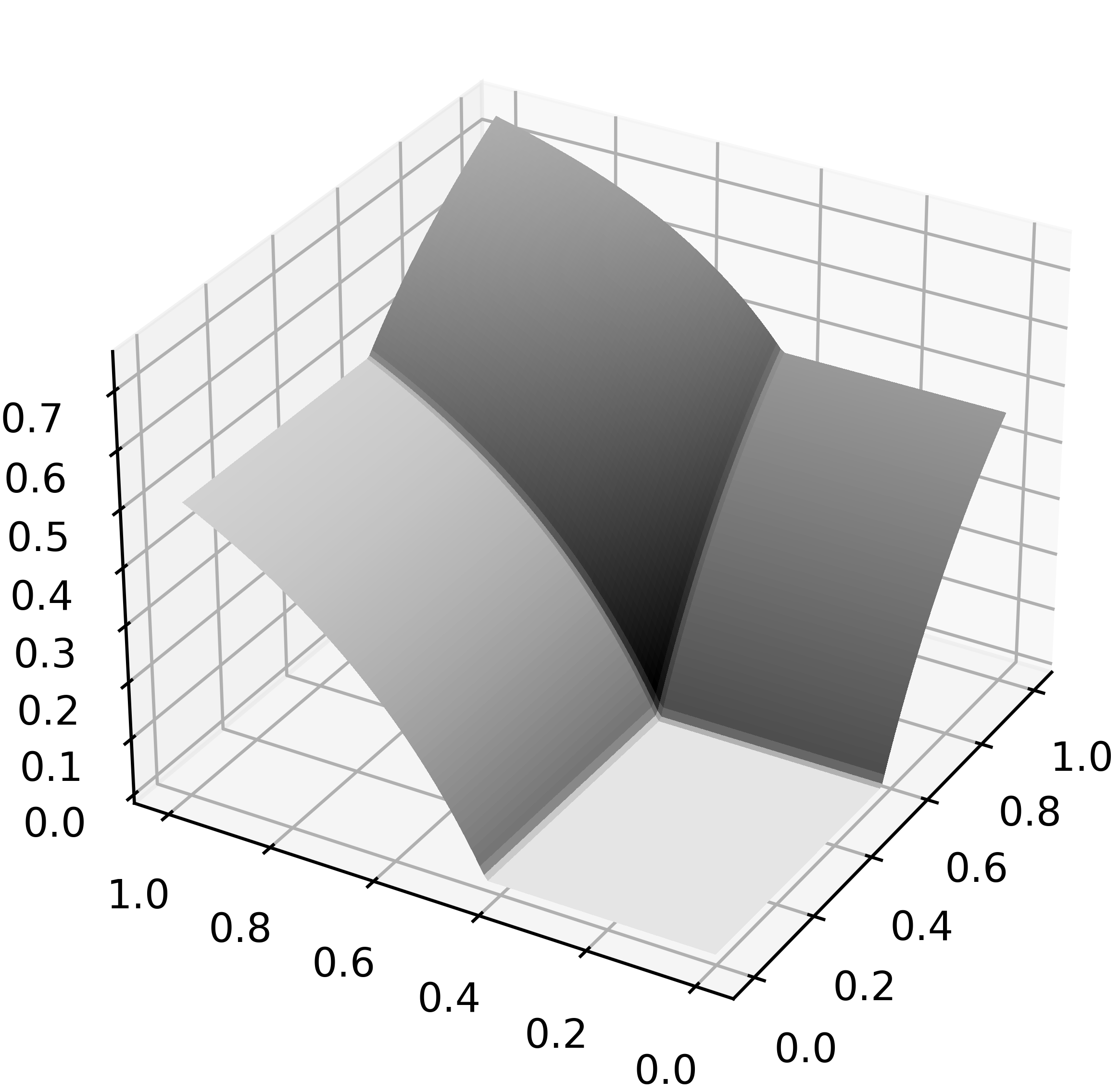}};
\node[anchor=south west] at (3.8,0.) 
{\includegraphics[width=0.2\linewidth]{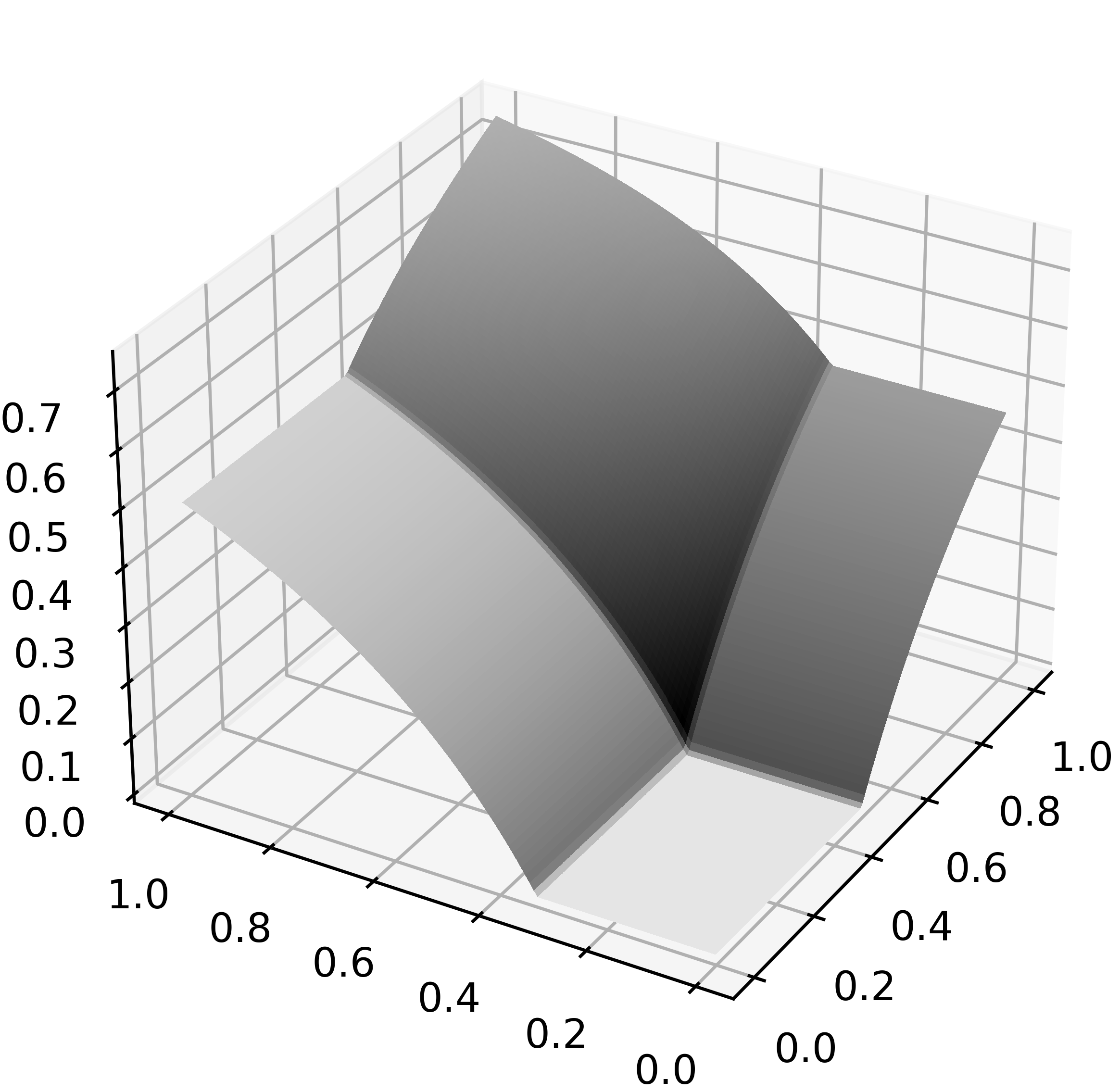}};
\node[anchor=south west] at (7.6,0.)
{\includegraphics[width=0.2\linewidth]{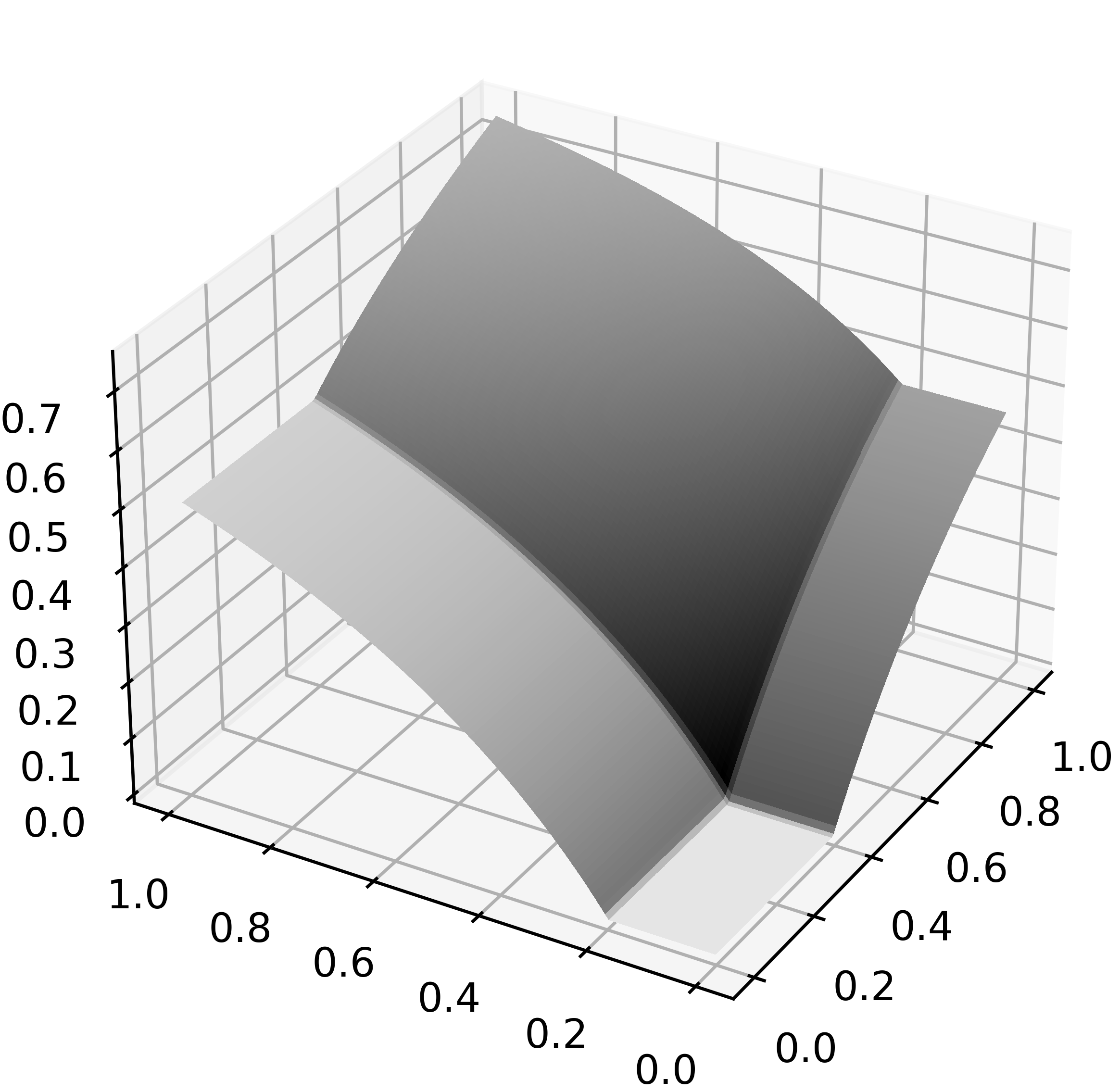}};
\node[anchor=south west] at (11.4, 0.)
{\includegraphics[width=0.2\linewidth]{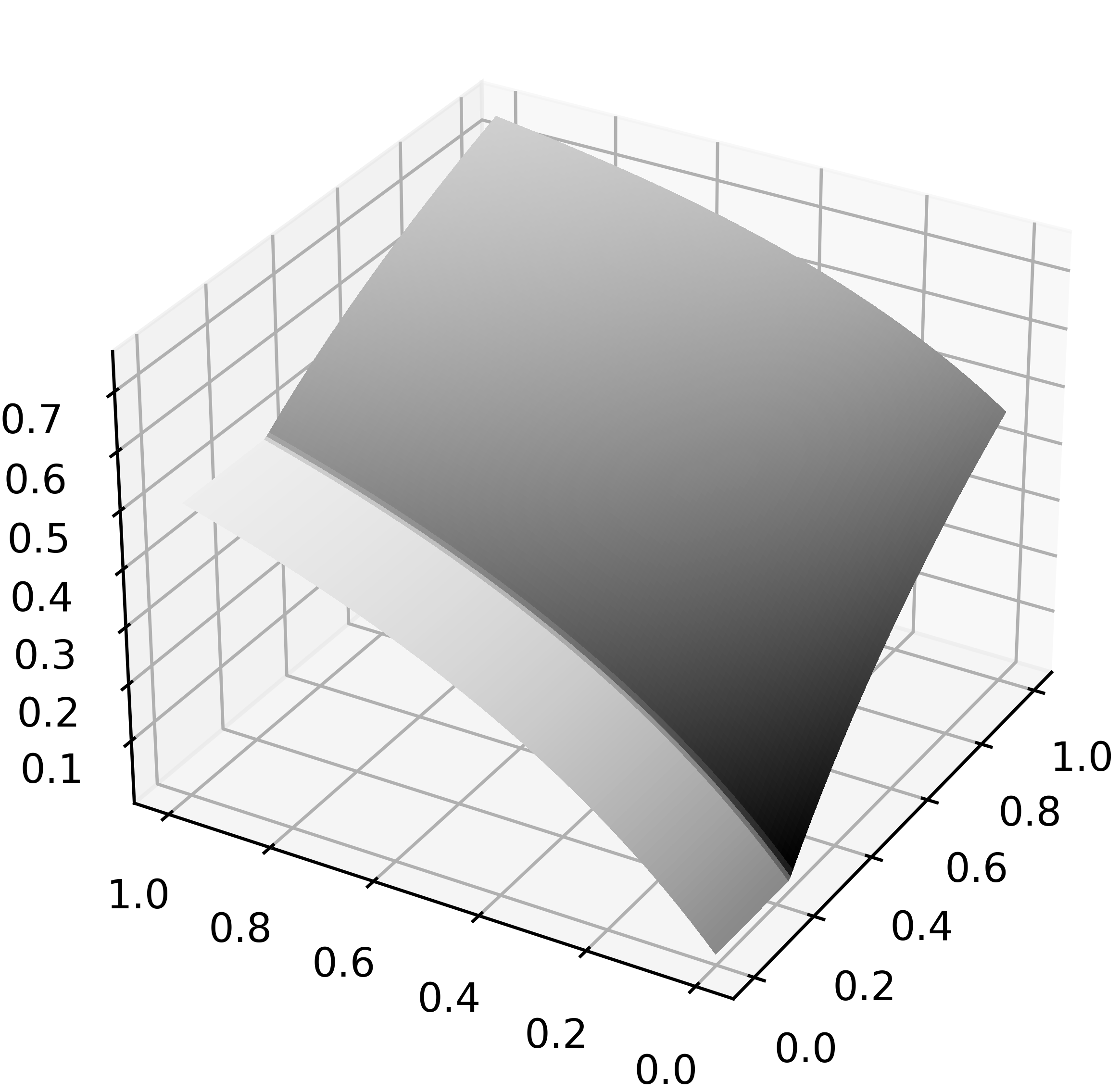}};
\node[anchor=south west] at (15.2,0.)
{\includegraphics[width=0.2\linewidth]{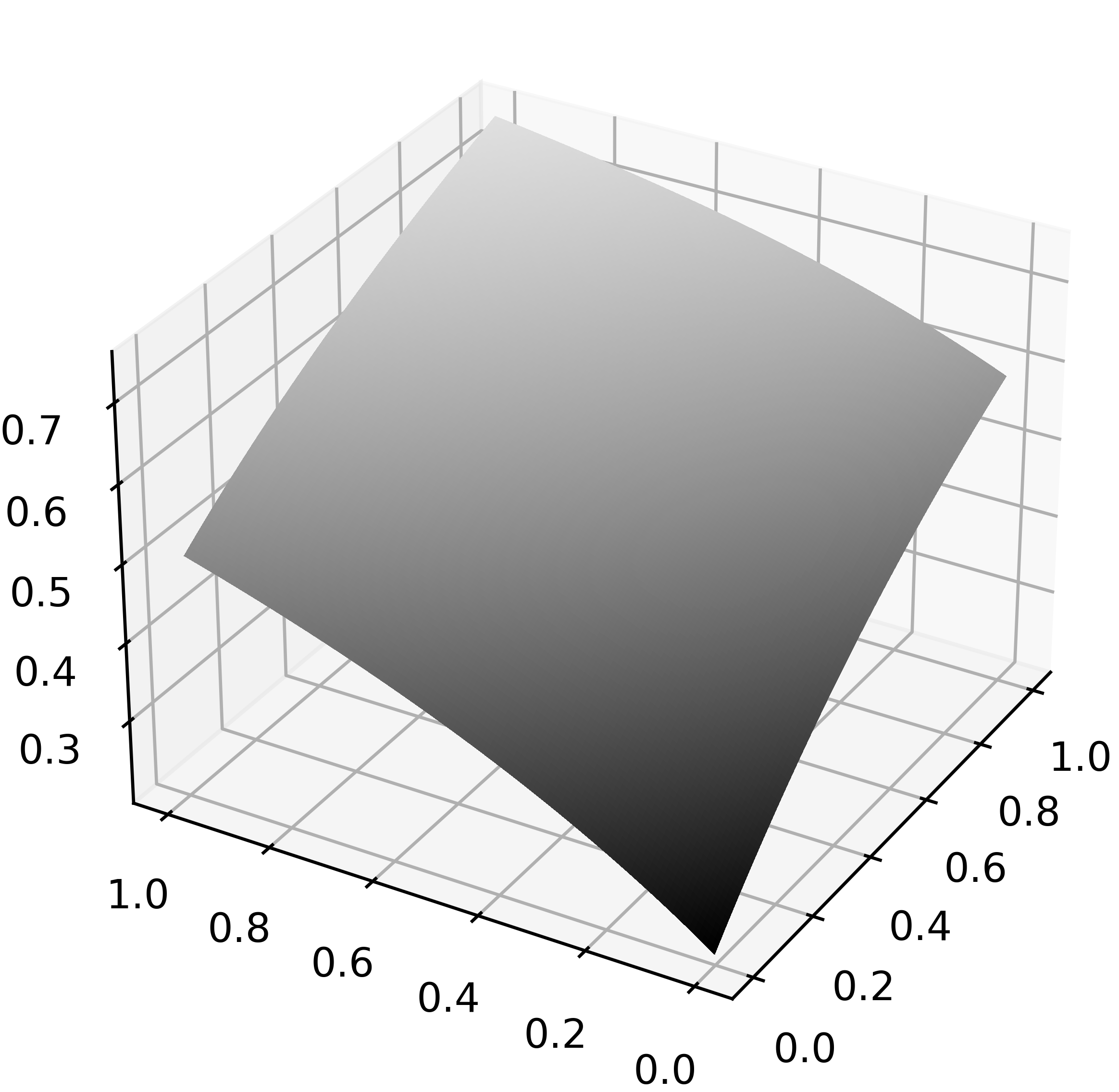}};
\node at (1.0,0.35) {\scriptsize{$q_1$}};
\node at (3.2,0.55) {\scriptsize{$q_2$}};
\node at (4.8,0.35) {\scriptsize{$q_1$}};
\node at (7.0,0.55) {\scriptsize{$q_2$}};
\node at (8.6,0.35) {\scriptsize{$q_1$}};
\node at (10.8,0.55) {\scriptsize{$q_2$}};
\node at (12.4,0.35) {\scriptsize{$q_1$}};
\node at (14.6,0.55) {\scriptsize{$q_2$}};
\node at (16.2,0.35) {\scriptsize{$q_1$}};
\node at (18.4,0.55) {\scriptsize{$q_2$}};
\end{scope}

\begin{scope}[shift={(0,-3.62)}]
\node[anchor=south west] at (0.,0.) 
{\includegraphics[width=0.2\linewidth]{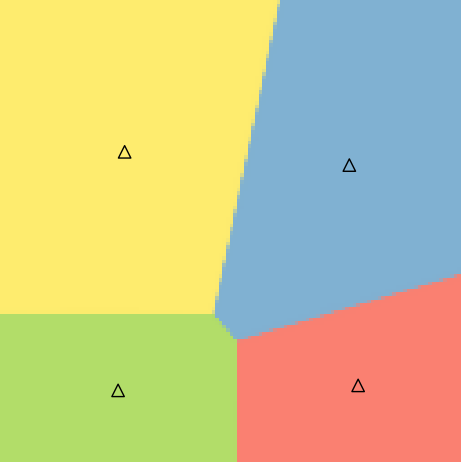}};
\node[anchor=south west] at (3.8,0.) 
{\includegraphics[width=0.2\linewidth]{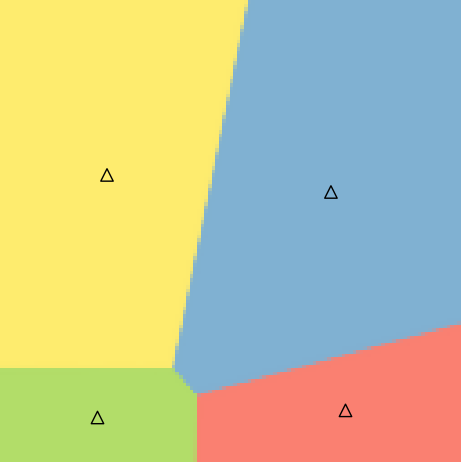}};
\node[anchor=south west] at (7.6,0.)
{\includegraphics[width=0.2\linewidth]{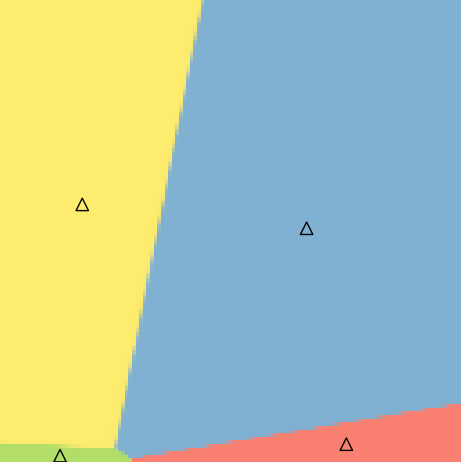}};
\node[anchor=south west] at (11.4, 0.)
{\includegraphics[width=0.2\linewidth]{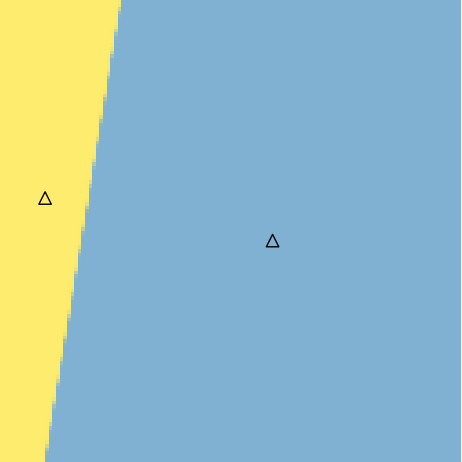}};
\node[anchor=south west] at (15.2,0.)
{\includegraphics[width=0.2\linewidth]{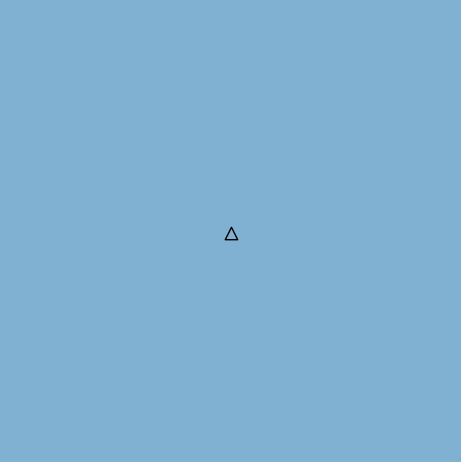}};
\node at (-0.1,1.8) {\small{$q_1$}};
\node at (1.9,-0.1) {\small{$q_2$}};
\node at (5.7,-0.1) {\small{$q_2$}};
\node at (9.5,-0.1) {\small{$q_2$}};
\node at (13.3,-0.1) {\small{$q_2$}};
\node at (17.1,-0.1) {\small{$q_2$}};
\end{scope}
\end{scope}
\end{tikzpicture}
}
\caption{ Optimal configurations for the monopolist's problem with unit demand, prices $p_1=1$ and $p_2=1.25$, upper quality bound $\overline{q}=2$ and, from left to right, lower quality bounds $\underline{q}=0.25,0.5,0.75,1,1.25$ (second row). The respective revenue function $R$ (defined above) is plotted in the first row.}
\label{fig:infodisc_unit_demand_qminus}
\end{figure}

\paragraph{Additive valuations.}
Figure~\ref{fig:infodisc_additive_valuations_sel} illustrates optimal information policies induced by Laguerre diagrams if the buyer has additive valuations. The optimal Laguerre diagrams become richer, with up to seven cells. Moreover, there is significant variation in the shape of the diagrams as the bundling surplus/discount varies. As Tables~\ref{tab:additive_valuations_neg_delta} and \ref{tab:additive_valuations_pos_delta} show, there is again a significant benefit to only partially revealing information compared to fully revealing the qualities.
\begin{table}[!h]
\begin{center}
\begin{tabular}{ c | c | c | c | c | c | c | c | c | c }                
$\Delta$ & -1 & -0.875 & -0.75 & -0.625 & -0.5 & -0.375 & -0.25 & -0.125 & 0  \\
\hline\hline
$\mathbf{R_{opt}}$ & $\mathbf{0.6577}$ & $\mathbf{0.5914}$ & $\mathbf{0.5215}$ & $\mathbf{0.4602}$ & $\mathbf{0.4196}$ & $\mathbf{0.3944}$ & $\mathbf{0.3754}$ & $\mathbf{0.3582}$ & $\mathbf{0.3432}$\\ \hline                     
$R(1, 1)$ & $0.5000$ & $0.4307$ & $0.3516$ & $0.2686$ & $0.1875$ & $0.1143$ & $0.0547$ & $0.0146$ & $0.0000$\\ \hline
$R_{Lloyd}$ & $0.6528$ & $0.5868$ & $0.4824$ & $0.3843$ & $0.3510$ & $0.3433$ & $0.3533$ & $0.3436$ & $0.3333$\\ \hline
$\mathbb{E}(R)$ & $0.6417$ & $0.5196$ & $0.4578$ & $0.4138$ & $0.3803$ & $0.3544$ & $0.3343$ & $0.3188$ & $0.3069$\\ \hline
$pp(\%)$ & $2.49$ & $13.82$ & $13.91$ & $11.21$ & $10.33$ & $11.29$ & $12.29$ & $12.36$ & $11.83$
\end{tabular}
\caption{Monopolist's problem with additive valuations for different bundling discounts $\Delta$ (first row), quality boundaries $\underline{q}=0$ and $\overline{q}=2$ and prices $p_1=p_2=1$, under selected information policies.}\label{tab:additive_valuations_neg_delta}
\end{center} \end{table}
\begin{table}[!h]
\begin{center}
\begin{tabular}{ c | c | c | c | c | c | c }                
$\Delta$ & 0.125 & 0.25 & 0.375 & 0.5 & 0.625 & 0.75 \\
\hline\hline
$\mathbf{R_{opt}}$ & $\mathbf{0.3303}$ & $\mathbf{0.3205}$ & $\mathbf{0.3154}$ & $\mathbf{0.3153}$ & $\mathbf{0.3153}$ & $\mathbf{0.3153}$ \\ \hline                     
$R(1, 1)$ & $0.0000$ & $0.0000$ & $0.0000$ & $0.0000$ & $0.0000$ & $0.0000$\\ \hline
$R_{Lloyd}$ & $0.3231$ & $0.3142$ & $0.3079$ & $0.3056$ & $0.3056$ & $0.3056$\\ \hline
$\mathbb{E}(R)$ & $0.2979$ & $0.2916$ & $0.2875$ & $0.2851$ & $0.2839$ & $0.2834$ \\ \hline
$pp (\%)$ & $10.88$ & $9.91$ & $9.70$ & $10.59$ & $11.06$ & $11.26$
\end{tabular}
\caption{Monopolist's problem with additive valuations for different bundling surcharges $\Delta$ (first row), quality boundaries $\underline{q}=0$ and $\overline{q}=2$ and prices $p_1=p_2=1$ under selected information policies.}\label{tab:additive_valuations_pos_delta}
\end{center} 
\end{table}
\begin{figure}[ht]
\resizebox{\linewidth}{!}{
\tikzstyle{frame} = [line width=1.8pt, draw=red,inner sep=0.01em]
\begin{tikzpicture}
\begin{scope}[scale=1.0]
\begin{scope}[shift={(0,-0.)}]
\node[anchor=south west] at (0.,0.) 
{\includegraphics[width=0.2\linewidth]{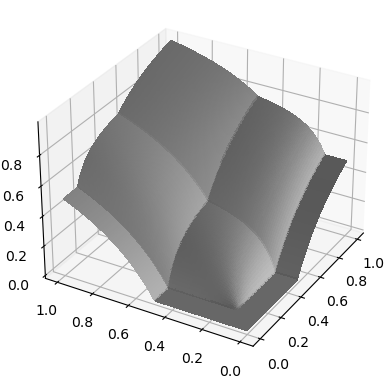}};
\node[anchor=south west] at (3.8,0.) 
{\includegraphics[width=0.2\linewidth]{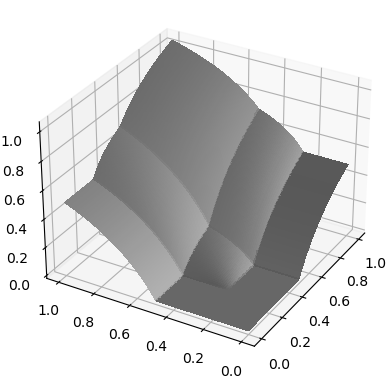}};
\node[anchor=south west] at (7.6,0.)
{\includegraphics[width=0.2\linewidth]{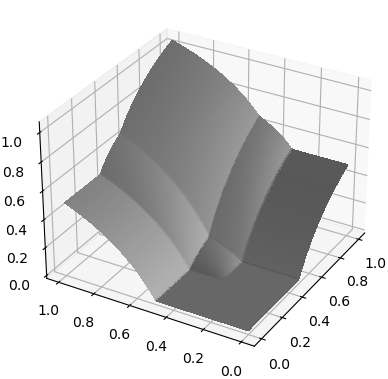}};
\node[anchor=south west] at (11.4, 0.)
{\includegraphics[width=0.2\linewidth]{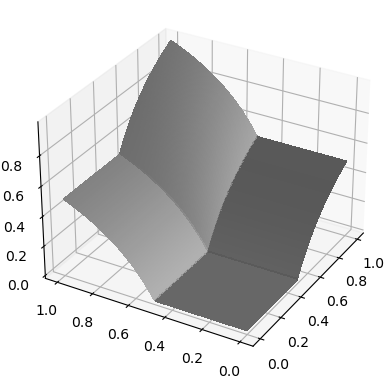}};
\node[anchor=south west] at (15.2,0.)
{\includegraphics[width=0.2\linewidth]{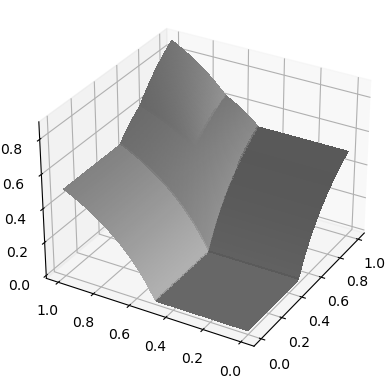}};
\node at (1.0,0.35) {\scriptsize{$q_1$}};
\node at (3.2,0.55) {\scriptsize{$q_2$}};
\node at (4.8,0.35) {\scriptsize{$q_1$}};
\node at (7.0,0.55) {\scriptsize{$q_2$}};
\node at (8.6,0.35) {\scriptsize{$q_1$}};
\node at (10.8,0.55) {\scriptsize{$q_2$}};
\node at (12.4,0.35) {\scriptsize{$q_1$}};
\node at (14.6,0.55) {\scriptsize{$q_2$}};
\node at (16.2,0.35) {\scriptsize{$q_1$}};
\node at (18.4,0.55) {\scriptsize{$q_2$}};
\end{scope}

\begin{scope}[shift={(0,-3.62)}]
\node[anchor=south west] at (0.,0.) 
{\includegraphics[width=0.2\linewidth]{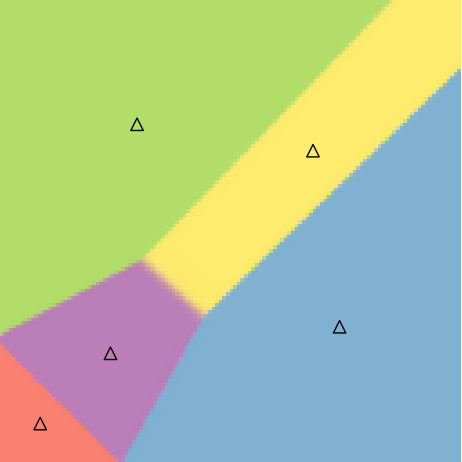}};
\node[anchor=south west] at (3.8,0.) 
{\includegraphics[width=0.2\linewidth]{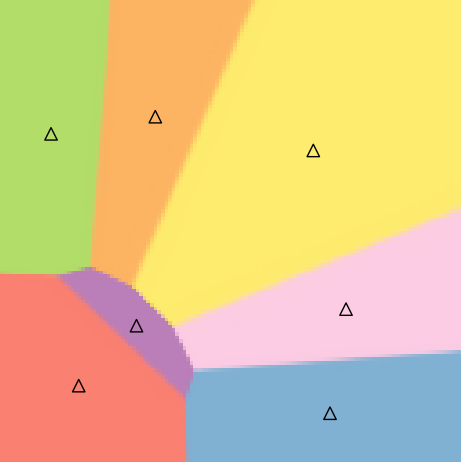}};
\node[anchor=south west] at (7.6,0.)
{\includegraphics[width=0.2\linewidth]{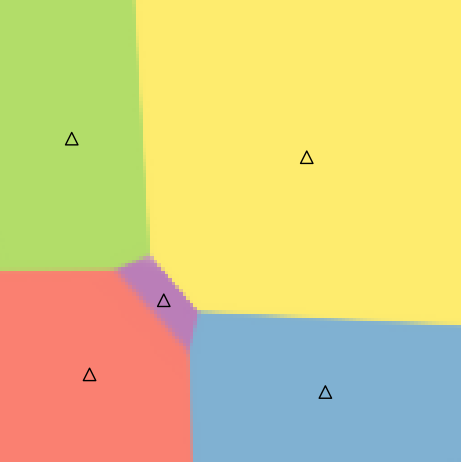}};
\node[anchor=south west] at (11.4, 0.)
{\includegraphics[width=0.2\linewidth]{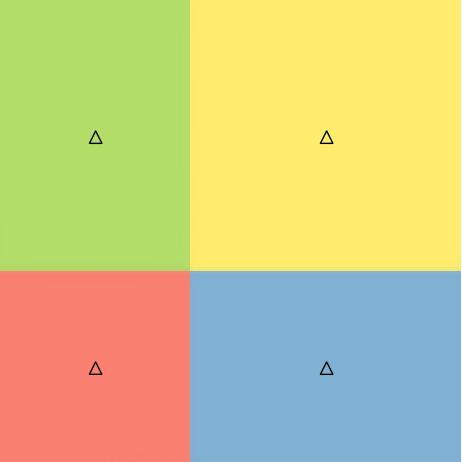}};
\node[anchor=south west] at (15.2,0.)
{\includegraphics[width=0.2\linewidth]{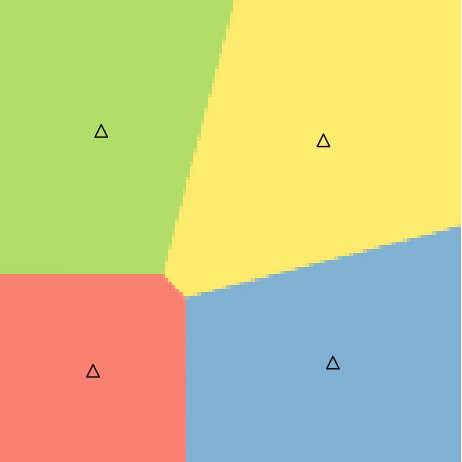}};
\node at (-0.1,1.8) {\small{$q_1$}};
\node at (1.9,-0.1) {\small{$q_2$}};
\node at (5.7,-0.1) {\small{$q_2$}};
\node at (9.5,-0.1) {\small{$q_2$}};
\node at (13.3,-0.1) {\small{$q_2$}};
\node at (17.1,-0.1) {\small{$q_2$}};
\end{scope}
\end{scope}
\end{tikzpicture}
}
\caption{Optimal configurations for the monopolist's problem with additive valuations, with prices $p_1=p_2=1$, quality bounds $\underline{q}=0$ and $\overline{q}=2$ and, from left to right, bundling surcharges/discounts $\Delta=-0.75, -0.5, -0.375, 0, 0.375$ (second row). The respective revenue function $R$ (defined above) is plotted in the first row.}
\label{fig:infodisc_additive_valuations_sel}
\end{figure}

\begin{figure}
\begin{center}
\resizebox{0.85\linewidth}{!}{
\tikzstyle{frame} = [line width=1.8pt, draw=red,inner sep=0.01em]
\begin{tikzpicture}
\begin{scope}[scale=1.0]

\begin{scope}[shift={(0,-11.02)}]
\node[anchor=south west] at (0.,0.) 
{\includegraphics[width=0.3\linewidth]{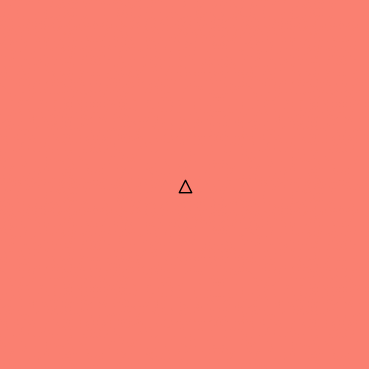}};
\node[anchor=south west] at (5.3,0.) 
{\includegraphics[width=0.3\linewidth]{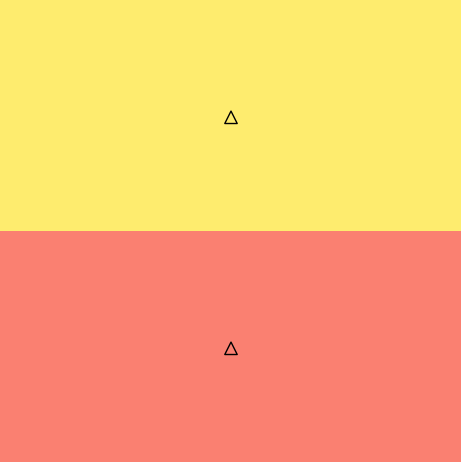}};
\node[anchor=south west] at (10.6,0.) 
{\includegraphics[width=0.3\linewidth]{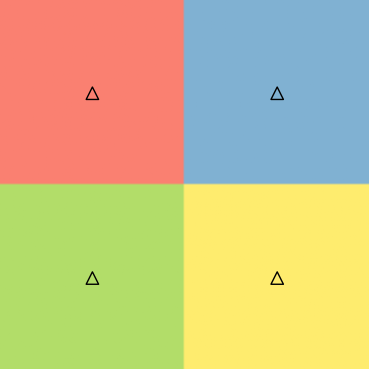}};
\node[anchor=south west] at (15.9,0.) 
{\includegraphics[width=0.3\linewidth]{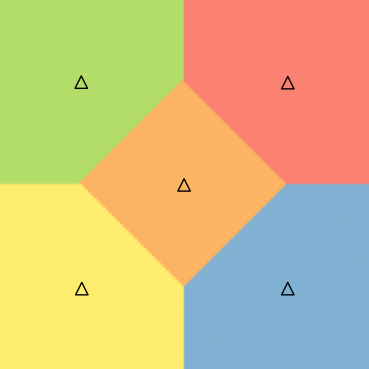}};
\node[anchor=south west] at (21.2,0.) 
{\includegraphics[width=0.3\linewidth]{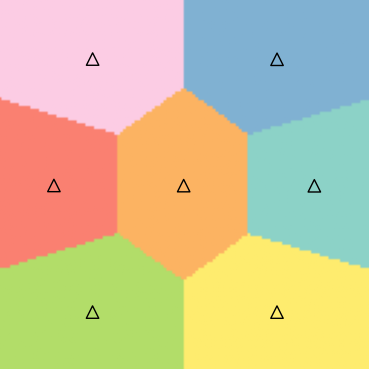}};
\end{scope}
\end{scope}
\end{tikzpicture}
}
\end{center}
\caption{Plot of the Lloyd's centroidal power diagrams for $n=1, 2, 4, 5, 7$ cells from left to right. The sites coincide with the barycenters of the cells.}
\label{fig:RevenuePlot2}
\end{figure} 

\subsubsection*{Acknowledgement} 
This work was supported by the 
German Research Foundation (DFG)
via Germany's Excellence Strategy project 390685813 
-- Hausdorff Center for Mathematics.
\bibliographystyle{abbrv}     
\bibliography{bibliography}

\end{document}